\newtheorem{Thm}{Theorem}[section]
\newtheorem{Prop}[Thm]{Proposition}
\newtheorem{Lem}[Thm]{Lemma}
\newtheorem{Cor}[Thm]{Corollary}
\newtheorem{Thmint}{Theorem}
\theoremstyle{definition}
\newtheorem{Rem}[Thm]{Remark}
\newtheorem{Def}[Thm]{Definition}
\newtheorem{Exm}[Thm]{Example}
\newtheorem{Exms}[Thm]{Examples}
\newtheorem{Ques}[Thm]{Question}
\newcommand{\Cs}{\mbox{${\rm C}^\ast$}}
\newcommand{\SL}{{\rm SL}}
\begin{document}
\title[Haagerup property and property (T) for \Cs -algebras]{Haagerup property for \Cs -algebras and rigidity of \Cs -algebras with property (T)}
\author{Yuhei Suzuki}
\date{}
\keywords{\Cs -algebras; Haagerup property; property (T); rigidity}
\address{Department of Mathematical Sciences,
University of Tokyo, Komaba, Tokyo, 153-8914, Japan}
\email{suzukiyu@ms.u-tokyo.ac.jp}
\begin{abstract}
We study the Haagerup property for ${\rm C}^\ast$-algebras.
We first give new examples of ${\rm C}^\ast$-algebras with the Haagerup property.
A nuclear ${\rm C}^\ast$-algebra with a faithful tracial state always has the Haagerup property,
and the permanence of the Haagerup property for ${\rm C}^\ast$-algebras is established.
As a consequence, the class of all ${\rm C}^\ast$-algebras with the Haagerup property turns out to be quite large.
We then apply Popa's results and show the ${\rm C}^\ast$-algebras with property~{\rm (T)} have a certain rigidity property.
Unlike the case of von Neumann algebras, for the reduced group ${\rm C}^\ast$-algebras of groups with relative property~{\rm (T)},
the rigidity property strongly fails in general. 
Nevertheless, for some groups without nontrivial property~{\rm (T)} subgroups, we show a rigidity property in some cases.
As examples, we prove the reduced group ${\rm C}^\ast$-algebras of the (non-amenable) affine groups of the affine planes have a rigidity property.
\end{abstract}
\maketitle
\tableofcontents

\section{Introduction}
The Haagerup property was first defined for groups, by Haagerup \cite{Haa}, as a weakened version of amenability.
This concept is generalized to one in the context of von Neumann algebras by Choda \cite{Cho} for distinguishing particular group von Neumann algebras.
After she introduced the definition, it has been studied by many authors, for example, in \cite{CJ}, \cite{Jol}, \cite{Pop}, and \cite{Rob}.
Recently, Dong introduced a notion of the Haagerup property for a pair of a \Cs -algebra and its faithful tracial state,
by imitating the case of von Neumann algebras.

In this paper, we first give new examples of \Cs -algebras with the Haagerup property:
Every unital nuclear \Cs -algebra has the Haagerup property with respect to an arbitrary faithful tracial state.
At the same time, we also establish permanence properties of the Haagerup property.
As a consequence of these two results, we have many new examples of \Cs -algebras with the Haagerup property. Here we state these theorems.
Proofs are given in Section \ref{sec:Ex}.
\begin{Thmint}[Theorem \ref{Thm:nuc}]\label{Thmint:nuc}
Let $(A,\tau)$ be a pair of a unital nuclear \Cs -algebra and a faithful tracial state.
Then it has the Haagerup property.
\end{Thmint}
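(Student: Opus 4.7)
The plan is to transfer the Haagerup property from the GNS von Neumann algebra down to $A$ itself, with nuclearity providing both the requisite von Neumann structure and the means of the transfer.

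First, since $A$ is nuclear, $A^{**}$ is injective as a von Neumann algebra, hence so is its normal quotient $M := \pi_\tau(A)''$. Equipped with the normal extension $\hat\tau$ of $\tau$, $(M, \hat\tau)$ is an injective finite von Neumann algebra, and therefore hyperfinite by Connes' theorem. I can then select an increasing chain $F_n \subset M$ of unital finite-dimensional $*$-subalgebras whose union is SOT-dense, together with the $\hat\tau$-preserving conditional expectations $E_n \colon M \to F_n$; these are normal unital completely positive maps with $\|E_n(x) - x\|_2 \to 0$ for every $x \in M$.

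The core technical step is to produce, for each $n$, a $\tau$-preserving u.c.p.\ map $\iota_n \colon F_n \to A$ that approximates the inclusion $F_n \hookrightarrow M$ in $\|\cdot\|_2$ on a prescribed finite subset. Starting from a system of matrix units of $F_n$, Kaplansky density gives norm-bounded elements of $A$ that are 2-norm close to these units; from this data I would then build a linear map $F_n \to A$ and, via a small perturbation, realise it as completely positive, unital, and trace-preserving. These three constraints on a map with finite-dimensional domain form an affine subset that is stable under sufficiently small perturbations, so the construction is robust.

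Finally, setting $\phi_n := \iota_n \circ E_n|_A \colon A \to A$ yields a u.c.p., $\tau$-preserving map factoring through the finite-dimensional $F_n$. Trace-preservation combined with the Kadison--Schwarz inequality shows $\phi_n$ extends to a bounded operator on $L^2(A,\tau)$, and since the extension has finite-dimensional range it is finite-rank, in particular compact. A triangle-inequality estimate using the approximation properties of $E_n$ and $\iota_n$, together with an exhaustion over finite subsets of $A$ and $\varepsilon_n \downarrow 0$, gives $\|\phi_n(x) - x\|_2 \to 0$ for each $x \in A$, which is the required Haagerup property.

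The main obstacle is the perturbation argument for $\iota_n$: producing a single map that is simultaneously completely positive, unital, $\tau$-preserving, and 2-norm close to the inclusion $F_n \hookrightarrow M$ is delicate, and is where the hypotheses (nuclearity to get hyperfiniteness of $M$, and faithfulness of $\tau$ so that $A \hookrightarrow M$ is an embedding densely compatible with the 2-norm) really come into play.
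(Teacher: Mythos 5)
Your overall strategy is the same as the paper's: nuclearity gives injectivity of $M=\pi_\tau(A)''$, Connes' theorem gives trace-preserving conditional expectations onto finite-dimensional subalgebras converging to the identity in $\|\cdot\|_2$ (the paper's Lemma \ref{Lem:CE} does this for non-separable $M$ by running over finite subsets, since Connes' theorem needs separable predual --- your single increasing chain $F_n$ with SOT-dense union implicitly assumes separability, but this is easily repaired), and the remaining work is to replace the inclusions $F_n\hookrightarrow M$ by c.p.\ maps landing in $A$.

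The genuine gap is in your core technical step. You propose to pick, via Kaplansky density, elements of $A$ that are $2$-norm close to the individual matrix units of $F_n$, assemble them into a linear map $F_n\to A$, and then ``via a small perturbation, realise it as completely positive, unital, and trace-preserving,'' asserting that these constraints are stable under small perturbations. They are not: complete positivity is membership in a closed convex cone, and the Choi matrix of the inclusion $F_n\hookrightarrow M$ (a $\ast$-homomorphism) is a multiple of a projection, hence lies on the \emph{boundary} of the positive cone of $M_{d}(M)$; arbitrarily small perturbations of it leave the cone, so there is no general perturbation lemma that will make your assembled linear map c.p. The paper circumvents exactly this: it uses the bijection between ${\rm CP}(F_\alpha, A^{**})$ and positive elements of $A^{**}\otimes F_\alpha$ and applies Kaplansky density to the \emph{positive part}, so the approximants are positive Choi matrices with entries in $A$ from the outset and complete positivity is automatic; it then only corrects unitality and the trace condition by convex combinations and scaling, settling for c.c.p.\ trace-\emph{decreasing} maps (which suffice by Remark 2.3) rather than insisting on exactly u.c.p.\ trace-preserving ones at this stage. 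Your sketch can be repaired by making this substitution --- approximate the whole Choi matrix in $M_d(M)_+$ by positive elements of $M_d(A)$ rather than approximating matrix units one at a time --- but as written the step on which everything hinges does not go through.
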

\begin{Thmint}[Theorem \ref{Thm:per}]\label{Thmint:per}
Let ${(A_i,\tau_i)}_{i\in I}$ be a family of \Cs -algebras with the Haagerup property indexed by a set $I$.
Then the following hold.
\begin{enumerate}[\upshape (1)]
\item If I is countable, then the direct product $(\prod_{i\in I} A_i,\tau)$ has the Haagerup property for any tracial state $\tau$ of the form $\tau=\sum_{i\in I}c_i\tau_i$,
where $(c_i)_{i\in I}$ is a family of positive numbers whose sum is $1$.
More generally, any \Cs -subalgebra of $(\prod_{i\in I} A_i,\tau)$ which contains both $\bigoplus_{i\in I} A_i$ and $1$ has the Haagerup property with respect to the restriction tracial state.
\item The spatial tensor product $(\bigotimes_{i\in I}A_i,\bigotimes_{i\in I}\tau_i)$ has the Haagerup property.
\item The reduced free product $(A, \tau)=$ \lower0.25ex\hbox{\LARGE $\ast$}${}_{i\in I}(A_i,\tau_i)$ has the Haagerup property.
\end{enumerate}
\end{Thmint}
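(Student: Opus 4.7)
The plan is to unpack the definition as: $(A,\tau)$ has the Haagerup property when there is a net of completely positive maps $\phi^{(n)}: A \to A$ with $\tau \circ \phi^{(n)} \leq \tau$, pointwise $L^2$-convergent to the identity, and such that the induced operators $\hat\phi^{(n)}$ on the GNS space $L^2(A,\tau)$ are compact. In each of the three items, I will build such a net on the composite algebra from the given nets $(\phi_i^{(n)})_n$ on each $(A_i,\tau_i)$.

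For (1), let $B$ be a unital $C^\ast$-subalgebra of $\prod_i A_i$ containing $\bigoplus_i A_i$, with $\tau=\sum c_i\tau_i$ restricted. The GNS space $L^2(B,\tau)$ embeds isometrically into the orthogonal sum $\bigoplus_i c_i^{1/2}L^2(A_i,\tau_i)$. For each finite $F\subset I$ and each $n$ define
$$\Phi_{F,n}\bigl((a_i)_i\bigr) \;=\; \bigoplus_{i\in F}\phi_i^{(n)}(a_i)\,\oplus\,(0)_{i\notin F},$$
which lands in $\bigoplus_{i\in F}A_i\subseteq B$ because $B\supseteq\bigoplus_iA_i$. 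Complete positivity and $\tau$-decreasingness are immediate; compactness of the $L^2$-extension follows from it being a finite direct sum of compact operators; and pointwise $L^2$-convergence is obtained by first truncating an $x=(a_i)_i\in B$ to make the tail $\sum_{i\notin F}c_i\|a_i\|_2^2$ small, then invoking the approximation property in each of the finitely many remaining factors.

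For (2), when $I$ is finite I will take $\bigotimes_{i\in I}\phi_i^{(n)}$, which is c.p.\ by functoriality of the spatial tensor product, is $\tau$-decreasing factor-by-factor, extends to a tensor product of compacts on $\bigotimes_iL^2(A_i,\tau_i)$, and converges pointwise on the dense set of elementary tensors. For infinite $I$, I will kill the tail with the trace itself: set
$$\Phi_{F,n} \;=\; \Bigl(\bigotimes_{i\in F}\phi_i^{(n)}\Bigr)\otimes\Bigl(\bigotimes_{i\notin F}\tau_i(\cdot)\,1_{A_i}\Bigr).$$
This is c.p.\ and trace-decreasing, and because $a\mapsto\tau_i(a)1$ induces the rank-one projection onto $\mathbb C\widehat 1_{A_i}$ in $L^2(A_i,\tau_i)$, the $L^2$-extension is compact (a finite tensor of compacts and finite-rank projections). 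Pointwise $L^2$-convergence is checked on elementary tensors $\bigotimes_{i\in F_0}x_i$ by letting $F\supseteq F_0$ and $n\to\infty$, and then extended by density.

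Item (3) is the main obstacle. The plan is to invoke Boca's theorem, which extends unital, trace-preserving c.p.\ maps on each $(A_i,\tau_i)$ to a unital, trace-preserving c.p.\ map on the reduced free product. Since the Haagerup approximants are only sub-unital and sub-trace, I will first perform a standard unitization/convexification: replace $\phi_i^{(n)}$ by $(1-t_n)\phi_i^{(n)}+t_n\tau_i(\cdot)\,1$ followed by a renormalization that makes it unital and trace-preserving, with $t_n\downarrow 0$ slowly so the approximation property survives. This has the bonus effect that the restriction to $H_i^\circ:=L^2(A_i,\tau_i)\ominus\mathbb C\widehat 1_{A_i}$ has operator norm strictly less than $1$, while remaining compact. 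Now apply Boca to obtain a u.c.p.\ trace-preserving extension $\Phi^{(n)}$ on the free product. Using the reduced free product decomposition
$$L^2(A,\tau) \;=\; \mathbb C\widehat 1 \,\oplus\, \bigoplus_{k\geq 1}\,\bigoplus_{\substack{i_1,\dots,i_k\\ i_j\neq i_{j+1}}} H_{i_1}^\circ\otimes\cdots\otimes H_{i_k}^\circ,$$
the extension acts on a word as the corresponding tensor product of the restrictions to the $H_{i_j}^\circ$. The delicate point is compactness of $\Phi^{(n)}$ on $L^2(A,\tau)$: the strict contractivity from the convexification gives exponential decay in word length $k$, so truncating at a sufficiently large $N$ produces a finite sum of tensor products of compacts that approximate $\Phi^{(n)}$ in operator norm. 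Pointwise $L^2$-convergence to the identity is then verified on reduced words and extended by density. This length-truncation/contraction estimate mirrors the Jolissaint--Martin argument in the von Neumann setting and is the most technical step of the theorem.
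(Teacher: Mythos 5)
Your construction matches the paper's in all three items: truncate the index set and kill the tail (by $0$ or by the traces) for (1) and (2), and for (3) the Boca free product of trace-perturbed approximants, which are strict contractions on each $H_i^\circ:=L^2(A_i,\tau_i)\ominus\mathbb{C}\widehat{1}_{A_i}$, so that the word-length decomposition of $L^2(A,\tau)$ exhibits the extension as a norm-limit of compacts --- this is exactly the paper's argument. The one point to patch is in (3) when $I$ is infinite: after truncating at word length $N$ there remain infinitely many word spaces $H_{i_1}^\circ\otimes\cdots\otimes H_{i_k}^\circ$, each carrying an operator of norm comparable to $c^k$, so the truncation is \emph{not} a finite sum of compacts as you claim; you must first localize to finitely many free factors, either by taking $\phi_i^{(n)}=\tau_i(\cdot)1$ outside a finite set (the same device you use in (1) and (2)), or, as the paper does, by reducing to the case $I=\{1,2\}$ via Lemma \ref{Lem:con} applied to the net of sub-free-products, onto which the trace-preserving conditional expectations exist.
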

The second theorem can be shown by the same proof as Jolissaint's one for von Neumann algebras \cite{Jol}.

In the second part of this paper, we give an application of the Haagerup property for \Cs -algebras.
Applying Popa's result, we have the following rigidity theorem.
\begin{Thmint}[Theorem \ref{Thm:T}]\label{Thmint:T}
Let $A$ be a \Cs -algebra which has a faithful tracial state with the Haagerup property.
Let $B$ be a \Cs -subalgebra of $A$ such that the pair $(A, B)$ has relative property~{\rm (T)}.
Then B must be residually finite dimensional.
\end{Thmint}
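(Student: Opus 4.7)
The plan is to produce a faithful $*$-homomorphism from $B$ into a countable direct product of matrix algebras $\prod_{k}M_{n_k}(\mathbb{C})$, from which residual finite-dimensionality follows immediately by composing with the coordinate projections. This is accomplished by exploiting the compactness built into the Haagerup property and the uniform control furnished by relative property~(T).

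First I would unfold the hypotheses. The Haagerup property of $(A,\tau)$ supplies a net of $\tau$-preserving unital completely positive maps $\phi_i:A\to A$ whose extensions $\hat\phi_i$ to $L^2(A,\tau)$ (well-defined and contractive by trace preservation and Kadison's inequality) are compact operators, and with $\|\phi_i(a)-a\|_2\to 0$ for every $a\in A$. Relative property~(T) of $(A,B)$ promotes this pointwise convergence to uniformity on $B_{\le 1}:=\{b\in B:\|b\|\le 1\}$: given $\epsilon>0$, for $i$ sufficiently large one has $\|\phi_i(b)-b\|_2<\epsilon$ for all $b\in B_{\le 1}$. For such a $\phi=\phi_i$, the image $\phi(B_{\le 1})\hat 1=\hat\phi(B_{\le 1}\hat 1)$ lies inside the relatively compact set $\hat\phi(\{\xi\in L^2(A,\tau):\|\xi\|_2\le 1\})$ and is within $L^2$-distance $\epsilon$ of $B_{\le 1}\hat 1$. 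Letting $\epsilon\to 0$ shows that $B_{\le 1}\hat 1$ is totally bounded, hence relatively compact, in $L^2(A,\tau)$.

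Set $N:=\pi_\tau(B)''\subset B(L^2(A,\tau))$, a finite von Neumann algebra on which $\hat 1$ is separating (inherited from $\pi_\tau(A)''$, since $\tau$ is faithful). Kaplansky density makes $B_{\le 1}\hat 1$ norm-dense in $N_{\le 1}\hat 1$, so the latter is again relatively compact. The heart of the argument is the structural lemma: \emph{a finite von Neumann algebra carrying a faithful tracial state whose operator-norm unit ball has relatively compact image in its $L^2$-space must be atomic, i.e.\ isomorphic to $\prod_k M_{n_k}(\mathbb{C})$ for some countable family of positive integers $(n_k)$}. The obstacle is any diffuse direct summand $N_0\subseteq N$: into $N_0$ one embeds $L^\infty([0,1])$ preserving the trace up to a positive scalar, and then the Rademacher projections in $L^\infty([0,1])$ yield a sequence $(q_n)$ in the unit ball of $N$ with $\|q_n-q_m\|_2^2$ bounded away from $0$, contradicting total boundedness.

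Given this lemma, the faithfulness of $\tau$ on $B$ makes $\pi_\tau|_B:B\hookrightarrow N\cong\prod_k M_{n_k}(\mathbb{C})$ injective, and the coordinate projections $N\to M_{n_k}(\mathbb{C})$ compose with this embedding to give a separating family of finite-dimensional $*$-representations of $B$. Thus $B$ is residually finite dimensional. The main technical hurdle is the atomicity lemma for finite von Neumann algebras; once it is in place, the theorem is a direct assembly of the Haagerup property (supplying $L^2$-compact deformations of the identity) with relative property~(T) (making those deformations uniform on the operator-norm unit ball of $B$).
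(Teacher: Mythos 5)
Your proposal is correct, and its overall architecture coincides with the paper's: pass to the GNS closure $\pi_\tau(B)''$, show it is atomic (a countable direct sum of matrix algebras), and read off residual finite dimensionality from the coordinate projections composed with the faithful representation $\pi_\tau|_B$. The difference is that the paper treats the von Neumann-algebraic core as a black box: it records Lemmas \ref{Lem:GNSH} and \ref{Lem:GNST} (Haagerup property and relative property~(T) both pass to GNS closures) and then invokes Popa's Theorem 5.4(1) together with the cutting argument of Remark \ref{Rem:Popa2} to conclude that $\pi_\tau(B)''$ has no diffuse summand. You instead re-derive that core: total boundedness of $B_{\leq 1}\hat 1$ from compact deformations that are uniformly close to the identity on the unit ball, Kaplansky density to upgrade to $N_{\leq 1}\hat 1$, and the Rademacher-projection obstruction to a diffuse summand. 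All of these steps are sound, and what your version buys is a self-contained proof not resting on \cite{Pop}. The one place you are terse is exactly the step the paper outsources: the claim that relative property~(T) in the Leung--Ng sense upgrades $\|\phi_i(a)-a\|_\tau\to 0$ to uniformity on $B_{\leq 1}$. This is not a formal consequence of the definition as stated; one must build from each $\tau$-preserving {\rm u.c.p.\null} map $\phi$ the Hilbert $A$-bimodule $H_\phi$ (the completion of $A\odot A$ under $\langle a\otimes b, c\otimes d\rangle=\tau\left(d^\ast\phi(c^\ast a)b\right)$) with distinguished unit vector $\xi_\phi=1\otimes 1$, check $\|a\xi_\phi-\xi_\phi a\|^2=2\,\mathrm{Re}\,\tau\left(a^\ast(a-\phi(a))\right)\to 0$, apply the definition to obtain a $B$-central $\xi_0$ near $\xi_\phi$, and then bound $\|\phi(b)-b\|_\tau\leq\|b\xi_\phi-\xi_\phi b\|\leq 2\|\xi_0-\xi_\phi\|$ for $b\in B_{\leq 1}$ via the Kadison--Schwarz inequality. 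That computation is precisely the content of Popa's Proposition 4.1, so your plan is complete once it is written out.
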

Applying this theorem with Leung-Ng's theorem \cite[Proposition 5.4]{LN}, we revisit the rigidity theorem of Robertson \cite[Theorem C]{Rob}.
(See also Remark \ref{Rem:Nem}.)
\begin{Thmint}[Corollary \ref{Cor:Nem}]
Let $\Gamma$ be a property~{\rm (T)} group, $A$ be a \Cs -algebra which has a faithful tracial state with the Haagerup property.
Then any unitary representation of $\Gamma$ on $A$ is weakly equivalent
to a direct sum of finite dimensional representations.
In particular, if $\Gamma$ is an infinite property~{\rm (T)} group,
then there is no nonzero $\ast$-homomorphism
from the reduced group \Cs -algebra ${\rm C}_r^*(\Gamma)$ into $A$.
\end{Thmint}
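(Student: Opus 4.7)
The plan is to combine Theorem~\ref{Thmint:T} with Leung--Ng's theorem and then, for the second assertion, to dispatch matters by Fell's absorption principle.

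Given a unitary representation $\pi\colon \Gamma \to U(A)$, let $B$ be the \Cs-subalgebra of $A$ generated by $\pi(\Gamma)$. First I would invoke \cite[Proposition~5.4]{LN}: since $\Gamma$ has property~(T), this forces the pair $(A, B)$ to have relative property~(T). Theorem~\ref{Thmint:T} then yields that $B$ is residually finite dimensional. Choose a separating family $\{\rho_j\colon B\to M_{n_j}(\mathbb{C})\}_j$ of finite dimensional representations. The composites $\sigma_j:=\rho_j\circ \pi$ are finite dimensional representations of $\Gamma$, and because $\bigoplus_j \rho_j$ is faithful on $B$, the representations $\pi$ and $\bigoplus_j \sigma_j$ induce the same kernel in the full group \Cs-algebra ${\rm C}^*(\Gamma)$. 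Hence they are weakly equivalent, proving the first assertion.

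For the ``in particular'' clause, suppose by way of contradiction that $\phi\colon {\rm C}_r^*(\Gamma)\to A$ is a nonzero $\ast$-homomorphism. Set $B:=\phi({\rm C}_r^*(\Gamma))$ and apply the first part to the unitary representation $g\mapsto \phi(\lambda_g)$, passing to the corner $\phi(1)A\phi(1)$ when $\phi$ is not unital. This shows $B$ is residually finite dimensional, so it admits a nonzero finite dimensional representation $\rho$; composition with $\phi$ yields a nonzero finite dimensional representation $\sigma$ of $\Gamma$ that is weakly contained in $\lambda_\Gamma$. By Fell's absorption principle, $\lambda_\Gamma \otimes \bar\sigma$ is unitarily equivalent to a multiple of $\lambda_\Gamma$, and therefore $\sigma\otimes \bar\sigma \prec \lambda_\Gamma$. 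Since the trivial representation is a subrepresentation of $\sigma\otimes \bar\sigma$ (via the trace vector), the trivial representation of $\Gamma$ is weakly contained in $\lambda_\Gamma$. This forces $\Gamma$ to be amenable, and combined with property~(T) it makes $\Gamma$ finite, contradicting the hypothesis.

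The main technical obstacle I anticipate is the bookkeeping when $\phi$ is not unital, so that Leung--Ng and Theorem~\ref{Thmint:T} still apply to the pair $(A, B)$ where $B$ sits in $A$ only via a corner, together with the verification that residual finite dimensionality of $B$ translates back to weak equivalence of $\pi$ with a direct sum of finite dimensional representations of $\Gamma$; both steps amount to carefully identifying kernels in ${\rm C}^*(\Gamma)$.
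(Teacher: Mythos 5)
Your argument is correct and follows essentially the same route as the paper: Leung--Ng gives relative property~(T) for the pair $(A, C^*(\pi(\Gamma)))$, Theorem~\ref{Thm:T} gives residual finite dimensionality, and comparing kernels in ${\rm C}^*(\Gamma)$ gives the weak equivalence. For the ``in particular'' clause the paper simply recalls that ${\rm C}_r^*(\Gamma)$ admits a finite dimensional representation only if $\Gamma$ is amenable; your Fell-absorption computation is just a written-out proof of that standard fact, and the non-unital bookkeeping you worry about is harmless since $B$ having property~(T) already yields relative property~(T) of $(A,B)$ without passing to a corner.
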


We also show this is not true for a general non-Haagerup group, even if the group has relative property~{\rm (T)} with respect to an infinite subgroup.
As an example, we give the following embeddings.
\begin{Thmint}[Theorem \ref{Thm:SL2}, Remark \ref{Rem:SL2}]\label{Thm:emb}
There are \Cs -algebras $A$, $B$ and $C$, each of which admits a faithful tracial state with the Haagerup property, having the following embeddings
\[{\rm C}_r^*(\mathbb{Z}^2\rtimes\SL _2({\mathbb{Z}})) \hookrightarrow A,\]
\[{\rm C}_r^*(H_3(\mathbb{Z})\rtimes\SL _2({\mathbb{Z}})) \hookrightarrow B,\]
\[{\rm C}_r^*(\mathbb{F}_p[t]^2\rtimes\SL _2(\mathbb{F}_p[t])) \hookrightarrow C.\]
\end{Thmint}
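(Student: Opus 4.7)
The plan is to exploit the fact that, although the three groups in the statement all contain an infinite subgroup with respect to which the ambient group has relative property~{\rm (T)}, the ``acting'' factor $\SL_2(\mathbb{Z})$ (respectively $\SL_2(\mathbb{F}_p[t])$) has a Bass--Serre decomposition in which every vertex and edge stabilizer is finite (respectively amenable). Pushing the semidirect product structure through this decomposition and then invoking Theorems \ref{Thmint:nuc} and \ref{Thmint:per}(3) should yield the required Haagerup envelopes $A$, $B$, $C$.

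Concretely, for $\Gamma=\mathbb{Z}^2\rtimes\SL_2(\mathbb{Z})$ I would use the identification $\SL_2(\mathbb{Z})\cong \mathbb{Z}/4\ast_{\mathbb{Z}/2}\mathbb{Z}/6$ to write
\[
\Gamma\ \cong\ \bigl(\mathbb{Z}^2\rtimes\mathbb{Z}/4\bigr)\ \ast_{\mathbb{Z}^2\rtimes\mathbb{Z}/2}\ \bigl(\mathbb{Z}^2\rtimes\mathbb{Z}/6\bigr),
\]
an amalgamated free product of virtually abelian (hence amenable) groups over a virtually abelian subgroup. Correspondingly, $C_r^*(\Gamma)$ becomes a reduced amalgamated free product of two nuclear (hence, by Theorem \ref{Thmint:nuc}, Haagerup) \Cs-algebras over a common nuclear subalgebra. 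Since Theorem \ref{Thmint:per}(3) handles only free products over the scalars, the key move is to realize this amalgamated free product inside a genuine reduced free product. I would do this by first passing to a finite index free subgroup $F\le\SL_2(\mathbb{Z})$ (which exists since $\SL_2(\mathbb{Z})$ is virtually free): $\Gamma_0:=\mathbb{Z}^2\rtimes F$ is then presented as an iterated HNN extension of $\mathbb{Z}^2$ with stable letters corresponding to the generators of $F$, and one can build an explicit embedding of $C_r^*(\Gamma_0)=C(\mathbb{T}^2)\rtimes_r F$ into a reduced free product of copies of $C_r^*(\mathbb{Z}^2\rtimes\mathbb{Z})$ (one per generator), suitably amplified by $C(\mathbb{T}^2)$ to accommodate the common action. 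Each copy is nuclear (since $\mathbb{Z}^2\rtimes\mathbb{Z}$ is polycyclic), so the free product is Haagerup by Theorems \ref{Thmint:nuc} and \ref{Thmint:per}(3); call this algebra $A_0$. The embedding is then extended to the whole of $\Gamma$ via the standard inclusion $C_r^*(\Gamma)\hookrightarrow M_n\bigl(C_r^*(\Gamma_0)\bigr)$ with $n=[\Gamma:\Gamma_0]$, and $A:=M_n(A_0)$ still has the Haagerup property since the $M_n$-amplification does not affect it.

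The Heisenberg case $H_3(\mathbb{Z})\rtimes\SL_2(\mathbb{Z})$ follows the same template because $C^*(H_3(\mathbb{Z}))$ is nuclear (as $H_3(\mathbb{Z})$ is nilpotent) and $H_3(\mathbb{Z})\rtimes F$ is again an iterated HNN extension over $H_3(\mathbb{Z})$. For the function field variant, Nagao's theorem gives $\SL_2(\mathbb{F}_p[t])\cong \SL_2(\mathbb{F}_p)\ast_{B(\mathbb{F}_p)}B(\mathbb{F}_p[t])$ with a solvable amalgamating subgroup, and the same distribution/embedding scheme applies verbatim, producing $B$ and $C$.

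The main obstacle is the embedding step: replacing an amalgamated free product over an infinite dimensional C*-algebra (here $C(\mathbb{T}^2)$ or $C^*(H_3(\mathbb{Z}))$) by an honest reduced free product in such a way that Theorem \ref{Thmint:per}(3) can be applied. Permanence of the Haagerup property under reduced \emph{amalgamated} free products over infinite dimensional subalgebras is not proved in this paper and, as the relative property~{\rm (T)} of the pair $(\mathbb{Z}^2\rtimes\SL_2(\mathbb{Z}),\mathbb{Z}^2)$ shows, it cannot hold in the naive sense. Hence the argument has to be by a hands-on construction of the embedding at the level of reduced Fell bundles (or, equivalently, from the iterated HNN presentation above), so that the amalgamation is absorbed by a conditional expectation rather than appearing as an amalgamated free product in the target algebra.
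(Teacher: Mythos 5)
There is a genuine gap, and in fact the route you sketch runs into an obstruction that the paper itself makes precise. Your plan hinges on embedding $C(\mathbb{T}^2)\rtimes_r F$ into a reduced free product of copies of ${\rm C}_r^*(\mathbb{Z}^2\rtimes\mathbb{Z})$ taken with (some amplification of) their canonical traces, so that Theorems \ref{Thmint:nuc} and \ref{Thmint:per}(3) apply. But if the resulting algebra $A_0$ carried a faithful tracial state $\tau$ with the Haagerup property whose restriction to the copy of $C(\mathbb{T}^2)$ is the canonical (Lebesgue) trace, then $\pi_\tau(C(\mathbb{T}^2))''\cong L^\infty(\mathbb{T}^2)$ would be diffuse; on the other hand, relative property~(T) of $\bigl(\mathbb{Z}^2\rtimes F,\mathbb{Z}^2\bigr)$ passes to the pair $\bigl(A_0, C(\mathbb{T}^2)\bigr)$ (enlarging the ambient algebra preserves relative property~(T) of a pair, as used in the proof of Theorem \ref{Thm:Aff}), so Lemmas \ref{Lem:GNSH}, \ref{Lem:GNST} and Popa's Theorem \ref{Thm:Popa} with Remark \ref{Rem:Popa2} force $\pi_\tau(C(\mathbb{T}^2))''$ to be completely atomic. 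So any construction in which $C(\mathbb{T}^2)$ sits inside a free factor with its natural trace is provably impossible; the Haagerup trace on the target \emph{must} restrict to a purely atomic measure on $\mathbb{T}^2$. You flag the amalgamation/HNN step as ``the main obstacle'' and defer it to an unspecified Fell-bundle construction, but that step is the entire content of the theorem, and the specific free-product target you propose cannot carry it out. (The reductions you do carry out --- $\SL_2(\mathbb{Z})\cong\mathbb{Z}/4\ast_{\mathbb{Z}/2}\mathbb{Z}/6$, passage to a finite-index free subgroup, and ${\rm C}_r^*(\Gamma)\hookrightarrow M_n({\rm C}_r^*(\Gamma_0))$ --- are fine but do not touch the difficulty.)

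The paper's argument is entirely different and much shorter: it changes the trace on the normal subgroup by exploiting equivariant residual finiteness rather than the Bass--Serre structure of $\SL_2$. The reduction maps $\mathbb{Z}^2\to(\mathbb{Z}/n\mathbb{Z})^2$ (resp.\ $H_3(\mathbb{Z})\to H_3(\mathbb{Z}/n\mathbb{Z})$, $\mathbb{F}_p[t]^2\to(\mathbb{F}_p[t]/t^n)^2$) are $\SL_2$-equivariant, finite-dimensional, and jointly separate points, so one gets an embedding of ${\rm C}_r^*(N\rtimes G)$ into $\prod_n\bigl((N/N_n)\rtimes_r G\bigr)$ with $G=\SL_2(\mathbb{Z})$ or $\SL_2(\mathbb{F}_p[t])$ acting on finite-dimensional algebras; each factor lies in $\mathcal{H}$ by Theorem \ref{Thm:Don}(3) since $G$ has the Haagerup property, and the countable product is handled by Theorem \ref{Thm:per}(1) (this is Lemma \ref{Lem:SL2}). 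Note that this automatically produces a trace that is atomic on the $\mathbb{Z}^2$-part, exactly as the Popa obstruction demands. If you want to salvage your approach, you would have to build the atomicity in from the start, at which point you are essentially reconstructing the paper's finite-quotient argument.
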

Note that all the pairs $\left(\mathbb{Z}^2\rtimes\SL _2({\mathbb{Z}}), \mathbb{Z}^2\right)$,
$\left(H_3(\mathbb{Z})\rtimes\SL _2({\mathbb{Z}}), H_3(\mathbb{Z})\right)$, and\\
$\left(\mathbb{F}_p[t]^2\rtimes\SL _2(\mathbb{F}_p[t]), \mathbb{F}_p[t]^2\right)$ have relative property~{\rm (T)}.
Hence Theorem \ref{Thm:emb} shows the rigidity theorem strongly fails for a general non-Haagerup group, even if it has relative property~{\rm (T)} with respect to an infinite subgroup.

However, we show a rigidity property for a group without infinite property~{\rm (T)} subgroups in some cases.
As an example, we show the following rigidity property of the affine groups of the affine planes.

\begin{Thmint}[Theorem \ref{Thm:Aff}]\label{Thmint:Aff}
Let $\mathbb{K}$ be a field which is not an algebraic extension of a finite field.
Then the reduced group \Cs -algebra ${\rm C}_r^*(\mathbb{K}^2\rtimes\SL _2({\mathbb{K}}))$
cannot embed into any \Cs -algebra which has a faithful tracial state with the Haagerup property.
\end{Thmint}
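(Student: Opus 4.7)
The plan is to argue by contradiction, converting a hypothetical embedding into a von Neumann algebraic setting where Popa's rigidity and the relative property~(T) of the group pair clash with the Haagerup property. Write $\Gamma := \mathbb{K}^2 \rtimes \SL_2(\mathbb{K})$ and $N := \mathbb{K}^2$, and suppose for contradiction there were an injective $*$-homomorphism $\phi \colon {\rm C}_r^*(\Gamma) \hookrightarrow A$ into a \Cs-algebra $A$ with a faithful Haagerup tracial state $\tau$. Set $\sigma := \tau \circ \phi$, a faithful tracial state on ${\rm C}_r^*(\Gamma)$, and let $\mu$ denote the Borel probability measure on $\widehat N$ corresponding by Pontryagin duality to $\sigma|_{{\rm C}^*(N)}$; since $\sigma$ is tracial and $\SL_2(\mathbb{K})$ acts on $N$ by conjugation in $\Gamma$, $\mu$ is $\SL_2(\mathbb{K})$-invariant.

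Now pass to the GNS representation of $\tau$: the finite von Neumann algebra $\mathcal{M} := \pi_\tau(A)''$ inherits the Haagerup property from $(A, \tau)$. Let $\mathcal{M}_0 \subset \mathcal M$ be the von Neumann subalgebra generated by $\pi_\tau(\phi({\rm C}_r^*(\Gamma)))$, and $\mathcal{B} \subset \mathcal{M}_0$ the subalgebra generated by $\pi_\tau(\phi({\rm C}^*(N)))$, so that $\mathcal{M}_0$ inherits Haagerup via the trace-preserving conditional expectation $\mathcal{M} \to \mathcal{M}_0$ and $\mathcal{B} \cong L^\infty(\widehat N, \mu)$. Since the group pair $(\Gamma, N)$ has relative property~(T) (classical for $\mathbb{K}$ a field not algebraic over any $\mathbb{F}_p$), I would transfer it to the pair $(\mathcal{M}_0, \mathcal{B})$ by converting each $\mathcal{M}_0$-bimodule to a unitary $\Gamma$-representation via $g \mapsto L_g R_{g^{-1}}$: almost $\mathcal{M}_0$-central vectors become almost $\Gamma$-invariant, hence close to $N$-invariant, equivalently $\mathcal{B}$-central. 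Popa's rigidity theorem (underlying Theorem~\ref{Thmint:T}) then forces $\mathcal{B}$ to have property~(T) as a finite von Neumann algebra; being abelian, $\mathcal{B}$ is finite dimensional, so $\mu$ has finite support.

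To close, note that since $\mathbb{K}$ is infinite with $|\mathbb{K}| > 3$, $\mathrm{PSL}_2(\mathbb{K})$ is simple and infinite, so the only normal subgroups of $\SL_2(\mathbb{K})$ are $\{1\}$, its center, and itself --- none of proper finite index. Thus every finite $\SL_2(\mathbb{K})$-orbit on $\widehat N$ is a fixed point. A short multiplicative calculation using the transitivity of $\SL_2(\mathbb{K})$ on $\mathbb{K}^2 \setminus \{0\}$ shows that the only $\SL_2(\mathbb{K})$-fixed character is trivial, so $\mu = \delta_0$. Then $\sigma(u_v) = 1$ and $\sigma((u_v - 1)^*(u_v - 1)) = 0$ for every $v \in N$, contradicting the faithfulness of $\sigma$ when $v \neq 0$. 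The hard part will be the transfer to the von Neumann algebraic level: verifying both that $\mathcal{M}_0 \subset \mathcal{M}$ genuinely inherits the Haagerup property and that $\mathcal{M}_0$-bimodules carry enough structure for the relative property~(T) inherited from $(\Gamma, N)$ to apply. The subtlety is that $\mathcal{M}_0$ is an abstract tracial completion of ${\rm C}_r^*(\Gamma)$ depending on the a priori unknown $\sigma$ and need not coincide with $L\Gamma$, so one must approximate $\mathcal{M}_0$-almost-central vectors by vectors almost invariant under the $\Gamma$-representation via the density of $\mathbb{C}\Gamma$ in $\mathcal{M}_0$ in $\|\cdot\|_2$; once this is in place, the remaining deductions above follow directly.
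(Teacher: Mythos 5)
Your overall strategy -- push the hypothetical embedding to the GNS level, pin down the $\SL_2(\mathbb{K})$-invariant measure $\mu$ on $\widehat{N}$ coming from the restricted trace, and play relative property~(T) against the Haagerup property via Popa -- is exactly the strategy of the paper's proof. But there are two concrete problems in your chain of deductions. First, you take as input that the pair $\left(\mathbb{K}^2\rtimes\SL_2(\mathbb{K}),\,\mathbb{K}^2\right)$ has relative property~(T) and call this ``classical.'' The classical statements are for $\left(\mathbb{Z}^2\rtimes\SL_2(\mathbb{Z}),\mathbb{Z}^2\right)$ and $\left(\mathbb{F}_p[t]^2\rtimes\SL_2(\mathbb{F}_p[t]),\mathbb{F}_p[t]^2\right)$; relative property~(T) is \emph{not} in general inherited by increasing unions of the distinguished subgroup, and for an arbitrary (possibly uncountable) field $\mathbb{K}$ the pair you invoke is a genuinely stronger and nonobvious assertion. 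The paper sidesteps this entirely: it only uses relative property~(T) of $\bigl(\Gamma,\mathbb{Z}^2\bigr)$ (resp.\ $\bigl(\Gamma,\mathbb{F}_p[\pi]^2\bigr)$ for a transcendental $\pi$ in characteristic $p$), which follows from the classical pairs because a pointwise-convergent net of positive definite functions on $\Gamma$ restricts to one on $\mathbb{Z}^2\rtimes\SL_2(\mathbb{Z})$. You should do the same: apply Popa to the subalgebra generated by $\pi_\tau(\phi(\mathrm{C}^*_r(\mathbb{Z}^2)))$, not by all of $\mathrm{C}^*_r(\mathbb{K}^2)$.

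Second, your appeal to Popa is misquoted: the theorem (Theorem~\ref{Thm:Popa} with Remark~\ref{Rem:Popa2}) concludes that $\mathcal{B}$ has \emph{no diffuse direct summand}, i.e.\ is atomic; it does not conclude that $\mathcal{B}$ has property~(T), and ``abelian $\Rightarrow$ finite dimensional'' fails in any case ($\ell^\infty(\mathbb{N})$ is atomic, abelian and infinite dimensional). Your endgame survives with ``purely atomic'' in place of ``finite support'' (atoms of an invariant measure have finite orbits, finite orbits of $\SL_2(\mathbb{K})$ are fixed points since $\SL_2(\mathbb{K})$ has no proper finite-index subgroups, and the only fixed character is trivial, contradicting faithfulness), but as written the step is wrong. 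Note that the paper's route is shorter here too: transitivity of $\SL_2(\mathbb{K})$ on $\mathbb{K}^2\setminus\{0\}$ forces the restricted trace to equal $c\chi_{\{0\}}+(1-c)\chi_{\mathbb{K}^2}$ with $c>0$ by faithfulness, so $\mu$ has a Haar component, $\pi_\tau(\mathrm{C}^*_r(\mathbb{Z}^2))''$ has the diffuse summand $L(\mathbb{Z}^2)$, and Popa's ``not diffuse'' conclusion is contradicted directly -- no analysis of atoms, orbits, or fixed characters is needed.
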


\noindent\underbar{\bf Underlying assumptions.}\\
In this paper, the following are always assumed.
\begin{itemize}
\item We always consider the topology of a group as the discrete one. 
\item We always assume a representation of a discrete group (on a Hilbert space or into an operator algebra) is a unitary one.
\item Positive definite functions on groups are always assumed to be normalized
(i.e., take value $1$ at the unit).
\end{itemize}

\noindent\underbar{\bf Notations.}\\
Here we fix notations which are used throughout in this paper.
\begin{itemize}
\item The symbol $\mathbb{M}_n$ means the matrix algebra of the size $n$ over $\mathbb{C}$.
\item For two \Cs -algebras $A$ and $B$, $A\otimes B$, $A\odot B$ mean the spatial tensor product, the algebraic tensor product, respectively.
\item For two \Cs -algebras $A$ and $B$, CP$(A,$~$B)$ means the set of all completely positive maps from $A$ to $B$.
\item For a \Cs -algebra $A$, $A^{\rm op}$ denotes the opposite algebra of $A$.
\item For a \Cs -algebra $A$ and a state $\varphi$, denote by $L^2(A, \varphi)$ the GNS-space of $(A, \varphi)$ and
denote by $\|\cdot\|_\varphi$ the norm on $L^2(A, \varphi)$.
\item For a discrete group $\Gamma$, $c_c(\Gamma)$ denotes the space of all complex valued functions on $\Gamma$ with finite supports.
\item For a discrete group $\Gamma$ and a positive definite function $\phi$ on $\Gamma$, $l_\phi^2(\Gamma)$
denotes the GNS-space of $\phi$ and $\langle\ ,\ \rangle_\phi$ denotes the inner product of $l_\phi^2(\Gamma)$.
If $\phi=\delta_e$, then as usual, we simply denote $l_{\delta_e}^2(\Gamma)$ by $l^2(\Gamma)$ and $\langle\ ,\ \rangle_{\delta_e}$ by $\langle\ ,\ \rangle_2$.
\item With above notations, for $g\in\Gamma$, the canonical image of $\delta_g$ in $l_\phi^2(\Gamma)$ is denoted by $\delta _g ^\phi$.
\item For a set $X$ and a subset $S\subset X$, we denote the characteristic function of $S$ on $X$
by $\chi_S$.
\item The finite field with order $p$ is denoted by $\mathbb{F}_p$.
\item The terms {\rm u.c.p.\null}, {\rm c.c.p.\null}, {\rm c.p.\null}
are the abbreviations of ``Unital Completely Positive'', ``Contractive Completely Positive'', ``Completely Positive'', respectively.
\end{itemize}

\section{Preliminaries}

Recall that a discrete group $\Gamma$ is said to have the Haagerup property (also known as Gromov's a-T-menability)
if there is a net $(\phi_n)_n$ of positive definite functions each of which vanishes at infinity and the net converges to $1$ pointwise.
It is well-known if we replace the condition ``vanish at infinity'' by ``have a finite support'', then this is equivalent to amenability (see \cite[Theorem 2.6.8]{BO}).
In this sense, the Haagerup property is considered as a weak version of amenability.
Amenability of groups is quite useful but a strong condition, so many important groups fail to have amenability.
On the other hand, the Haagerup property, a weak version of amenability, is satisfied by many important non-amenable groups, for example, the free groups, the Coxeter groups, and so on.
Moreover, in many applications, the Haagerup property is sufficiently useful.
For example, for the groups with the Haagerup property, the Baum-Connes conjecture is true,
and consequently many important conjectures (e.g., the Novikov conjecture, the Kaplansky conjecture, etc.) are also true.
Moreover, the groups with the Haagerup property do not have (relative) property~{\rm (T)}, which is a rigidity property of discrete groups.
In many situations, a group with (relative) property~{\rm (T)} is essentially hard to study,
so at least for the groups with the Haagerup property, essential difficulties would not arise.
For these reasons, it is interesting to study the Haagerup property.

As in the case of amenability, the Haagerup property also has many characterizations, but the above form is the most suitable formulation for extending it to the setting of operator algebras.
For more information about the Haagerup property for discrete (or more generally, for locally compact) groups, we refer the reader to the book \cite{Che}.

Recently, Dong \cite{Don} gave a definition of the Haagerup property for a pair of a unital \Cs -algebra and its faithful tracial state as follows.
\begin{Def}\label{Def:Haa}
Let $A$ be a unital \Cs -algebra, $\tau$ a faithful tracial state on $A$.
The pair $(A, \tau)$ is said to have the {\it Haagerup property} if
there is a net $(\phi_i)_{i\in I}$ of {\rm u.c.p.\null} maps from $A$ to itself satisfying the following conditions.
\begin{enumerate}[\upshape (1)]
\item Each $\phi_i$ decreases $\tau$; i.e., for any positive element $a \in A$,
we have $\tau(\phi_i(a))\leq\tau(a)$.
\item For any $a \in A, \|\phi_i(a)-a\|_\tau$ converges to $0$ as $i$ tends to infinity.
\item Each $\phi_i$ is $L^2$-compact; i.e., from the first condition, $\phi_i$ extends to a bounded operator on its GNS-space $L^2(A,\tau)$,
which is compact.
\end{enumerate}
For brevity, we sometimes say $\tau$ is a tracial state with the Haagerup property.
\end{Def}
This is a straightforward generalization of the definition of the Haagerup property for von Neumann algebras, which has been introduced in \cite{Cho}.
\begin{Rem}
The definition of the Haagerup property for von Neumann algebras is the same as above
except for the additional assumption that the tracial state is normal.
With this definition, Jolissaint \cite{Jol} proves the Haagerup property of von Neumann algebras
does not depend on the choice of faithful normal tracial states.
However, as we prove in Section \ref{sec:App} (Theorem \ref{Thm:trd}), in the case of \Cs -algebras,
this is no longer true. 
\end{Rem}
\begin{Rem}
As in the case of von Neumann algebras \cite[Remark 12.1.17]{BO},
the condition of $\phi_i$ being {\rm u.c.p.\null} can be relaxed by $\phi_i$ being {\rm c.c.p.\null},
and we can take $\phi_i$ so that it preserves $\tau$.
This is done by replacing $(\phi_i)_{i\in I}$ by the net
\[\psi_{i,n}(x):=c_n\phi_i(x)+\frac{(\tau-c_n\tau\circ\phi_i)(x)}{(1-c_n\tau\circ\phi_i)(1)}(1-c_n\phi_i(1))\]
on $I\times\mathbb{N}$, where $c_n:=1-1/n$.
\end{Rem}
Here we recall the fundamental properties of the Haagerup property for \Cs -algebras.
\begin{Thm}[Dong \cite{Don}]\label{Thm:Don}
The following hold.
\begin{enumerate}[\upshape (1)]
\item For a discrete group $\Gamma$, it has the Haagerup property if and only if
the reduced group \Cs -algebra ${\rm C}_r^*(\Gamma)$ of $\Gamma$ has the Haagerup property with respect to the canonical tracial state.
\item The Haagerup property for \Cs -algebras is closed under taking the reduced crossed product by a trace-preserving action of an amenable group.
\item For any finite dimensional \Cs -algebra $A$ and a group $\Gamma$ with the Haagerup property acting on $A$,
the reduced crossed product of $A$ by $\Gamma$ has the Haagerup property with respect to the canonical extension of any $\Gamma$-invariant faithful tracial state on $A$, which always exists.
\end{enumerate}
\end{Thm}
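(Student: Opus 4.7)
The plan is to handle the three parts of Dong's theorem separately, each via a Herz--Schur multiplier construction linking positive definite functions on groups to u.c.p.\ maps on the relevant algebras.

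For Part~(1), the forward direction is direct: given positive definite $\phi_n:\Gamma\to\mathbb{C}$ vanishing at infinity with $\phi_n\to 1$ pointwise, I would form the Herz--Schur multiplier $m_{\phi_n}:{\rm C}_r^*(\Gamma)\to {\rm C}_r^*(\Gamma)$ sending $\lambda_g\mapsto\phi_n(g)\lambda_g$. This map is u.c.p.\ (via the Naimark dilation of $\phi_n$), trace-preserving, and extends on $l^2(\Gamma)$ to diagonal multiplication by $\phi_n$, which is compact precisely because $\phi_n$ vanishes at infinity. Pointwise convergence $\phi_n\to 1$ yields $L^2$-convergence of $m_{\phi_n}(a)$ to $a$ on the dense span $\mathbb{C}[\Gamma]$, extending by norm estimates to all of ${\rm C}_r^*(\Gamma)$. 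For the converse, starting from u.c.p.\ trace-preserving maps $\phi_i$ realizing the Haagerup property, I would set $\psi_i(g):=\tau(\lambda_g^*\phi_i(\lambda_g))$. Pointwise convergence $\psi_i(g)\to 1$ follows from the Cauchy--Schwarz inequality applied to condition~(2), while vanishing at infinity uses $\psi_i(g)=\langle T_{\phi_i}\delta_g,\delta_g\rangle\to 0$, exploiting compactness of the $L^2$-extension $T_{\phi_i}$ together with the weak null sequence $\delta_g\to 0$ in $l^2(\Gamma)$. The delicate point is proving $\psi_i$ is positive definite, which can be done by a bimodular averaging argument going back to Jolissaint in the von Neumann setting.

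For Part~(3), finite-dimensionality of $A$ is essential. Given a Haagerup approximation $(\phi_n)$ of positive definite functions on $\Gamma$ vanishing at infinity with $\phi_n\to 1$ pointwise, the map $m_{\phi_n}:A\rtimes_r\Gamma\to A\rtimes_r\Gamma$ sending $a\lambda_g\mapsto\phi_n(g)\,a\lambda_g$ is u.c.p.\ via a Stinespring construction built from the Naimark dilation of $\phi_n$, and it preserves the canonical extension of $\tau_0$. The GNS Hilbert space factors as $L^2(A,\tau_0)\otimes l^2(\Gamma)$, on which $m_{\phi_n}$ acts as ${\rm id}_{L^2(A,\tau_0)}\otimes M_{\phi_n}$; finite-dimensionality of $L^2(A,\tau_0)$ combined with compactness of $M_{\phi_n}$ yields compactness of the tensor operator. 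Existence of a $\Gamma$-invariant faithful tracial state on the finite-dimensional $A$ follows because ${\rm Aut}(A)$ is a compact Lie group, so Haar-averaging any faithful trace against the closure of the image of $\Gamma$ in ${\rm Aut}(A)$ produces the required $\Gamma$-invariant faithful trace.

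Part~(2) is the main obstacle, since a u.c.p.\ map on $A$ does not in general extend to the reduced crossed product without $G$-equivariance. The key is to exploit amenability of $G$: it provides finitely supported normalized positive definite functions $\xi_n:G\to\mathbb{C}$ converging pointwise to~$1$, and the associated Herz--Schur multipliers $m_{\xi_n}:A\rtimes_r G\to A\rtimes_r G$, $a\lambda_g\mapsto\xi_n(g)\,a\lambda_g$, are u.c.p.\ trace-preserving with support in a finite subset $F_n\subset G$. I would combine these with the Haagerup maps $(\phi_i)$ on $A$ by Fourier-decomposing $x=\sum_g x_g\lambda_g$ (using the canonical conditional expectation $x_g=E(x\lambda_g^*)$) and defining $\Psi_{i,n}(x):=\sum_{g\in F_n}\xi_n(g)\,\phi_i(x_g)\,\lambda_g$. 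The technical heart of the argument is proving complete positivity of $\Psi_{i,n}$; I would do this by dilating $m_{\xi_n}$ to a $\ast$-representation of $A\rtimes_r G$ on a ``finite spectral slice'' and applying $\phi_i$ coordinatewise in a Stinespring-compatible fashion. Granting c.p., trace-preservation and $L^2$-approximation are routine, and $L^2$-compactness follows from finiteness of $F_n$ combined with compactness of each $\phi_i$ on $L^2(A,\tau)$.
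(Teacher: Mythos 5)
The paper itself does not prove this theorem: it is quoted from Dong's paper \cite{Don}, with only the remark that part (3) follows from his treatment of the relative Haagerup property. So your proposal is being measured against the standard arguments rather than against a proof printed here. With that caveat: your parts (1) and (3) are correct and are essentially the expected multiplier arguments. One small comment on (1): the positive definiteness of $\psi_i(g)=\tau(\lambda_g^*\phi_i(\lambda_g))$ is not really delicate and needs no averaging; writing $[\phi_i(\lambda_{g_j}^*\lambda_{g_k})]_{j,k}=Y^*Y$ by complete positivity and using the trace property, one gets $\sum_{j,k}\overline{c_j}c_k\,\psi_i(g_j^{-1}g_k)=\sum_l\tau(z_lz_l^*)\geq 0$ with $z_l=\sum_kc_ky_{lk}\lambda_{g_k}^*$.

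The genuine gap is in part (2). The map $\Psi_{i,n}(x)=\sum_{g\in F_n}\xi_n(g)\,\phi_i(x_g)\,\lambda_g$ is in general not completely positive --- it is not even $\ast$-preserving unless each $\phi_i$ commutes with the action, since $(a\lambda_g)^*=\sigma_{g^{-1}}(a^*)\lambda_{g^{-1}}$ forces $\sigma_g\circ\phi_i=\phi_i\circ\sigma_g$ for $\Psi_{i,n}(x^*)=\Psi_{i,n}(x)^*$. Concretely, take $A=\mathbb{C}^2$, $G=\mathbb{Z}/2\mathbb{Z}$ acting by the flip, $\xi_n\equiv 1$ (a legitimate finitely supported normalized positive definite function on an amenable group) and the unital $\ast$-homomorphism $\phi(x,y)=(x,x)$: then $A\rtimes_rG\cong\mathbb{M}_2$ and your $\Psi$ sends the positive matrix $\begin{pmatrix}a&c\\\overline{c}&b\end{pmatrix}$ to $\begin{pmatrix}a&c\\c&a\end{pmatrix}$, which is not even self-adjoint for non-real $c$; no Stinespring dilation can make a non-positive map positive. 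The repair is to conjugate $\phi_i$ by the action. For a F{\o}lner set $F$ put
\[\Phi_{i,F}(x):=\frac{1}{|F|}\sum_{g,h\in F}\lambda_g\,\phi_i\bigl(E(\lambda_g^*x\lambda_h)\bigr)\,\lambda_h^*,\]
where $E$ is the canonical conditional expectation onto $A$. This is manifestly completely positive, being the composition of the c.p.\ maps $x\mapsto[\lambda_g^*x\lambda_h]_{g,h}$, entrywise $E$, entrywise $\phi_i$, and compression by the row $\bigl(|F|^{-1/2}\lambda_g\bigr)_{g\in F}$; it is unital, and a direct computation gives
\[\Phi_{i,F}(a\lambda_s)=\frac{|F\cap sF|}{|F|}\cdot\Bigl(\frac{1}{|F\cap sF|}\sum_{g\in F\cap sF}\sigma_g\circ\phi_i\circ\sigma_g^{-1}\Bigr)(a)\,\lambda_s,\]
i.e.\ the F{\o}lner multiplier combined with a twisted average of $\phi_i$. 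It preserves $\tilde{\tau}$ because $\tau$ is $G$-invariant, it is $L^2$-compact because it is supported on the finite set $FF^{-1}$ and each twisted average is a finite convex combination of unitary conjugates of the compact operator $T_{\phi_i}$, and it converges to the identity in $\|\cdot\|_{\tilde{\tau}}$ along the net of pairs $(F,i)$ (choose $F$ sufficiently invariant first, then $i$ large enough that $\phi_i$ is close to the identity on the finite orbit $\{\sigma_g^{-1}(a)\}_{g\in F}$). Your formula is exactly this one with the conjugation by $\sigma_g$ omitted, and that omission is what breaks positivity.
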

Note that part (3) above is not mentioned in the paper of Dong,
but this immediately follows from his study of the relative Haagerup property \cite[Section 3]{Don}.

For later applications, we recall the definition of relative property~{\rm (T)}.
Relative property~{\rm (T)} is a rigidity property of groups, which is negated by the Haagerup property in the following sense:
A group which has relative property~{\rm (T)} with respect to an infinite subgroup does not have the Haagerup property.
For more about property~{\rm (T)}, we refer the reader to the book \cite{BHV} of Bekka, de la Harpe, and Valette.
\begin{Def}
Let $\Gamma$ be a group and let $\Lambda$ be a subgroup of $\Gamma$.
The pair $(\Gamma, \Lambda)$ is said to have {\it relative property~(T)} if
any net $(\phi_n)$ of positive definite functions on $\Gamma$ that converges to $1$ pointwise converges uniformly on $\Lambda$.
A group $\Gamma$ is said to have {\it property~(T)} if the pair $(\Gamma,\Gamma)$ has relative property~{\rm (T)}.
\end{Def}
Here we review examples of relative property~{\rm (T)} groups.
\begin{Exms}[\cite{BHV}]\ 
\begin{itemize}
\item For $n\geq 3$, $\SL _n({\mathbb{Z}})$ has property~{\rm (T)}.
\item The pair $\left(\mathbb{Z}^2\rtimes\SL _2({\mathbb{Z}}), \mathbb{Z}^2\right)$ has relative property~{\rm (T)}.
\item The pair $\left(H_3(\mathbb{Z})\rtimes\SL _2(\mathbb{Z}), H_3(\mathbb{Z})\right)$ has relative property~{\rm (T)}.
\item The pair $\left(\mathbb{F}_p[t]^2\rtimes\SL _2(\mathbb{F}_p[t]), \mathbb{F}_p[t]^2\right)$ has relative property~{\rm (T)}.
\item For $n\geq 2$, $Sp_{2n}({\mathbb{Z}})$ has property~{\rm (T)}.
\end{itemize}
\end{Exms}
Property~{\rm (T)} has been also defined for operator algebras.
For von Neumann algebras, it was first done by Connes-Jones \cite{CJ} for type {\rm I\hspace{-.1em}I$_1$} factors,
then it was extended to general finite von Neumann algebras by Popa \cite{Pop}.
For \Cs -algebras, property~{\rm (T)} was first introduced by Bekka \cite{Bek},
then its (formally) strengthened version, called strong property~{\rm (T)} was introduced by Leung-Ng \cite{LN}.
The notion of strong property~{\rm (T)} is closer to that of Popa's property~{\rm (T)} than Bekka's one, which is suitable for our application.
Moreover, most important examples of \Cs -algebras having Bekka's property~{\rm (T)}
also have strong property~{\rm (T)}. Actually, it is not known whether these two notions coincide or not \cite[page 3057]{LN}.
For these reasons, we only use Leung-Ng's notion and we simply call it property~{\rm (T)} instead of strong property~{\rm (T)}.
To recall these definitions we first need the notion of a Hilbert bimodule.
\begin{Def}
Let $A$ be a \Cs -algebra or a von Neumann algebra.
A Hilbert space $H$ is called a {\it Hilbert} $A$-{\it bimodule}
if it is equipped with the actions $\pi$ of $A$ and $\rho$ of $A^{\rm op}$
such that these two actions are mutually commuting.
When $A$ is a von Neumann algebra, then we further assume both actions are normal.
We refer to $\pi$, $\rho$ as the left, right action of $A$, respectively,
and use the notation
\[x\xi y:=\pi(x)\rho(y^{\rm op})\xi\]
for $x, y\in A$ and $\xi\in H$.
\end{Def}
\begin{Def}[Leung-Ng \cite{LN}]
Let $A$ be a \Cs -algebra, $B$ be a \Cs -subalgebra of $A$.
The pair $(A,B)$ is said to have {\it relative property~(T)} if for any $\epsilon >0$, there exist $\delta >0$ and a finite subset $Q\subset A$, such that the following holds:
For any Hilbert $A$-bimodule $H$ and a unit vector $\xi\in H$ satisfying
$\|x\xi-\xi x\|<\delta$ for all $x\in Q$, there exists a $B$-central unit vector $\xi _0$ (i.e., $x\xi_0=\xi_0 x$ holds for all $x\in B$) with $\|\xi _0-\xi\|<\epsilon$.
If the pair $(A,A)$ has relative property~{\rm (T)}, then $A$ is said to have {\it property~(T)}.
\end{Def}
\begin{Def}[Popa \cite{Pop}]
Let $M$ be a finite von Neumann algebra with a faithful normal tracial state,
$B$ be a von Neumann subalgebra of $M$.
The pair $(M,B)$ is said to have {\it relative property~(T)}
if there is a faithful normal tracial state $\tau$ on $M$ satisfying the following condition:
For any $\epsilon >0$, there exist $\delta >0$ and a finite subset $Q\subset M$ such that
if $H$ is a Hilbert $M$-bimodule, $\xi$ is a unit vector in $H$
satisfying $\|\langle\cdot\xi, \xi\rangle -\tau\|<\delta, \|\langle\xi\cdot, \xi\rangle -\tau\|<\delta$ and $\|x\xi -\xi x\| <\delta$,
then there is a $B$-central unit vector $\xi _0$ with $\|\xi _0-\xi\|<\epsilon$.
If the pair $(M, M)$ has relative property~{\rm (T)}, then $M$ is said to have {\it property~(T)}.
\end{Def}
Here we recall the basic facts of property~{\rm (T)} for operator algebras.
The next theorem says the definition is natural.
\begin{Thm}[Leung-Ng \cite{LN}, Popa \cite{Pop}]
Let $(\Gamma,\Lambda)$ be a pair of a group and its subgroup.
Then the following are equivalent.
\begin{enumerate}[\upshape (1)]
\item The pair $\left(\Gamma,\Lambda\right)$ has relative property~{\rm (T)}.
\item The pair $\left({\rm C}_r^*(\Gamma),{\rm C}_r^*(\Lambda)\right)$ has relative property~{\rm (T)}.
\item The pair $\left({\rm C}^*(\Gamma),{\rm C}^*(\Lambda)\right)$ has relative property~{\rm (T)}.
\item The pair $\left(L(\Gamma),L(\Lambda)\right)$ has relative property~{\rm (T)}.
\end{enumerate}
\end{Thm}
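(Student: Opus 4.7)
The plan is to exploit the dictionary between unitary representations of $\Gamma$ and Hilbert bimodules over the corresponding group operator algebra. Under this dictionary, almost-central vectors correspond to almost-invariant vectors for the conjugation action, so relative property~(T) for a pair of groups should translate directly into relative property~(T) for each of the pairs of operator algebras, once one ensures that the bimodules really are over the algebra in question. I would use the standard Kazhdan-type reformulation of group-theoretic relative property~(T): for each $\epsilon>0$ there exist $\delta>0$ and finite $F \subset \Gamma$ such that any unitary representation of $\Gamma$ with an $(F,\delta)$-invariant unit vector admits a $\Lambda$-invariant unit vector within $\epsilon$.

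For $(1) \Rightarrow (2),(3),(4)$, I would argue uniformly. Let $H$ be a Hilbert $A$-bimodule and $\xi$ an almost-central unit vector with respect to a finite $Q \subset A$. After reducing $Q$ to a finite subset of $\Gamma$ by density and operator-norm continuity of the bimodule structure, consider the conjugation representation $\pi_{\mathrm{conj}}(g)\eta := u_g \eta u_{g^{-1}}$, a unitary representation of $\Gamma$ on $H$. Almost-centrality of $\xi$ is exactly $(F,\delta)$-invariance under $\pi_{\mathrm{conj}}$, so group-theoretic relative property~(T) produces a $\Lambda$-invariant unit vector $\xi_0 \in H$ within $\epsilon$ of $\xi$; since $\pi_{\mathrm{conj}}$-invariance on $\Lambda$ is precisely $\Lambda$-centrality in the bimodule, this is the desired approximant. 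For the von Neumann case the trace-approximation hypotheses on $\xi$ are not used in this direction, so the same argument applies.

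For the converses $(2),(3),(4) \Rightarrow (1)$, given a unitary representation $\pi: \Gamma \to U(K)$ with an $(F,\delta)$-invariant unit vector $\eta$, I would build a bimodule producing a $\Lambda$-invariant vector near $\eta$. For the full algebra case, $K$ itself with left action $\pi$ and trivial right action is a \Cs-bimodule over ${\rm C}^*(\Gamma)$; almost-centrality is almost-invariance, and any $\Lambda$-central approximant is $\Lambda$-invariant under $\pi$. For the reduced and von Neumann cases the trivial representation need not factor through ${\rm C}_r^*(\Gamma)$, so I would instead work on $K \otimes \ell^2(\Gamma)$ with left action $g \mapsto \pi(g) \otimes \lambda_g$ and right action $g \mapsto 1 \otimes \rho_g$. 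Fell absorption guarantees that both actions factor through ${\rm C}_r^*(\Gamma)$ and extend normally to $L(\Gamma)$; the distinguished vector $\eta \otimes \delta_e$ has both vector states equal to the canonical trace (so Popa's trace-approximation conditions are automatic), and its commutator with $u_g$ has norm $\|\pi(g)\eta-\eta\|$. Applying operator-algebraic relative property~(T) yields a $\Lambda$-central unit vector $\xi_0$ close to $\eta \otimes \delta_e$; expanding $\xi_0 = \sum_g \eta_g \otimes \delta_g$ and reading off the $\delta_e$-coordinate of the $\Lambda$-centrality identity $(\pi(h) \otimes \lambda_h)\xi_0 = (1 \otimes \rho_h)\xi_0$ forces $\pi(h)\eta_e = \eta_e$ for $h \in \Lambda$, with $\eta_e$ close to $\eta$; after renormalization this is the required $\Lambda$-invariant vector.

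The main obstacle is the converse step for the reduced and von Neumann cases, where one cannot naively promote a unitary representation of $\Gamma$ to a bimodule over ${\rm C}_r^*(\Gamma)$ or $L(\Gamma)$. Fell's absorption principle resolves this by twisting with $\lambda$ on both sides; the residual technical point is that $\|\eta_e\|$ is bounded away from $0$ so that renormalization produces an honest unit vector, which holds once $\epsilon$ is chosen small enough at the outset.
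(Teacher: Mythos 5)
The paper itself gives no proof of this theorem — it defers entirely to Leung--Ng and Popa — so there is nothing internal to compare against; your argument reconstructs the standard proof from those sources (conjugation representation for $(1)\Rightarrow(2),(3),(4)$; Fell absorption on $K\otimes\ell^2(\Gamma)$ for the converses in the reduced and von Neumann cases) and is essentially correct. Three points to tighten. (i) With the convention $\rho_h\delta_g=\delta_{gh}$, reading the centrality identity $(\pi(h)\otimes\lambda_h)\xi_0=(1\otimes\rho_h)\xi_0$ in the $\delta_e$-coordinate gives $\pi(h)\eta_{h^{-1}}=\eta_{h^{-1}}$, not what you want; the relation $\pi(h)\eta_e=\eta_e$ comes from the $\delta_h$-coordinate (equivalently, comparing coefficients of $\delta_{g'}$ at $g'=h$). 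This is a one-line fix. (ii) The paper defines group-theoretic relative property~(T) via positive definite functions, so the ``standard Kazhdan-type reformulation'' you start from — in the strong form where the $\Lambda$-invariant vector is produced $\epsilon$-close to the given almost-invariant one — is itself a nontrivial equivalence for pairs (due to Jolissaint) and should be cited rather than treated as a restatement. (iii) In $(4)\Rightarrow(1)$, Popa's definition only posits the existence of \emph{some} faithful normal trace $\tau$ for which the axiom holds, whereas the vector states of $\eta\otimes\delta_e$ equal the canonical trace of $L(\Gamma)$; to close this you need Popa's trace-independence result \cite[Proposition 4.1]{Pop}, or to argue that the canonical trace may be used. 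None of these affects the architecture of your argument.
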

For the proof, we refer the reader to \cite{LN}, \cite{Pop}.

Next we recall the permanence properties of property~{\rm (T)} for \Cs -algebras.
This gives many examples of \Cs -algebras with property~{\rm (T)}.
\begin{Prop}[Leung-Ng \cite{LN}]\label{Prop:PerT}
Property~{\rm (T)} for \Cs -algebras is preserved by the following operations.
\begin{enumerate}[\upshape (1)]
\item Taking a quotient.
\item Taking a maximal {\rm (}hence arbitrary{\rm )} tensor product.
\item Taking a full {\rm (}hence arbitrary{\rm )} crossed product by a property~{\rm (T)} group.
\end{enumerate}
\end{Prop}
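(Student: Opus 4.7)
The plan is to treat each of the three permanence properties by working with the bimodule formulation of relative property~(T), pulling back or restricting bimodule structure so that the hypothesis on the constituent \Cs -algebras can be applied.

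Part~(1) is essentially formal. Given a quotient map $\pi\colon A\twoheadrightarrow A/I$, any Hilbert $A/I$-bimodule becomes a Hilbert $A$-bimodule through $\pi$. For $\epsilon>0$, take $\delta>0$ and a finite set $Q\subset A$ from property~(T) of $A$, and set $Q':=\pi(Q)$. Any unit vector $\xi$ that is $(\delta,Q')$-almost central for $A/I$ is automatically $(\delta,Q)$-almost central for $A$, and the resulting $A$-central vector $\xi_0$ is also $A/I$-central because $\pi$ is surjective.

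For~(2), the key structural fact is that a Hilbert $A\otimes_{\max}B$-bimodule $H$ restricts to commuting Hilbert bimodule structures for $A$ and $B$, so the orthogonal projection $P\colon H\to H^A$ onto the closed subspace of $A$-central vectors commutes with both the left and right $B$-actions. Hence $H^A$ is itself a Hilbert $B$-bimodule and $\|bP\xi-P\xi b\|\le \|b\xi-\xi b\|$ for $b\in B$. The plan is: given $\epsilon$, first apply property~(T) of $A$ with a very small parameter $\epsilon_1$ to force $\|P\xi-\xi\|<\epsilon_1$; then apply property~(T) of $B$ inside $H^A$ to obtain a $B$-central (hence $A\otimes_{\max}B$-central) vector $\xi_0$ within $\epsilon_2$ of $P\xi$. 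Choose $\epsilon_1+\epsilon_2<\epsilon$ and take the required finite set to be the union of the sets produced by the two applications (embedded via $a\mapsto a\otimes 1$ and $b\mapsto 1\otimes b$), with $\delta$ the minimum of the two corresponding tolerances.

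Part~(3) is the most substantive. For a Hilbert $A\rtimes\Gamma$-bimodule $H$, the canonical unitaries $u_g$ provide left and right actions of $\Gamma$ satisfying $u_g a u_g^{-1}=\alpha_g(a)$. The crucial observation is that the inner action $\pi_g\xi:=u_g\xi u_g^{-1}$ is a genuine unitary representation of $\Gamma$ on the Hilbert space $H$, and it preserves the $A$-central subspace $H^A$: if $a\xi=\xi a$ then $a\pi_g\xi=u_g\alpha_{g^{-1}}(a)\xi u_g^{-1}=u_g\xi\alpha_{g^{-1}}(a)u_g^{-1}=\pi_g\xi\cdot a$. The plan is then (i) invoke property~(T) of $A$ to replace $\xi$ by a nearby $A$-central unit vector $\xi_1$; (ii) use $\|\pi_g\xi_1-\xi_1\|=\|u_g\xi_1-\xi_1 u_g\|\le \|u_g\xi-\xi u_g\|+2\|\xi_1-\xi\|$ to see that $\xi_1$ is still almost $\pi$-invariant on $H^A$; and (iii) apply property~(T) of $\Gamma$ in its form for unitary representations to $\pi|_{H^A}$ to obtain a genuine $\pi$-invariant unit vector $\xi_0\in H^A$. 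Such a $\xi_0$ is $A$-central and commutes with every $u_g$, hence is central for the whole of $A\rtimes\Gamma$. The main obstacle lies in this part: one must choose the parameter fed into property~(T) of $A$ small enough that the almost-$\pi$-invariance of $\xi_1$ sits within the tolerance demanded by property~(T) of $\Gamma$, and the observation that $\pi_g$ preserves $H^A$ — which relies essentially on the covariance relation in the full crossed product and is what makes the two-step reduction possible — is the conceptual heart of the argument.
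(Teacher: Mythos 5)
The paper does not prove this proposition; it is quoted from Leung--Ng \cite{LN} without argument, so there is no in-paper proof to compare against. Your reconstruction is correct and follows the standard bimodule argument (essentially the one in \cite{LN}): part (1) is the formal pullback you describe; part (2) rests on the observation that the $A$- and $B$-actions commute, so the $A$-central subspace $H^A$ is a Hilbert $B$-bimodule and the projection $P$ onto it is contractive for commutators; part (3) rests on the covariance relation making $\pi_g\xi=u_g\xi u_g^{-1}$ a unitary representation of $\Gamma$ preserving $H^A$. Two routine points deserve a sentence each in a written-out version. In (2), $P\xi$ need not be a unit vector, so you must normalize before invoking property (T) of $B$; since $\|P\xi\|\geq 1-\epsilon_1$, this costs only $O(\epsilon_1)$ in both the distance estimate and the commutator bound, provided $\delta$ is taken somewhat smaller than $\delta_2$. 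In (3), you need property (T) of $\Gamma$ in its quantitative form (for every $\epsilon>0$ there are a finite Kazhdan set $S$ and $\delta'>0$ such that any $(S,\delta')$-invariant unit vector is within $\epsilon$ of an invariant one, e.g.\ via the projection onto invariant vectors); this is standard and is what lets you conclude that the invariant vector $\xi_0$ lies close to $\xi_1$, hence to $\xi$. With those adjustments the argument is complete, and the ``hence arbitrary'' parentheticals follow from part (1) since every other tensor product (resp.\ crossed product) is a quotient of the maximal (resp.\ full) one.
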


For further background knowledge of this paper,
the book \cite{BO} of Brown and Ozawa is a good reference.

\section{\texorpdfstring{Examples of \Cs -algebras with the Haagerup Property}{Examples of C*-algebras with the Haagerup Property}}\label{sec:Ex}

The goal of this section is to prove Theorems \ref{Thmint:nuc} and \ref{Thmint:per}.
As the study of nuclearity of \Cs -algebras (e.g., a proof of the fact that nuclearity passes to a quotient \cite[Chapter 9]{BO}),
in order to prove Theorem \ref{Thmint:nuc}, we need a deep theorem from the theory of von Neumann algebras.
To state and apply the theorem of Connes below, we need the following concepts of von Neumann algebras.
\begin{Def}
Let $M$ be a von Neumann algebra.
It is said to be {\it injective} if for any unital \Cs -algebra $A$ and for any its closed self-adjoint subspace $N$ containing the unit of $A$,
any {\rm u.c.p.\null} map from $N$ to $M$ can be extended to a {\rm u.c.p.\null} map from $A$ to $M$.
\end{Def}
\begin{Rem}\label{Rem:Arv}
By Arveson's extension theorem \cite[Theorem 1.6.1]{BO},
injectivity of a von Neumann algebra $M$ is equivalent to the
existence of a conditional expectation from $B(H)$ onto $M$ for some faithful representation $M\subset B(H)$ of $M$ on a Hilbert space $H$.
\end{Rem}
\begin{Def}
Let $M$ be a von Neumann algebra with the separable predual.
It is said to be {\it AFD} (abbreviation of ``Approximately Finite Dimensional'') if there is an increasing sequence of finite dimensional
$\ast$-subalgebras of $M$ whose union is dense in $M$ in the strong operator topology.
\end{Def}
The following theorem of Connes states these two properties are equivalent in the separable case.
\begin{Thm}[Connes \cite{Con}]
For a von Neumann algebra $M$ with the separable predual, the following are equivalent.
\begin{enumerate}[\upshape (1)]
\item The von Neumann algebra $M$ is injective.
\item The von Neumann algebra $M$ is AFD.
\end{enumerate}
\end{Thm}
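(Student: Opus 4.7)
The plan is to prove the two implications separately. The direction AFD $\Rightarrow$ injective is comparatively soft, whereas the converse is the celebrated deep content of Connes's paper and will be the main obstacle.

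For AFD $\Rightarrow$ injective, I would proceed as follows. Fix a faithful normal representation $M\subseteq B(H)$ and write $M=\overline{\bigcup_n M_n}^{\,\rm SOT}$ with each $M_n$ finite dimensional. Each $M_n$ is a finite direct sum of matrix algebras, hence injective, so by Arveson's extension theorem (see Remark \ref{Rem:Arv}) there is a conditional expectation $E_n:B(H)\to M_n\subseteq M$. The set of u.c.p.\ maps $B(H)\to B(H)$ is compact in the point-weak-$\ast$ topology, so one can pass to a cluster point $E$. Since $E_n|_{M_k}=\mathrm{id}_{M_k}$ for $n\geq k$ and each $E_n$ takes values in $M$, the limit $E$ is a u.c.p.\ map $B(H)\to M$ fixing the strongly dense subalgebra $\bigcup_k M_k$; a routine density argument (using Kaplansky's density theorem) shows $E|_M=\mathrm{id}_M$, so $E$ is a conditional expectation onto $M$ and injectivity follows from Remark \ref{Rem:Arv}.

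For the converse, I would follow the classical outline of Connes: (i) reduce to factors by direct integral decomposition; (ii) dispose of type I trivially, reducing type III to type II$_\infty$ via the continuous decomposition (crossed product with the modular automorphism group) and II$_\infty$ to II$_1$ by compression with a finite projection; (iii) for an injective II$_1$ factor $M$ with trace $\tau$, establish semidiscreteness, namely nets of normal u.c.p.\ maps $\phi_n:M\to\mathbb{M}_{k_n}$ and $\psi_n:\mathbb{M}_{k_n}\to M$ with $\psi_n\circ\phi_n\to\mathrm{id}_M$ pointwise in $\|\cdot\|_\tau$, extracted from the norm-one projection $B(L^2(M,\tau))\to M$ together with finite-rank approximation on the Hilbert-Schmidt side; (iv) upgrade semidiscreteness to approximately central partitions of unity inside $M$ and from these build finite dimensional $\ast$-subalgebras of $M$ whose union is strongly dense; (v) conclude via Murray-von Neumann's uniqueness of the hyperfinite II$_1$ factor.

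The decisive obstacle is step (iv): passing from purely \emph{external} CP-approximation (semidiscreteness) to \emph{internal} finite dimensional subalgebras of $M$ requires Connes's delicate averaging and perturbation argument in the tracial ultrapower $M^\omega$. The central technical ingredient is the Powers--St\o rmer type inequality $\|a^{1/2}-b^{1/2}\|_2^{2}\leq\|a-b\|_1$ for positive $a,b$, which converts approximate $\tau$-invariance of a state on $B(L^2(M,\tau))$ into norm proximity of the square roots of its Radon-Nikodym densities; this is what allows one to replace an approximately invariant \emph{state} (produced from semidiscreteness via the commutant) by an approximately central \emph{vector}, and then by genuine approximately central projections in $M$ with the required commutation estimates. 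Everything else is either classical (reduction theory, modular theory) or, once finite dimensional subalgebras of $M$ are produced, a direct application of Murray-von Neumann's uniqueness theorem.
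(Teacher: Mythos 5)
The paper does not prove this theorem: it is quoted as a black box, with the reader referred to \cite{Con}, so there is no in-paper argument to compare against. Judged on its own terms, your sketch has one concrete gap in the direction you call soft, and the other direction is an outline rather than a proof.

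In the AFD $\Rightarrow$ injective direction, a point-weak$^*$ cluster point $E$ of the expectations $E_n\colon B(H)\to M_n$ does take values in $M$ and does fix the \emph{norm} closure $A$ of $\bigcup_k M_k$; but $A$ is only $\sigma$-weakly dense in $M$, and $E$ need not be normal, so ``$E$ fixes a weak$^*$-dense C$^*$-subalgebra'' does not give $E|_M=\mathrm{id}_M$. Kaplansky density is of no help here because $E$ is not strongly continuous on bounded sets; indeed there exist u.c.p.\ maps on $L^\infty[0,1]$ that fix $C[0,1]$ without being the identity. The standard repair is to work on the commutant: average over the compact unitary group of $M_n$ to obtain conditional expectations $F_n\colon B(H)\to M_n'$, observe that every element of $M'\subseteq M_n'$ is genuinely fixed by each $F_n$ (not merely by a density argument), pass to a cluster point to get a conditional expectation onto $\bigcap_n M_n'=M'$, and then transfer injectivity from $M'$ back to $M$ via the standard form ($JMJ=M'$) together with the representation-independence of injectivity.

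For injective $\Rightarrow$ AFD you have written a table of contents for Connes's paper, not a proof. The outline is accurate, and you have correctly identified the decisive obstacle (upgrading external CP-approximation to internal finite-dimensional subalgebras via the Powers--St{\o}rmer inequality and the ultrapower perturbation argument). But each of your steps (i)--(iv) is itself a substantial theorem --- measurable-selection issues in the direct-integral reduction, the continuous decomposition for type III, and the chain injective $\Rightarrow$ semidiscrete $\Rightarrow$ hyperfinite that occupies the bulk of \cite{Con} --- and none of it is established by the sketch. Since the paper itself only cites the result, this is a reasonable way to \emph{use} the theorem, but it cannot be regarded as a proof of it.
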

For the proof and more information about the theorem, we refer the reader to \cite{Con}.
Before proving our theorem, we need a lemma.
\begin{Lem}\label{Lem:CE}
Let $M$ be a {\rm (}not necessarily separable{\rm )} injective von Neumann algebra with a faithful normal tracial state $\tau$.
Then there exists a net $(\Phi_n)_n$ of conditional expectations on $M$
which satisfies the following three conditions.
\begin{enumerate}[\upshape (1)]
\item Each image of $\Phi_n$ is finite dimensional.
\item Each $\Phi_n$ preserves $\tau$.
\item The net $(\Phi_n)_n$ converges to the identity map in the pointwise strong operator topology.
\end{enumerate}
\end{Lem}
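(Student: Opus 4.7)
The plan is to reduce the lemma to Connes' theorem via a separable subalgebra argument. It suffices to establish the following approximation: for every finite subset $F\subset M$ and every $\varepsilon>0$ there exists a finite-dimensional unital $*$-subalgebra $N\subset M$ with $\|E_N(x)-x\|_\tau<\varepsilon$ for all $x\in F$, where $E_N:M\to N$ is the unique $\tau$-preserving conditional expectation onto $N$; such an expectation exists for every von Neumann subalgebra of a finite tracial von Neumann algebra. Directing such $\Phi:=E_N$ by pairs $(F,\varepsilon)$ under refinement then yields a net satisfying (1) and (2) by construction, while condition (3) follows from pointwise $\|\cdot\|_\tau$-convergence together with the bimodule estimate $\|ay\|_\tau\le\|a\|\,\|y\|_\tau$ and uniform boundedness of conditional expectations, which bootstrap $L^2$-approximation on $M$ into strong-operator convergence on all of $L^2(M,\tau)$.

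Given $F$, let $N_0$ be the von Neumann subalgebra of $M$ generated by $F$; since $F$ is finite, $N_0$ has separable predual. The key observation is that $N_0$ is itself injective: if $E:B(H)\to M$ is the conditional expectation furnished by injectivity of $M$ (via Remark \ref{Rem:Arv}) and $E_{N_0}^M:M\to N_0$ is the $\tau$-preserving conditional expectation, then $E_{N_0}^M\circ E:B(H)\to N_0$ is a conditional expectation for the faithful representation of $N_0$ inherited from $M\subset B(H)$, so Remark \ref{Rem:Arv} applied to $N_0$ yields its injectivity. Applying Connes' theorem to $N_0$ produces an increasing sequence $K_1\subset K_2\subset\cdots$ of finite-dimensional unital $*$-subalgebras whose union is strong-operator dense in $N_0$.

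The $\tau$-preserving conditional expectations $E_{K_k}^{N_0}:N_0\to K_k$ extend to the orthogonal projections from $L^2(N_0,\tau)$ onto $L^2(K_k,\tau)$, so $\|E_{K_k}^{N_0}(x)-x\|_\tau\to 0$ for every $x\in N_0$ as $k\to\infty$. Choosing $k$ large enough that this is below $\varepsilon$ for all $x\in F$, and invoking uniqueness of the $\tau$-preserving conditional expectation to identify $E_{K_k}^M|_{N_0}$ with $E_{K_k}^{N_0}$, the choice $N:=K_k$ completes the approximation step. The only deep input is Connes' theorem itself; the principal subtlety I anticipate is the reduction from the possibly non-separable $M$ to the separable regime where Connes' theorem applies, which is handled cleanly by the fact that injectivity passes to the range of any conditional expectation together with the automatic existence of $\tau$-preserving expectations onto arbitrary von Neumann subalgebras in the finite tracial setting.
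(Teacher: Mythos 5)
Your proposal is correct and follows essentially the same route as the paper: reduce to the separable von Neumann subalgebra generated by a finite set, note that injectivity passes to it via the $\tau$-preserving conditional expectation, apply Connes' theorem to get an AFD structure, and then use the identification of $\|x-E_N(x)\|_\tau$ with the $L^2$-distance to $N$ (equivalently, that $E_N$ is the orthogonal projection onto $L^2(N,\tau)$) to assemble the net. The only cosmetic difference is that you index by pairs $(F,\varepsilon)$ while the paper uses the tolerance $1/|\mathfrak{F}|$.
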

\begin{proof}
Note first that since $M$ has a faithful normal tracial state,
\cite[Lemma 1.5.11]{BO} (with Remark \ref{Rem:Arv}) shows each von Neumann subalgebra of $M$ is injective.
From this, for each finite subset $\mathfrak{F}$ of $M$,
the von Neumann subalgebra $W^*(\mathfrak{F})$ of $M$ generated by $\mathfrak{F}$ is injective and separable.
From this, by Connes's theorem, each von Neumann algebra $W^*(\mathfrak{F})$ is an AFD von Neumann algebra.
Consequently, for each finite subset $\mathfrak{F}$ of $M$, there is a finite dimensional $\ast$-subalgebra $M_{\mathfrak{F}}$ of $M$, such that
${\rm dist}_\tau(x, M_\mathfrak{F})<1/|\mathfrak{F}|$ for all $x\in\mathfrak{F}$ (where $|\mathfrak{F}|$ is the cardinality of $\mathfrak{F}$).
Then again by \cite[Lemma 1.5.11]{BO}, for each finite subset $\mathfrak{F}$ of $M$,
there is a $\tau$-preserving conditional expectation $E_\mathfrak{F}$
from $M$ onto $M_\mathfrak{F}$.
Then notice that for any $\tau$-preserving conditional expectation $E$
with the range $N$,
we have
\[\| x-E(x)\|_\tau ={\rm dist}_\tau(x, N).\]
On the other hand, for each $x\in M$,
${\rm dist}_\tau\left(x, M_\mathfrak{F}\right)$ converges to zero as $\mathfrak{F}$ tends to infinity.
From this, the net $(E_\mathfrak{F})_\mathfrak{F}$ satisfies the desired three conditions.
\end{proof}

Now we prove our first main result, Theorem \ref{Thmint:nuc}.
\begin{Thm}\label{Thm:nuc}
Let $(A,\tau)$ be a pair of a unital nuclear \Cs -algebra and a faithful tracial state.
Then it has the Haagerup property.
\end{Thm}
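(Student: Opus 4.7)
My plan is to pass to the enveloping von Neumann algebra and invoke Connes's theorem through Lemma~\ref{Lem:CE}. Let $\pi_\tau: A\to B(L^2(A,\tau))$ be the GNS representation and set $M:=\pi_\tau(A)''$, so that $\tau$ extends to a faithful normal tracial state $\bar\tau$ on $M$. I would first verify that $M$ is injective: this follows from the completely positive approximation property (CPAP) of the nuclear C*-algebra $A$ together with Arveson's extension theorem, by taking an ultralimit of compositions of the form $B(L^2(A,\tau))\xrightarrow{\tilde\sigma_k}M_{d_k}\xrightarrow{\rho_k}A\subset M$, where $\tilde\sigma_k$ is an Arveson extension of the CPAP map $\sigma_k:A\to M_{d_k}$, to obtain a conditional expectation of $B(L^2(A,\tau))$ onto $M$.

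Next, applying Lemma~\ref{Lem:CE} to $(M,\bar\tau)$ produces a net $(\Phi_n)$ of $\bar\tau$-preserving conditional expectations on $M$ with finite-dimensional ranges $N_n\subset M$ satisfying $\|\Phi_n(x)-x\|_{\bar\tau}\to 0$ for every $x\in M$. Each restriction $\Phi_n|_A:A\to M$ is u.c.p.\ and $\tau$-preserving; its $L^2$-extension is the orthogonal projection of $L^2(A,\tau)=L^2(M,\bar\tau)$ onto the finite-dimensional subspace $L^2(N_n,\bar\tau|_{N_n})$, hence finite-rank and compact; and $\|\Phi_n|_A(a)-a\|_\tau\to 0$ for all $a\in A$. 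All three conditions of the Haagerup property hold except that the ranges of the $\Phi_n|_A$ lie in $M$ rather than in $A$.

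The remaining step is to perturb these maps so their ranges lie in $A$. I would use the CPAP of $A$ a second time: choose u.c.p.\ maps $\sigma_k:A\to M_{d_k}$ and $\rho_k:M_{d_k}\to A$ with $\rho_k\sigma_k\to\mathrm{id}_A$ in point-norm. The composite $\Phi_n\circ\rho_k:M_{d_k}\to N_n\subset M$ is a u.c.p.\ map between finite-dimensional algebras, encoded by its Choi matrix $c^{(n,k)}\in M_{d_k}\otimes M$. In the finite von Neumann algebra $M_{d_k}\otimes M$ the C*-subalgebra $M_{d_k}\otimes A$ is weakly dense, so Kaplansky density lets one approximate $c^{(n,k)}$ in trace norm by a positive element $\tilde c^{(n,k)}\in M_{d_k}\otimes A$ with matching diagonal traces and unit total. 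This yields u.c.p.\ maps $\tilde\theta_{n,k}:M_{d_k}\to A$ that approximate $\Phi_n\circ\rho_k$ in point-$\|\cdot\|_{\bar\tau}$ on bounded subsets, and the compositions $\phi_{n,k}:=\tilde\theta_{n,k}\circ\sigma_k:A\to A$ are u.c.p.\ with finite-dimensional range in $A$; a diagonal argument delivers $\|\phi_{n,k}(a)-a\|_\tau\to 0$.

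The principal obstacle is enforcing the trace condition~(1) of Definition~\ref{Def:Haa}: the Choi-matrix approximation yields $\tau\circ\phi_{n,k}$ close to $\tau$ but not obviously dominated by $\tau$. To handle this, I would arrange the Kaplansky approximation of the diagonal entries of $c^{(n,k)}$ so that $\tau\circ\phi_{n,k}$ matches $\tau\circ\rho_k\sigma_k$ exactly (which itself converges to $\tau$), and then apply the convex-combination modification $\psi_{i,n}(x)=c_n\phi_i(x)+\cdots$ from the remark preceding this theorem to convert the resulting u.c.p.\ maps into c.c.p.\ $\tau$-preserving ones without destroying the $L^2$-compactness or the point-$\|\cdot\|_\tau$ convergence. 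The resulting net of maps then satisfies the (c.c.p.\ $\tau$-preserving variant of the) Haagerup property conditions, completing the proof.
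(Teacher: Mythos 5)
Your opening steps coincide with the paper's proof: injectivity of $M=\pi_\tau(A)''$ (which the paper simply asserts is easy for nuclear $A$), then Lemma~\ref{Lem:CE} to produce $\tau$-preserving finite-rank conditional expectations $\Phi_n$, and the observation that the only remaining problem is that their ranges lie in $M$ rather than in $A$. The divergence, and the gap, is in the perturbation step. Your corrected maps have the form $\phi_{n,k}=\tilde\theta_{n,k}\circ\sigma_k$ with $\sigma_k\colon A\to\mathbb{M}_{d_k}$ taken from the CPAP, which puts the uncontrolled leg \emph{first}: $\tau\circ\phi_{n,k}=(\tau\circ\tilde\theta_{n,k})\circ\sigma_k$ is a positive functional on $A$ that factors through $\sigma_k$, hence lies in the span of the matrix coefficients of $\sigma_k$, and these are arbitrary states on $A$ bearing no relation to $\tau$. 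Concretely, for $A=C[0,1]$ with $\tau$ Lebesgue measure and the usual CPAP built from point evaluations and a partition of unity, $\tau\circ\rho_k\sigma_k$ is a finitely supported atomic measure: it is not dominated by $C\tau$ for any $C$, and $\rho_k\sigma_k$ does not even extend to a bounded operator on $L^2(A,\tau)$. So your $\phi_{n,k}$ can fail condition (1) of Definition~\ref{Def:Haa} and can fail to be $L^2$-bounded at all. Tuning the Choi-matrix approximation so that $\tau\circ\phi_{n,k}$ equals $\tau\circ\rho_k\sigma_k$ does not help, because pointwise convergence of states to $\tau$ is far weaker than domination by $(1+\epsilon)\tau$, and the convex-combination trick from the remark following Definition~\ref{Def:Haa} presupposes exactly such a domination.

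The paper's proof reverses the architecture precisely to avoid this: the map out of $A$ is the \emph{trace-preserving} expectation $\Phi_\alpha$ onto its finite-dimensional range $E_\alpha$, and what is approximated by c.p.\ maps into $A$ (via the same Choi-matrix/Kaplansky density device you use, together with a Hahn--Banach convex-combination argument to make the approximants almost unital in norm) is the inclusion $E_\alpha\hookrightarrow A^{**}$. Then $\tau\circ\Psi_\beta^{(\alpha)}\circ\Phi_\alpha=(\tau\circ\Psi_\beta^{(\alpha)}|_{E_\alpha})\circ\Phi_\alpha$, so the trace defect only has to be controlled on the finite-dimensional algebra $E_\alpha$, where pointwise convergence to the faithful state $\tau|_{E_\alpha}$ \emph{does} yield $\tau\circ\Psi_\beta^{(\alpha)}|_{E_\alpha}\le c_\beta^{(\alpha)}\tau|_{E_\alpha}$ with $c_\beta^{(\alpha)}\to1$, after which one normalizes. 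Rebuilding your argument with $\Phi_n$ as the first leg (the auxiliary factorization through $\mathbb{M}_{d_k}$ is then not needed) recovers the proof.
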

\begin{proof}
Let $A$ be a unital nuclear \Cs -algebra, $\tau$ be a faithful tracial state on $A$.
We will show the pair $(A, \tau)$ has the Haagerup property.
Let $\pi_\tau$ be the GNS-representation of $\tau$.
Then, since $A$ is nuclear, it is easy to show $\pi_\tau(A)''$ is an injective von Neumann algebra.
Using Lemma \ref{Lem:CE}, we can choose a net $(\Phi_\alpha)_\alpha$ of conditional expectations on $\pi_\tau(A)''$
satisfying the following three conditions.
\begin{itemize}
\item Each image of $\Phi_\alpha$ has a finite dimension.
\item Each $\Phi_\alpha$ preserves $\tau$.
\item The net $(\Phi_\alpha)_\alpha$ converges to the identity map in the pointwise strong operator topology.
\end{itemize}
From these conditions, each $\Phi_\alpha$ is $L^2$-compact, and it converges to id$_{L^2(A,\tau)}$ strongly, as in the definition of the Haagerup property.
However, unfortunately, these ranges are not contained in $A$ in general.
So we have to modify $\Phi_\alpha$ to take its values in $A$.
To do this, we need the following notations.
We identify $\pi_\tau(A)''$ with the direct summand $A^{**}c(\pi_\tau)$ of $A^{**}$, where $c(\pi_\tau)$ is the central cover of $\pi_\tau$ \cite[Definition 1.4.2]{BO}.
Denote the image of $\Phi_\alpha$ by $E_\alpha$, and denote the linear span of $1_{A^{**}}$ and $E_\alpha$ by $F_\alpha$,
which is a finite dimensional \Cs -subalgebra of $A^{**}$.
Denote the canonical inclusion $F_\alpha\hookrightarrow A^{**}$ by $\iota_\alpha$.
Since $\iota_\alpha$ is a $\ast$-homomorphism, in particular it is contained in CP($F_\alpha$,~$A^{**}$).
Then, by using the canonical bijective correspondence between CP($F_\alpha$,~$A^{**}$) and $(A^{**}\otimes{F_\alpha})_+$ \cite[Theorem 1.5.12]{BO},
and the density of $A\otimes{F_\alpha}$ in $A^{**}\otimes{F_\alpha}$ in the strong operator topology,
we can find a bounded net $(\Psi_\beta^{(\alpha)})_\beta$
from CP($F_\alpha$,~$A$) that converges to $\iota_\alpha$ in the pointwise strong operator topology, as $\beta$ tends to infinity (by the Kaplansky density theorem).
Then, since each $\Psi_\beta^{(\alpha)}(1)$ is contained in $A\subset A^{**}$ and it converges to $1_{A^{**}}=1_A\in A$ weakly as $\beta$ tends to infinity, by retaking a net of {\rm c.p.\null} maps from the convex hull of $\{\Psi_\beta^{(\alpha)}\}_\beta$,
we may assume $(\Psi_\beta^{(\alpha)}(1))_\beta$ converges to $1$ in norm.
(Recall the Hahn-Banach separation Theorem.)
We remark that each support of $\tau\circ\Psi_\beta^{(\alpha)}|_{E_\alpha}$ is contained in that of $\tau|_{E_\alpha}$, which is equal to $E_\alpha$,
and the former net converges to $\tau|_{E_\alpha}$ pointwise as $\beta$ tends to infinity.
From this and the fact $E_\alpha$ has a finite dimension, we can choose a net $(c_\beta^{(\alpha)})_\beta$ of positive numbers,
such that $(c_\beta^{(\alpha)})_\beta$ converges to $1$ as $\beta$ tends to infinity and
$\tau\circ\Psi_\beta^{(\alpha)}|_{E_\alpha}\leq c_\beta^{(\alpha)}\tau|_{E_\alpha}$.
Put \[d_\beta^{(\alpha)}:=\max\{c_\beta^{(\alpha)},\|\Psi_\beta^{(\alpha)}(1)\|\}\]
for each $\alpha$ and $\beta$.
Then by definition of $d_\beta^{(\alpha)}$, each map $(d_\beta^{(\alpha)})^{-1}\Psi_\beta^{(\alpha)}$ is a {\rm c.c.p.\null} map that decreases $\tau$.
Now it is easy to take the desired net from the set $\{c(\pi_\tau)\Psi_\beta^{(\alpha)}\circ\Phi_\alpha\}_{\alpha,\beta}$,
here we identify $A$ with the \Cs -subalgebra $\pi_\tau(A)$ of $\pi_\tau(A)''=A^{**}c(\pi_\tau)$, not with the canonical \Cs -subalgebra of $A^{**}$.
\end{proof}
Indeed, in the above proof, we only use the injectivity of ${\pi_\tau (A)}''$.
From this, we can also apply the proof of Theorem \ref{Thm:nuc} for some other cases.
Here we summarize the cases Theorem \ref{Thm:nuc} is applicable.
Part (1) is pointed out by Professor Narutaka Ozawa.
\begin{Cor}\label{Cor:nsb}
\begin{enumerate}[\upshape (1)]
\item Let $A$ be a unital exact \Cs -algebra with a faithful amenable tracial state $\tau$.
Then the pair $(A, \tau)$ has the Haagerup property.
\item Let $A$ be a unital residually finite dimensional \Cs -algebra with a faithful tracial state.
Then $A$ has a faithful tracial state $\tau$ with the Haagerup property.
\end{enumerate}
\end{Cor}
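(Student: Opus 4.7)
The plan for both parts is to reduce to Theorem \ref{Thm:nuc} by verifying in each situation that the enveloping von Neumann algebra $\pi_\tau(A)''$ is injective. The remark following the proof of Theorem \ref{Thm:nuc} already records that nuclearity is used there only to guarantee this injectivity; once it is in hand, the construction (Lemma \ref{Lem:CE}, followed by the Arveson--Stinespring correspondence and a Kaplansky density perturbation) produces a net of c.c.p.\ maps $A\to A$ decreasing $\tau$ and acting compactly on $L^2(A,\tau)$, which is exactly what the Haagerup property requires.

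For part (1), I would invoke the standard characterization of amenable traces: on an exact \Cs-algebra $A$, amenability of a tracial state $\tau$ forces $\pi_\tau(A)''$ to be injective (equivalently, by \cite{Con}, hyperfinite); see, e.g., \cite{BO}. Once this black box is opened, the proof of Theorem \ref{Thm:nuc} applies without further modification.

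For part (2), since the statement permits us to choose a new faithful tracial state, I would manufacture $\tau$ directly from the RFD structure. Choose a countable family of finite dimensional representations $\pi_n\colon A\to \mathbb{M}_{k_n}$ whose direct sum is faithful, and set
\[\tau:=\sum_{n\geq 1}c_n\,\mathrm{tr}_{k_n}\circ\pi_n,\]
with strictly positive weights $c_n$ satisfying $\sum c_n=1$. Then $\tau$ is a faithful tracial state, and its GNS representation is unitarily equivalent to $\bigoplus_n \pi_n$ on the corresponding weighted Hilbert direct sum. The summand projections commute with $\pi_\tau(A)$, so $\pi_\tau(A)''$ is block-diagonal inside the atomic type I, and hence injective, von Neumann algebra $\bigoplus_n\mathbb{M}_{k_n}$. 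The inclusion admits a $\tau$-preserving normal conditional expectation, so by the same application of \cite[Lemma 1.5.11]{BO} that appears in the proof of Lemma \ref{Lem:CE}, the algebra $\pi_\tau(A)''$ is itself injective, and Theorem \ref{Thm:nuc} applies.

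The main technical hurdle is locating the precise form of the implication in (1): one needs the statement that an amenable trace on an exact algebra has an injective bicommutant, which I would either cite directly from \cite{BO} or verify from the definition of amenable trace by a matrix approximation argument. Part (2) is by contrast almost entirely a construction, with the only content being that the chosen $\tau$ is faithful and that its bicommutant lands inside a hyperfinite type I algebra.
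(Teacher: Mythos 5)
Your proposal is correct and follows essentially the same route as the paper: both parts reduce to verifying that $\pi_\tau(A)''$ is injective and then observing that the proof of Theorem \ref{Thm:nuc} only used this injectivity, with part (2) handled by producing a faithful trace whose GNS bicommutant sits in a (finite) type {\rm I}, hence injective, von Neumann algebra. The only difference is one of packaging: where you cite as a black box the fact that an amenable trace on an exact (locally reflexive) \Cs -algebra has injective GNS bicommutant, the paper writes that argument out explicitly (min-continuity of the product map $\pi_\tau\times\pi_\tau^{\rm op}$ from amenability, property $\mathrm{C}''$ from exactness, and Lance's trick), so your appeal to \cite{BO} is exactly the step the paper proves.
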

\begin{proof}
(1) It suffices to show $\pi_\tau(A)''$ is injective.
By amenability of $\tau$, the product $\ast$-homomorphism
\[\pi_\tau\times\pi^{\rm op}_\tau\colon A\odot A^{\rm op}\rightarrow B(L^2(A, \tau))\]
is continuous with respect to the spatial tensor product norm \cite[Theorem 6.2.7]{BO}.
Then by universality of the double dual, it extends to the normal $\ast$-homomorphism
from $(A\otimes A^{\rm op})^{**}$ to $B(L^2(A, \tau))$.
Notice that since $A$ is exact, it has property $\mathrm{C}''$ \cite[Theorem 9.3.1]{BO}.
(This still holds for non-separable case, by \cite[Proposition 9.2.5 and Lemma 9.2.8]{BO}.)
So the canonical inclusion
\[A^{**}\odot(A^{\rm op})\rightarrow(A\otimes A^{\rm op})^{**}\]
is continuous with respect to the spatial tensor product norm.
Consequently, the product $\ast$-homomorphism
\[\pi_\tau(A)''\odot \pi^{\rm op}_\tau(A^{\rm op})\rightarrow B(L^2(A, \tau))\]
is continuous with respect to the spatial tensor product norm.
Note that $\left(\pi^{\rm op}_\tau(A^{\rm op})\right)'=\pi_\tau(A)''$ \cite[Theorem 6.1.4]{BO}.
Now injectivity of $\pi_\tau(A)''$ follows from Lance's trick \cite[Proposition 3.6.5]{BO}.\\
(2) By assumption, there exists a faithful tracial state $\tau$ on $A$ such that
$\pi _\tau(A)''$ is a type {\rm I} von Neumann algebra.
Since a type {\rm I} von Neumann algebra is injective, we obtain the desired result.
\end{proof}
\begin{Rem}
In the above cases, the approximation maps in the Haagerup property can be taken as finite rank {\rm u.c.p.\null} maps.
This is an interesting phenomenon:
Though \Cs -algebras as above fail to have the internal (or even external) finite dimensional approximation by {\rm u.c.p.\null} maps in the norm topology in general,
they have the internal finite dimensional approximation by {\rm u.c.p.\null} maps in the $L^2$-topology.
\end{Rem}
\begin{Rem}
Part (1) of Corollary \ref{Cor:nsb} is applicable to many exact \Cs -algebras.
For example, any unital simple exact quasi-diagonal \Cs -algebra has a faithful amenable tracial state \cite[Proposition 7.1.16]{BO}.
On the other hand, for a unital \Cs -subalgebra of a nuclear \Cs -algebra $A$ with a faithful tracial state $\tau$,
the restriction $\tau|_B$ of $\tau$ on $B$ is amenable,
since any tracial state on a nuclear \Cs -algebra is amenable
and a restriction of an amenable tracial state is again amenable.
Consequently, all of these \Cs -algebras have a faithful tracial state with the Haagerup property.
\end{Rem}
\begin{Rem}
The proof of Corollary \ref{Cor:nsb} (1) only uses property $\mathrm{C}''$,
which is equivalent to the local reflexivity.
However, the existence of a non-exact locally reflexive \Cs -algebra is a long standing open problem. 
\end{Rem}

 Next we prove the permanence of the Haagerup property under a few canonical constructions in \Cs -algebras. While these results can be obtained adapting Jolissaint's von Neumann algebraic method from [18], for the readers convenience, we give a proof below. First we record the following lemma whose straightforward proof is left to the reader.
\begin{Lem}\label{Lem:con}
Let $A$ be a \Cs -algebra, $\tau$ be a faithful tracial state on $A$, $(A_i)_i$ be an increasing net of \Cs -subalgebras of $A$
whose union is dense in $A$ with respect to the $L^2$-norm determined by $\tau$.
Assume for each $i$, there is a trace-preserving conditional expectation $E_i$ from $A$ onto $A_i$.
Then the pair $(A, \tau)$ has the Haagerup property if and only if
each pair $(A_i, \tau)$ has the Haagerup property.
\end{Lem}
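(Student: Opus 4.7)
The plan is to handle the two implications by using the conditional expectations $E_i$ to shuttle approximating nets back and forth between $A$ and the subalgebras $A_i$. In both directions the key fact is that a trace-preserving conditional expectation $E_i\colon A\to A_i$ extends to the orthogonal projection $p_i$ of $L^2(A,\tau)$ onto $L^2(A_i,\tau)$, and the $L^2$-density hypothesis on $\bigcup_i A_i$ says precisely that $p_i\to 1_{L^2(A,\tau)}$ strongly, equivalently $\|E_i(a)-a\|_\tau\to 0$ for every $a\in A$.

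For the forward implication, given u.c.p.\ maps $(\phi_\alpha)_\alpha$ realizing the Haagerup property of $(A,\tau)$, I would set $\psi_\alpha^{(i)}:=E_i\circ\phi_\alpha|_{A_i}\colon A_i\to A_i$. These are u.c.p., are $\tau$-decreasing because $E_i$ is $\tau$-preserving, and satisfy $\|\psi_\alpha^{(i)}(a)-a\|_\tau=\|E_i(\phi_\alpha(a)-a)\|_\tau\le\|\phi_\alpha(a)-a\|_\tau\to 0$ for $a\in A_i$. On the GNS side, $\psi_\alpha^{(i)}$ acts on $L^2(A_i,\tau)$ as the compression by $p_i$ of the compact operator induced by $\phi_\alpha$, so it is compact.

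For the substantive backward implication, assume each $(A_i,\tau|_{A_i})$ has the Haagerup property, witnessed by u.c.p.\ nets $(\phi_\beta^{(i)})_\beta$ on $A_i$. I would lift these to $A$ by the composites $\Phi_{i,\beta}:=\iota_i\circ\phi_\beta^{(i)}\circ E_i\colon A\to A$, where $\iota_i\colon A_i\hookrightarrow A$ is the inclusion. Each $\Phi_{i,\beta}$ is u.c.p.\ and satisfies $\tau(\Phi_{i,\beta}(a))=\tau(\phi_\beta^{(i)}(E_i(a)))\le\tau(E_i(a))=\tau(a)$ for $a\in A_+$. On $L^2(A,\tau)$ it factors as $p_i$ followed by the compact operator on $L^2(A_i,\tau)$ associated with $\phi_\beta^{(i)}$ followed by the isometric inclusion back, so it is again compact.

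The only genuinely nontrivial step is to assemble a single net of such lifts that converges to the identity in $L^2$-norm on all of $A$. Given a finite set $F\subset A$ and $\epsilon>0$, I would first choose $i$ with $\|E_i(a)-a\|_\tau<\epsilon/2$ for all $a\in F$ using the density observation above, and then invoke the Haagerup property of $(A_i,\tau|_{A_i})$ to choose $\beta$ with $\|\phi_\beta^{(i)}(E_i(a))-E_i(a)\|_\tau<\epsilon/2$ for all $a\in F$; the triangle inequality yields $\|\Phi_{i,\beta}(a)-a\|_\tau<\epsilon$. Reindexing by pairs $(F,\epsilon)$ produces the required net. I expect the main obstacle to be not any isolated step but the clean packaging of this two-stage diagonalization, which is the \Cs -algebraic analogue of Jolissaint's trick.
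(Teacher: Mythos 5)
Your proof is correct, and since the paper explicitly leaves the proof of this lemma to the reader as ``straightforward,'' your argument --- compressing the approximants by $E_i$ for the forward direction, lifting via $\iota_i\circ\phi_\beta^{(i)}\circ E_i$ for the converse, and diagonalizing using the fact that the trace-preserving expectations implement the orthogonal projections $p_i\to 1$ strongly --- is exactly the intended one. No gaps.
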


Now we establish the permanence properties of the Haagerup property.
Here we restate Theorem \ref{Thmint:per}.
\begin{Thm}\label{Thm:per}
Let ${(A_i, \tau_i)}_{i\in I}$ be a family of \Cs -algebras with the Haagerup property indexed by a set $I$.
Then the following hold.
\begin{enumerate}[\upshape (1)]
\item If I is countable, then the direct product $(\prod_{i\in I}A_i, \tau)$ has the Haagerup property for any tracial state $\tau$ of the form $\tau=\sum_{i\in I}c_i\tau_i$,
where $(c_i)_{i\in I}$ is a family of positive numbers whose sum is $1$.
More generally, any \Cs -subalgebra of $(\prod_{i\in I}A_i, \tau)$ which contains both $\bigoplus_{i\in I} A_i$ and $1$ has the Haagerup property with respect to the restriction tracial state.
\item The spatial tensor product $(\bigotimes_{i\in I}A_i, \bigotimes_{i\in I}\tau_i)$ has the Haagerup property.
\item The reduced free product $(A, \tau)=$\lower0.25ex\hbox{\LARGE $\ast$}${}_{i\in I}(A_i,\tau_i)$ has the Haagerup property.
\end{enumerate}
\end{Thm}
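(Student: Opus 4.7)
Since the statement is the \Cs-algebraic adaptation of Jolissaint's von Neumann argument \cite{Jol}, I would treat all three parts through a common scheme: invoke Lemma \ref{Lem:con} to reduce each construction to the case of a finite index set, then combine the given approximating nets on each factor into one on the target algebra. Throughout, the Remark following Definition \ref{Def:Haa} lets me move freely between u.c.p.\ trace-decreasing and c.c.p.\ trace-preserving approximants; since $\tau$ is faithful, a c.c.p.\ trace-preserving map is automatically u.c.p.

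\textbf{Parts (1) and (2).} For the direct product, fix a finite $F\subset I$, let $p_F := \sum_{i\in F}1_{A_i}\in\bigoplus_{i\in I}A_i\subset B$, and set $B_F := p_F B p_F\oplus\mathbb{C}(1-p_F)\cong\bigoplus_{i\in F}A_i\oplus\mathbb{C}(1-p_F)$, a finite direct sum of Haagerup \Cs-algebras (rescaling a tracial state preserves the property), hence itself Haagerup. The $B_F$ form an increasing family carrying the canonical trace-preserving conditional expectations
\[
E_F(x) := p_F x p_F + \tau(1-p_F)^{-1}\tau\bigl((1-p_F)x(1-p_F)\bigr)(1-p_F),
\]
and a short calculation using $\sum_{i\notin F}c_i\|x_i\|_{\tau_i}^2\to 0$ shows $\bigcup_F B_F$ is $L^2$-dense in $B$, so Lemma \ref{Lem:con} concludes. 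For the tensor product, the finite case is handled by $\bigotimes_i\phi_n^{(i)}$, whose u.c.p., trace-decrease, $L^2$-compactness (tensor of compacts), and pointwise $L^2$-convergence on elementary tensors are routine; the infinite case reduces again by Lemma \ref{Lem:con} via the slice conditional expectations $\mathrm{id}\otimes\bigotimes_{i\notin F}\tau_i$ onto the finite subproducts.

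\textbf{Part (3).} For the reduced free product I would first reduce to finite $I$ by Lemma \ref{Lem:con}, using the canonical trace-preserving conditional expectations $*_{i\in I}(A_i,\tau_i)\to *_{i\in F}(A_i,\tau_i)$ from Voiculescu's construction. For finite $I$ and trace-preserving u.c.p.\ approximants $\phi_n^{(i)}$, Boca's free-product-of-u.c.p.-maps theorem yields $\Phi_n := *_{i\in I}\phi_n^{(i)}$ on $A$. Setting $H_i^\circ := L^2(A_i,\tau_i)\ominus\mathbb{C}$, the standard Wick decomposition
\[
L^2(A,\tau) = \mathbb{C}\Omega\oplus\bigoplus_{n\ge 1}\bigoplus_{i_1\ne i_2\ne\cdots\ne i_n}H_{i_1}^\circ\otimes\cdots\otimes H_{i_n}^\circ
\]
diagonalises $\Phi_n$ as $1\oplus\bigoplus(\phi_n^{(i_1)})^\circ\otimes\cdots\otimes(\phi_n^{(i_n)})^\circ$, and pointwise $L^2$-convergence to the identity follows by checking on reduced words.

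\textbf{The main obstacle} is the $L^2$-compactness of $\Phi_n$: each summand is a finite tensor of compact operators, but without a strict contraction bound on the $(\phi_n^{(i)})^\circ$, the direct sum over unboundedly long reduced words need not be compact. The standard remedy is pre-averaging: replace each $\phi_n^{(i)}$ by
\[
\psi_{n,c}^{(i)} := c\,\phi_n^{(i)} + (1-c)\,\tau_i(\cdot)\,1_{A_i},\qquad c\in(0,1),
\]
which preserves u.c.p., trace-preservation, and $L^2$-compactness while forcing $\|(\psi_{n,c}^{(i)})^\circ\|_{\mathrm{op}}\le c$. Length-$n$ summands of the corresponding free product map then have operator norm at most $c^n$, and since $I$ is finite only finitely many reduced-word summands can exceed any $\varepsilon>0$ in norm, so the free product is $L^2$-compact. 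Letting $c\to 1$ jointly with the original approximating indices produces the desired net on $A$.
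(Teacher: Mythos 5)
Your proposal is correct and follows essentially the same route as the paper: parts (2) and (3) reduce to finitely many factors via Lemma \ref{Lem:con} and then combine the approximants factorwise, with exactly the paper's pre-averaging $c\,\phi+(1-c)\,\tau(\cdot)1$ used to bound the off-vacuum restriction by $c$ and make the free product map a $c_0$-sum of compacts. In part (1) you invoke Lemma \ref{Lem:con} with the corners $p_FBp_F\oplus\mathbb{C}(1-p_F)$ where the paper writes down the net $\bigl(\bigoplus_{k\le n}\Phi_{k,j}\bigr)\oplus\bigl(\sum_{k>n}c_k\tau_k\bigr)$ directly, but the composite maps are the same and both cover the ``more generally'' clause.
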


\begin{proof}
(1) We may assume $I=\mathbb{N}.$
For each $n\in\mathbb{N}$, take an approximation net $(\Phi_{n,j})_{j\in J_n}$ of {\rm u.c.p.\null} maps
of the Haagerup property of $(A_n,\tau _n)$.
Replace $J_n$ by $\prod_k{J_k}$ for each $n\in\mathbb{N}$, we may assume all index sets of the nets are the same one, say $J$.
Then for each $n\in\mathbb{N}$ and $j\in J$,
we define a {\rm u.c.p.\null} map $\Psi_{n,j}$ on $\prod_{n\in\mathbb{N}} A_n$ by
\[\Psi_{n,j}:=\left(\bigoplus_{k\leq n}\Phi_{k,j}\right)\oplus\left(\sum_{k>n}c_k\tau_k\right).\]
Then the net $(\Psi_{n,j})_{n,j}$ satisfies the desired condition.
Moreover, each range of $\Psi_{n,j}$ is contained in the unitization of $\bigoplus_{n\in\mathbb{N}} A_n$.
So the second part of the claim also follows.\\
(2)
By the previous lemma, it suffices to consider the case $I=\{1,2\}$.
Let $(A,\tau), (B,\nu)$ be two pairs of \Cs -algebras and faithful tracial states both of which have the Haagerup property.
By assumption and Remark 2.2, we can choose nets of trace-preserving {\rm u.c.p.\null} maps $(\phi_j)_{j\in J}, (\psi_j)_{j\in J}$
which give the Haagerup property of $(A,\tau), (B,\nu)$ respectively.
Then the net $(\phi_j\otimes\psi_j)_{j\in J}$ obviously gives the Haagerup property of $(A\otimes B, \tau\otimes\nu)$. \\
(3)
Similarly, it suffices to consider the case $I=\{1,2\}$.
With the notations as above, we define
\[\tilde{\phi}_{j, k}:=c_k\phi_j+(1-c_k)\tau, \tilde{\psi}_{j, k}:=c_k\psi_j+(1-c_k)\nu\]
for each $j\in J$ and $k\in\mathbb{N}$, where $c_k=1-1/k$.
(For the notion of reduced free product, we refer the reader to \cite[Section 4.7]{BO}.)
We will show the net $(\tilde{\phi}_{j, k}\ast\tilde{\psi}_{j, k})_{(j, k)\in J\times\mathbb{N}}$ gives the Haagerup property of $(A, \tau)\ast(B, \nu)$.
Note that by definition of $\tilde{\phi}$'s and $\tilde{\psi}$'s, it obviously satisfies the conditions listed on Definition \ref{Def:Haa} excepting the $L^2$-compactness.
For the $L^2$-compactness, notice that the GNS-space of the reduced free product $L^2((A, \tau)\ast(B, \nu))$ is canonically isomorphic to the free product $(L^2(A, \tau), 1_A^\tau)\ast(L^2(B, \nu), 1_B^\nu)$ of GNS-spaces.
Then since the restriction of $\tilde{\phi}_{j, k}$ to
$L^2(A, \tau)^o=L^2(A, \tau)\ominus\mathbb{C}1_A^\tau$ has the norm less than or equal to $c_k$ and similarly for $\tilde{\psi}_{j, k}$,
$\tilde{\phi}_{j, k}\ast\tilde{\psi}_{j, k}$ is a $c_0$-direct sum of compact operators as a bounded operator on $(L^2(A, \tau), 1_A^\tau)\ast(L^2(B, \nu), 1_B^\nu)$ by definition.
Therefore it is a compact operator, as desired.
\end{proof}

Next we study the permanence of the Haagerup property under the reduced crossed product construction.
As we will see in Theorem \ref{Thm:Aff}, this is no longer true in general.
However, in the following AF-setting, we have the permanence property.
This is pointed out by the referee.

\begin{Thm}\label{Thm:per2}
Let $\Gamma$ be a group with the Haagerup property acting on a unital \Cs -algebra $A$.
Assume the following hold.
\begin{itemize}
\item $A$ has a $\Gamma$-invariant faithful tracial state $\tau$.
\item $\Gamma$ is the union of an increasing net $(\Gamma_i)_{i\in I}$ of subgroups.
\item There is an increasing net $(A_i)_{i\in I}$ of finite dimensional \Cs -subalgebras of $A$, 
whose union is dense in $A$ with respect to the $L^2$-norm determined by $\tau$, and each $A_i$ is $\Gamma_i$-invariant.
\end{itemize}
Then the reduced crossed product $A\rtimes _r\Gamma$ has the Haagerup property.
\end{Thm}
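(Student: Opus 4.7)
The plan is to realize $A \rtimes _r \Gamma$ as the $L^2$-closure of the increasing family of \Cs -subalgebras $A _i \rtimes _r \Gamma _i$ and then invoke Lemma \ref{Lem:con}. Let $\tilde\tau$ denote the canonical trace on $A \rtimes _r \Gamma$ given by $\tilde\tau(\sum _g a _g u _g) = \tau(a _e)$; by construction its restriction to $A _i \rtimes _r \Gamma _i$ agrees with the canonical trace $\tilde\tau _i$ on that subalgebra, and the subalgebras form an increasing family.

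First, I would verify that each pair $(A _i \rtimes _r \Gamma _i, \tilde\tau _i)$ has the Haagerup property. The Haagerup property passes to subgroups (restricting a $C _0$ positive definite function on $\Gamma$ to $\Gamma _i$ yields a $C _0$ positive definite function, and pointwise convergence to $1$ is preserved), so each $\Gamma _i$ is Haagerup. Since $A _i$ is finite dimensional, $\Gamma _i$-invariant, and carries the $\Gamma _i$-invariant faithful tracial state $\tau | _{A _i}$, Theorem \ref{Thm:Don} (3) applies directly.

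Second, I would check that $\bigcup _i (A _i \rtimes _r \Gamma _i)$ is $L ^2$-dense in $A \rtimes _r \Gamma$. For a monomial $a u _g$ with $a \in A$ and $g \in \Gamma$, pick $j$ with $g \in \Gamma _j$ and, using the $L ^2$-density of $\bigcup _i A _i$ in $A$, choose $a ' \in A _k$ for some $k \geq j$ with $\|a - a '\| _\tau$ small; then $a ' u _g \in A _k \rtimes _r \Gamma _k$ and $\|a u _g - a ' u _g\| _{\tilde\tau} = \|a - a '\| _\tau$. Linearity extends this to the whole algebraic crossed product.

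The core step is producing trace-preserving conditional expectations $\mathbb{E} _i \colon A \rtimes _r \Gamma \to A _i \rtimes _r \Gamma _i$. I would build $\mathbb{E} _i$ as a composition. On the one hand, the inclusion $\Gamma _i \subset \Gamma$ gives the standard trace-preserving conditional expectation $G _i \colon A \rtimes _r \Gamma \to A \rtimes _r \Gamma _i$, acting on the algebraic level by $\sum _g a _g u _g \mapsto \sum _{g \in \Gamma _i} a _g u _g$. On the other hand, because $A _i$ is finite dimensional, the standard formula for the $\tau$-preserving conditional expectation onto $A _i$ in terms of matrix units defines an honest \Cs -algebraic map $E _i \colon A \to A _i$. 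For $g \in \Gamma _i$, the map $\alpha _g \circ E _i \circ \alpha _{g ^{-1}}$ is again a $\tau$-preserving conditional expectation from $A$ onto $A _i$ (using $\Gamma _i$-invariance of $A _i$ and $\Gamma$-invariance of $\tau$), so by uniqueness it coincides with $E _i$. This $\Gamma _i$-equivariance lets $E _i$ extend to a trace-preserving conditional expectation $\bar E _i \colon A \rtimes _r \Gamma _i \to A _i \rtimes _r \Gamma _i$ via $\sum _g a _g u _g \mapsto \sum _g E _i(a _g) u _g$, and then $\mathbb{E} _i := \bar E _i \circ G _i$ is the required map. Lemma \ref{Lem:con} then delivers the Haagerup property of $(A \rtimes _r \Gamma, \tilde\tau)$. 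The main obstacle is precisely this $\Gamma _i$-equivariance of $E _i$: without it there is no natural extension to the crossed product and the entire scheme collapses. Fortunately, it is a soft consequence of the uniqueness of $\tau$-preserving conditional expectations together with the built-in invariance hypotheses, so the argument should go through cleanly.
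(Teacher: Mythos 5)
Your proposal is correct and follows essentially the same route as the paper: reduce to the subalgebras $A_i\rtimes_r\Gamma_i$ via Theorem \ref{Thm:Don}~(3), build the trace-preserving conditional expectation onto $A_i\rtimes_r\Gamma_i$ by composing the canonical expectation $A\rtimes_r\Gamma\to A\rtimes_r\Gamma_i$ with the extension of the (unique, hence $\Gamma_i$-equivariant) $\tau$-preserving expectation $A\to A_i$, and conclude with Lemma \ref{Lem:con}. The equivariance-by-uniqueness observation you single out as the crux is exactly the step the paper relies on.
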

\begin{proof}
We will show $A\rtimes _r\Gamma$ has the Haagerup property with respect to the canonical extension $\tilde{\tau}$ of $\tau$.
For each $i\in I$, there exists a unique $\tau$-preserving conditional expectation $E_i$ from $A$ onto $A_i$ \cite[Lemma 1.5.11]{BO}.
Notice that, by uniqueness, it must be $\Gamma_i$-equivariant. 
Hence by \cite[Exercise 4.1.4]{BO}, $E_i$ extends to a conditional expectation from $A\rtimes _r\Gamma_i$ onto
$A_i\rtimes _r\Gamma_i$, which preserves $\tilde{\tau}$ by definition.
At the same time, we also have a $\tilde{\tau}$-preserving conditional expectation from $A\rtimes _r\Gamma$ onto $A\rtimes _r\Gamma_i$. Indeed, first represent $A\rtimes _r\Gamma$ on $L^2(A, \tau)\otimes l^2(\Gamma)$ in the canonical way and similarly for $A\rtimes _r\Gamma_i$.
Consider the conditional expectation on $B(L^2(A, \tau)\otimes l^2(\Gamma))$ induced by the projection $p=1\otimes\chi_{\Gamma_i}$.
Then the restriction of it to $A\rtimes _r\Gamma$ gives the desired conditional expectation.
Composing these two maps, we have a $\tilde{\tau}$-preserving conditional expectation from $A\rtimes _r\Gamma$ onto $A_i\rtimes _r\Gamma_i$.
Note that by Theorem \ref{Thm:Don} (3), each $A_i\rtimes _r\Gamma_i$ has the Haagerup property with respect to $\tilde{\tau}$.
Then, since the union of $A_i\rtimes _r\Gamma_i$'s is dense in $A\rtimes _r\Gamma$ with respect to the $L^2$-topology,
Lemma \ref{Lem:con} completes the proof.
\end{proof}

\begin{Rem}
From Theorem \ref{Thm:Aff}, the corresponding result of Theorem \ref{Thm:per2} in the AH-setting is no longer true, even if the index of nets is singleton.
\end{Rem}

\section{\texorpdfstring{An Application of the Haagerup Property for \Cs -algebras}{An Application of the Haagerup Property for C*-algebras}}\label{sec:App}

In this section, we give an application of the Haagerup property for \Cs -algebras.
Our results rely heavily on techniques in von Neumann algebras which trace back to the work of Popa from \cite{Pop}.
Popa's theorem says the Haagerup property is a strong negation of relative property~{\rm (T)} in the context of von Neumann algebras.
We extend this rigidity theorem to the context of \Cs -algebras.

Our theorem does not depend on the tracial states, therefore it is convenient to introduce the following class of \Cs -algebras.
\begin{Def}
Set $\mathcal{H}$ be the class of all \Cs -algebras which has a faithful tracial state $\tau$ with the Haagerup property.

\end{Def}

By the results in Section \ref{sec:Ex}, the class $\mathcal{H}$ is quite large.
It contains all nuclear \Cs -algebras with a faithful tracial state, many exact \Cs -algebras (for example, unital simple exact quasi-diagonal \Cs -algebras),
residually finite dimensional \Cs -algebras with a faithful tracial state, the reduced group \Cs -algebras of groups with the Haagerup property,
and is closed under taking the direct product, the spatial tensor product and the reduced free product.
However, we need to remark that the class $\mathcal{H}$ is not closed under taking a quotient, even if the quotient has a faithful tracial state.
To see this, consider the full group \Cs -algebra ${\rm C}^*(F_\infty )$ of the free group $F_\infty$ of countably many generators.
Then it is residually finite dimensional by Choi's theorem \cite[Theorem 7]{Choi}.
From this and separability of ${\rm C}^*(F_\infty )$, it is contained in the class $\mathcal{H}$ by Corollary \ref{Cor:nsb} (2).
Note that any unital separable \Cs -algebra arises as a quotient of ${\rm C}^*(F_\infty )$,
and as we soon see in Corollary \ref{Cor:Nem}, there is a unital separable \Cs -algebra which has a faithful tracial state but is not contained in the class $\mathcal{H}$.

The following theorem, due to Popa \cite{Pop}, will play a key role in deriving our application.

\begin{Thm}[Popa {\cite[Theorem 5.4 (1)]{Pop}}]\label{Thm:Popa}
Let $M$ be a von Neumann algebra with a faithful normal tracial state, $B$ a von Neumann subalgebra of $M$.
If $M$ has the Haagerup property and the pair $(M,B)$ has relative property~{\rm (T)},
then $B$ is not diffuse.
\end{Thm}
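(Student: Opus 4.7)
The plan is a proof by contradiction. Suppose $B$ is diffuse. The strategy is to use the Haagerup property of $M$ to manufacture a net of pointed Hilbert $M$-bimodules that satisfy the hypotheses of relative property~(T), and then to show that compactness together with diffuseness forces any $B$-central unit vector in these bimodules to be orthogonal to the basepoint, contradicting the conclusion furnished by relative property~(T).

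First I would use the Haagerup property to select a net $(\phi_i)_i$ of trace-preserving normal u.c.p.\ maps on $M$ whose $L^2$-extensions $T_i$ are compact and satisfy $\|\phi_i(x)-x\|_\tau \to 0$ for each $x \in M$. For each $i$ I would form the Stinespring-GNS pointed Hilbert $M$-bimodule $(H_i,\xi_i)$ by separation-completion of $M \odot M$ under the sesquilinear form
\[
\langle x_1 \otimes y_1, x_2 \otimes y_2\rangle_i := \tau(y_2^{*}\phi_i(x_2^{*}x_1)y_1),
\]
with $M$ acting by left and right multiplication on the respective tensor factors and $\xi_i := [1 \otimes 1]$. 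The identities $\phi_i(1)=1$ and $\tau \circ \phi_i = \tau$ give $\langle\,\cdot\,\xi_i,\xi_i\rangle_i = \tau = \langle \xi_i\,\cdot\,,\xi_i\rangle_i$, and a short expansion yields
\[
\|x\xi_i - \xi_i x\|_i^{2} = 2\tau(x^{*}x) - 2\operatorname{Re}\langle T_i\hat{x},\hat{x}\rangle_\tau \xrightarrow[i]{} 0
\]
for each fixed $x \in M$. Fixing $\epsilon \in (0,\sqrt{2})$, relative property~(T) of $(M,B)$ then produces, for every sufficiently large $i$, a $B$-central unit vector $\eta_i \in H_i$ with $\|\eta_i - \xi_i\|_i < \epsilon$.

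To close the argument I would show $\langle \eta_i,\xi_i\rangle_i = 0$ for each such $i$, which contradicts the closeness since $\|\eta_i - \xi_i\|_i^{2} = 2 - 2\operatorname{Re}\langle \eta_i,\xi_i\rangle_i$. Using diffuseness of $B$, pick a sequence $(u_k) \subset \mathcal{U}(B)$ with both $\widehat{u_k}$ and $\widehat{u_k^{*}}$ tending weakly to $0$ in $L^2(M,\tau)$. Since $\eta_i$ is $B$-central and the left and right actions of unitaries on $H_i$ are unitary, for every $k$
\[
\langle \eta_i,\xi_i\rangle_i = \langle u_k\eta_i u_k^{*},\xi_i\rangle_i = \langle \eta_i, u_k^{*}\xi_i u_k\rangle_i.
\]
A direct calculation using the bimodule inner product and the trace property gives, for $a,b \in M$,
\[
\langle u_k^{*}\xi_i u_k, a \otimes b\rangle_i = \langle T_i(a^{*}\widehat{u_k^{*}}),\widehat{bu_k^{*}}\rangle_\tau,
\]
and the compactness of $T_i$ applied to the weakly null sequence $a^{*}\widehat{u_k^{*}}$, together with the bound $\|\widehat{bu_k^{*}}\|_\tau = \|b\|_\tau$, sends this to $0$ as $k \to \infty$. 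Density of elementary tensors and the uniform bound $\|u_k^{*}\xi_i u_k\|_i = 1$ upgrade this to $u_k^{*}\xi_i u_k \rightharpoonup 0$ in $H_i$, whence $\langle \eta_i,\xi_i\rangle_i = 0$ and $\|\eta_i - \xi_i\|_i = \sqrt{2}$, the desired contradiction.

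The main obstacle is precisely the weak-null convergence $u_k^{*}\xi_i u_k \rightharpoonup 0$ in $H_i$: this is the one place where the compactness hypothesis on the approximating maps in the Haagerup property is genuinely used, converting the weak nullity of $\widehat{u_k^{*}}$ in $L^2(M,\tau)$ into the strong nullity of $T_i(a^{*}\widehat{u_k^{*}})$ that drives the bimodule inner product to zero. Everything else is formal bimodule bookkeeping, the almost-commutation estimate, and a one-line parallelogram computation.
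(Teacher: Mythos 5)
Your argument is correct, and in fact the paper offers no proof of this statement at all: it is quoted from Popa's paper, with Remarks \ref{Rem:Popa1} and \ref{Rem:Popa2} serving only to explain why Popa's proof survives without factoriality or the type ${\rm I\hspace{-.1em}I}_1$ assumption. What you have written is essentially the standard Popa-style argument for the case ``$Q=\mathbb{C}$'' of his Theorem 5.4: the GNS--Stinespring bimodules $(H_i,\xi_i)$ of unital trace-preserving Haagerup approximants satisfy Popa's hypotheses exactly (indeed $\langle\cdot\,\xi_i,\xi_i\rangle=\langle\xi_i\,\cdot,\xi_i\rangle=\tau$ on the nose), and the compactness of $T_i$ converts a weakly null sequence of Haar-type unitaries $u_k$ in $B$ --- whose existence is precisely what diffuseness of $B$ provides --- into weak nullity of $u_k^*\xi_i u_k$ in $H_i$, forcing any $B$-central vector to be orthogonal to $\xi_i$ and hence at distance $\sqrt{2}$ from it. All the individual computations (the formula $\|x\xi_i-\xi_i x\|_i^2=2\tau(x^*x)-2\operatorname{Re}\langle T_i\hat x,\hat x\rangle_\tau$, the identity $\langle\eta_i,\xi_i\rangle_i=\langle\eta_i,u_k^*\xi_i u_k\rangle_i$, and the reduction of $\langle u_k^*\xi_i u_k,a\otimes b\rangle_i$ to $\langle T_i(a^*\widehat{u_k^*}),\widehat{bu_k^*}\rangle_\tau$) check out. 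Two points you use silently but which are standard: (i) the left and right actions on $H_i$ must be normal for Popa's definition to apply, which holds because $\tau$ is normal and a $\tau$-preserving positive map on a von Neumann algebra with a faithful normal trace is automatically normal; and (ii) the relative property~{\rm (T)} of $(M,B)$ is defined with respect to \emph{some} faithful normal trace while the Haagerup property may a priori be witnessed by another, so one should invoke Jolissaint's trace-independence of the von Neumann algebraic Haagerup property to work with a single $\tau$ throughout. With these remarks your proof is complete and, importantly, works verbatim for an arbitrary finite von Neumann algebra with a faithful normal tracial state, which is exactly the generality the paper requires in Remark \ref{Rem:Popa1}.
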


\noindent Here we need two comments.
\begin{Rem}\label{Rem:Popa1}
In the statement of \cite[Theorem 5.4]{Pop}, it is only considered the case $M$ is a type {\rm I\hspace{-.1em}I$_1$} factor.
However, in his proof, we do not need either the factoriality of $M$ or the assumption $M$ is of type {\rm I\hspace{-.1em}I$_1$},
since \cite[Proposition 4.1]{Pop} is proved for any finite von Neumann algebras with a faithful normal tracial state.
\end{Rem}
\begin{Rem}\label{Rem:Popa2}
If we cut $M$ by the projection corresponding to the diffuse part of $B$,
then the resulting von Neumann algebra still has the Haagerup property and the resulting pair also has relative property~{\rm (T)} \cite[Proposition 4.7 (2)]{Pop}.
From this, if $B$ has a nonzero diffuse direct summand,
then this contradicts Theorem \ref{Thm:Popa}.
Consequently, $B$ must be a direct sum of matrix algebras.
\end{Rem}

We now apply Popa's theorem to the context of \Cs -algebras.
The proofs of the following lemmas are straightforward, so we only give sketches of the proofs.
\begin{Lem}\label{Lem:GNSH}
Let $(A,\tau)$ be a pair of a unital \Cs -algebra and a faithful tracial state on $A$.
Let $\pi_\tau$ be the GNS-representation of $\tau$.
If the pair $(A,\tau)$ has the Haagerup property,
then so does $(\pi _\tau(A)'',\tau)$.
\end{Lem}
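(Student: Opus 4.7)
The plan is to take a Haagerup net $(\phi_i)_{i\in I}$ of u.c.p.\ $\tau$-decreasing maps on $A$ as in Definition \ref{Def:Haa}, extend each $\phi_i$ to a normal u.c.p.\ map $\tilde{\phi}_i$ on $M:=\pi_\tau(A)''$, and verify that the net $(\tilde{\phi}_i)_{i\in I}$ witnesses the Haagerup property of $(M,\tau)$, where $\tau$ denotes also its unique normal extension to $M$. Throughout, I would identify $L^2(A,\tau)$ with $L^2(M,\tau)$ canonically and write $\hat{1}$ for the cyclic vector.

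First, I would construct $\tilde{\phi}_i$. Since $\phi_i$ is u.c.p.\ and decreases $\tau$, the Kadison--Schwarz inequality produces a contraction $T_i$ on $L^2(A,\tau)=L^2(M,\tau)$ extending $a\hat{1}\mapsto\phi_i(a)\hat{1}$; by assumption $T_i$ is compact. For any $x\in M$, Kaplansky density provides a bounded net $(a_\lambda)\subset A$ converging to $x$ strongly and hence in $\|\cdot\|_\tau$. The net $(\phi_i(a_\lambda))$ is then bounded in $M$, so has a weak$^{\ast}$-cluster point, which I call $\tilde{\phi}_i(x)$; since $\tilde{\phi}_i(x)\hat{1}=T_i(x\hat{1})$ and the map $m\mapsto m\hat{1}$ is injective by faithfulness of $\tau$, this value does not depend on any of the choices. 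A standard argument (bounded map defined by a weak$^{\ast}$-limit on a $\sigma$-weakly dense unital C$^\ast$-subalgebra, pinned down by its action on $\hat{1}$) then gives that $\tilde{\phi}_i\colon M\to M$ is a normal u.c.p.\ extension of $\phi_i$.

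Next I would verify the three conditions of Definition \ref{Def:Haa} for $\tilde{\phi}_i$. The inequality $\tau\circ\tilde{\phi}_i\leq\tau$ holds on $A_+$ by hypothesis, and both sides are normal on $M$, so it extends to $M_+$. For pointwise $L^2$-convergence, given $x\in M$ and $\varepsilon>0$, Kaplansky density yields a bounded $a\in A$ with $\|x-a\|_\tau<\varepsilon$; then
\[\|\tilde{\phi}_i(x)-x\|_\tau\leq\|\tilde{\phi}_i(x-a)\|_\tau+\|\phi_i(a)-a\|_\tau+\|a-x\|_\tau\leq 2\varepsilon+\|\phi_i(a)-a\|_\tau,\]
and the last term tends to $0$ by the Haagerup property of $(A,\tau)$. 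Finally, the $L^2$-extension of $\tilde{\phi}_i$ agrees with $T_i$ on the dense subspace $A\hat{1}\subset L^2(M,\tau)$ and is bounded, so it equals $T_i$ and in particular is compact.

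The only delicate point is the construction and normality of $\tilde{\phi}_i$, but this is the standard procedure for extending a $\tau$-decreasing u.c.p.\ map from a $\sigma$-weakly dense C$^\ast$-subalgebra to the enveloping finite von Neumann algebra, and each verification above reduces to a one-line computation. Since the author announces only a sketch, I would expect the written proof to amount to little more than ``extend each $\phi_i$ to a normal u.c.p.\ map on $M$ using its $L^2$-contractivity and Kaplansky density, and check the three conditions on an $L^2$-dense subalgebra.''
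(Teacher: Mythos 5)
Your proposal is correct and follows essentially the same route as the paper's (sketched) proof: extend each approximating u.c.p.\ map to a normal u.c.p.\ map on $\pi_\tau(A)''$ via its induced $L^2$-contraction and Kaplansky density, then check the three conditions of Definition \ref{Def:Haa} on the $L^2$-dense copy of $A$. The only cosmetic difference is that the paper first normalizes the maps to be trace-preserving (Remark 2.3) while you work directly with trace-decreasing ones; both variants go through.
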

\begin{proof}[Sketch of the proof]
Note that any trace-preserving {\rm u.c.p.\null} map on $A$ extends
to a trace-preserving {\rm u.c.p.\null} map on the GNS-closure,
which is $L^2$-compact if the original one is.
The extensions of approximation maps of the Haagerup property for $(A,\tau)$
establish the Haagerup property of $(\pi _\tau(A)'',\tau)$.
\end{proof}

\begin{Lem}\label{Lem:GNST}
Let $A$ be a \Cs -algebra, $B$ be a \Cs -subalgebra of $A$ and $\tau$ be a tracial state on $A$.
If the pair $(A,B)$ has relative property~{\rm (T)} {\rm (}in the sense of Leung-Ng{\rm )}, then the pair $\left(\pi _\tau (A)'',\pi _\tau (B)''\right)$ of GNS-closures has relative property~{\rm (T)} in the sense of Popa.
\end{Lem}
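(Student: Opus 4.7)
The plan is to derive Popa's (von Neumann algebraic) condition from Leung-Ng's (C*-algebraic) condition by restricting any Hilbert $M$-bimodule, where $M:=\pi_\tau(A)''$, to a Hilbert $A$-bimodule via the canonical $*$-homomorphism $A \to \pi_\tau(A) \subset M$. Popa's formulation imposes the extra tracial hypotheses $\|\langle \cdot \xi, \xi\rangle - \tau\| < \delta$ and $\|\langle \xi \cdot, \xi\rangle - \tau\| < \delta$ on the almost-central vector while demanding the same approximation conclusion, so it is a priori weaker than Leung-Ng's, and these extra hypotheses need not be used at all. Concretely, given $\epsilon > 0$ I would invoke Leung-Ng's condition to produce $\delta > 0$ and a finite $Q_0 \subset A$, and take as Popa parameters $\delta$ itself together with the finite set $Q := \pi_\tau(Q_0) \subset M$, and as the faithful normal tracial state on $M$ the usual vector state determined by the canonical cyclic vector $1_A^\tau \in L^2(A,\tau)$.

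Now given any Hilbert $M$-bimodule $H$ (with commuting normal left and right $M$-actions) and a unit vector $\xi \in H$ satisfying Popa's hypotheses for these parameters, pulling both actions back along $\pi_\tau$ equips $H$ with a Hilbert $A$-bimodule structure in the sense of Leung-Ng for which $\|x\xi - \xi x\| < \delta$ holds for all $x \in Q_0$. Leung-Ng's condition then produces a $B$-central unit vector $\xi_0 \in H$ with $\|\xi_0 - \xi\| < \epsilon$; by construction $\xi_0$ is $\pi_\tau(B)$-central when viewed as a vector in the original $M$-bimodule.

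The remaining step is to upgrade $\pi_\tau(B)$-centrality of $\xi_0$ to $\pi_\tau(B)''$-centrality, and this is where normality of the $M$-actions is essential. For any $m \in \pi_\tau(B)''$ with $\|m\| \le 1$, Kaplansky density provides a bounded net $(m_\lambda) \subset \pi_\tau(B)$ with $m_\lambda \to m$ in the strong operator topology, and normality of the left and right $M$-actions on $H$ then gives $m_\lambda \xi_0 \to m \xi_0$ and $\xi_0 m_\lambda \to \xi_0 m$ in $H$. Passing to the limit in $m_\lambda \xi_0 = \xi_0 m_\lambda$ yields $m \xi_0 = \xi_0 m$, which is the desired $\pi_\tau(B)''$-centrality.

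I do not foresee a serious obstacle here. Popa's extra tracial hypotheses on $\xi$ are never consumed and only serve to weaken the condition we must verify, and the Kaplansky-density upgrade of bimodule centrality from a C*-subalgebra to its von Neumann closure is entirely standard. The only small point to watch is that pulling commuting normal $M$-actions back along the $*$-homomorphism $\pi_\tau$ yields commuting bounded $*$-representations of $A$, which is automatic.
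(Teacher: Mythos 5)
Your proposal is correct and follows essentially the same route as the paper's (sketched) proof: pull the Hilbert $M$-bimodule back to an $A$-bimodule, apply Leung--Ng's condition to get a $\pi_\tau(B)$-central vector near $\xi$, and use normality of the bimodule actions together with strong density of $\pi_\tau(B)$ in $\pi_\tau(B)''$ to upgrade to $\pi_\tau(B)''$-centrality. Your additional observations --- that Popa's extra tracial hypotheses are simply never used, and that one takes the canonical vector trace on $\pi_\tau(A)''$ --- are exactly the implicit content of the paper's two-line sketch.
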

\begin{proof}[Sketch of the proof]
Since the left and right actions of a Hilbert bimodule of a von Neumann algebra $M$ are normal,
for any $\sigma$-strongly dense subset $S$ of $M$, any $S$-central vector of $H$ is indeed $M$-central.
From this, our claim follows easily.
\end{proof}

Now, we obtain the rigidity result, Theorem \ref{Thmint:T}.
\begin{Thm}\label{Thm:T}
Let $A\in\mathcal{H}$,~$B$ be its \Cs -subalgebra. If the pair $(A,B)$ has relative property~{\rm (T)},
then $B$ is residually finite dimensional.
\end{Thm}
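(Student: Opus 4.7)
The plan is to reduce the \Cs-algebraic statement to Popa's von Neumann algebraic theorem by passing to the GNS-closure, and then to translate the conclusion (that the closure of $B$ is atomic with finite-dimensional summands) back into residual finite-dimensionality for $B$ itself.

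First, I would fix a faithful tracial state $\tau$ on $A$ with the Haagerup property, which exists since $A\in\mathcal{H}$. Let $\pi_\tau$ denote the GNS representation and set $M:=\pi_\tau(A)''$ and $N:=\pi_\tau(B)''$. By Lemma \ref{Lem:GNSH}, the pair $(M,\tau)$ still has the Haagerup property (in the von Neumann algebraic sense). By Lemma \ref{Lem:GNST}, the pair $(M,N)$ has relative property~(T) in Popa's sense. Thus the hypotheses of Theorem \ref{Thm:Popa} are satisfied for $(M,N)$.

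Next, I apply Popa's theorem together with Remark \ref{Rem:Popa2}: if $N$ had a nonzero diffuse direct summand, cutting $M$ by the corresponding central projection of $N$ would produce a finite von Neumann algebra with the Haagerup property, a relatively rigid diffuse subalgebra, and a faithful normal tracial state, contradicting Theorem \ref{Thm:Popa}. Hence $N$ is purely atomic of type~{\rm I}, i.e.\ a direct sum $N=\bigoplus_{i\in I}\mathbb{M}_{n_i}$ of matrix algebras (with each $n_i$ finite).

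Finally, to conclude that $B$ itself is residually finite dimensional, observe that since $\tau$ is faithful on $A$, its restriction to $B$ is faithful, and so the restriction $\pi_\tau|_B\colon B\to N$ is an injective $\ast$-homomorphism. Composing $\pi_\tau|_B$ with the coordinate projection $p_i\colon N\to\mathbb{M}_{n_i}$ yields, for each $i\in I$, a finite-dimensional representation $\rho_i$ of $B$, and the family $\{\rho_i\}_{i\in I}$ is jointly faithful because $\bigcap_i\ker\rho_i=\ker(\pi_\tau|_B)=\{0\}$. This is exactly the assertion that $B$ is residually finite dimensional.

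The only substantive obstacle is invoking Popa's theorem in the nonfactorial, possibly nonseparable setting that arises here; this is addressed by Remarks \ref{Rem:Popa1} and \ref{Rem:Popa2}, which explain why factoriality and type~{\rm II}$_1$ can be dropped and why a diffuse summand can be isolated by a central cut. Once these points are in hand, the rest is a direct and essentially formal transfer of information from $N$ back to $B$ via the faithfulness of $\pi_\tau|_B$.
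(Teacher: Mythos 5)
Your proposal is correct and follows essentially the same route as the paper: fix a faithful tracial state with the Haagerup property, pass to the GNS-closures, invoke Lemmas \ref{Lem:GNSH} and \ref{Lem:GNST}, and apply Popa's theorem (with Remarks \ref{Rem:Popa1} and \ref{Rem:Popa2}) to conclude that $\pi_\tau(B)''$ is a direct sum of matrix algebras. The paper stops there and leaves the final translation back to residual finite dimensionality of $B$ implicit; your explicit argument via the injectivity of $\pi_\tau|_B$ and the coordinate projections is the correct way to fill in that last step.
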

\begin{proof}
Choose a faithful tracial state $\tau$ on $A$ with the Haagerup property.
Then by Lemmas \ref{Lem:GNSH} and \ref{Lem:GNST},
the pair $(\pi_\tau(A)'', \tau)$ has the Haagerup property and
the pair $(\pi_\tau(A)'', \pi_\tau(B)'')$ has relative property~{\rm (T)}.
Hence, by Popa's theorem, $\pi_\tau(B)''$ is a direct sum of matrix algebras.
\end{proof}
\begin{Rem}
Combining the proof above with Theorem \ref{Thm:nuc} (and the fact that nuclearity passes to a quotient),
we have a generalization of \cite[Proposition 12]{Bek} as follows.\\
{\it Let $(A, \tau)$ be a pair of a \Cs -algebra and a faithful tracial state on $A$ that has both the Haagerup property and property~{\rm (T)}.
Then $L^2(A, \tau)$ decomposes as a direct sum of finite dimensional $A$-submodules.}
\end{Rem}\

Here we revisit the rigidity theorem of Robertson \cite[Theorem C]{Rob}.

\begin{Cor}\label{Cor:Nem}
Let $\Gamma$ be a property~{\rm (T)} group, $A\in\mathcal{H}$.
Then any unitary representation of $\Gamma$ on $A$ is weakly equivalent
to a direct sum of finite dimensional representations.
In particular, if $\Gamma$ is an infinite property~{\rm (T)} group,
then there is no nonzero $\ast$-homomorphism
from the reduced group \Cs -algebra ${\rm C}_r^*(\Gamma)$ into $A$.
\end{Cor}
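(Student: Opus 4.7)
The plan is to exploit Theorem \ref{Thm:T}. Given a unitary representation $u\colon \Gamma \to U(A)$, I would extend it to the $*$-homomorphism $\tilde{u}\colon {\rm C}^*(\Gamma) \to A$ and set $B := \tilde{u}({\rm C}^*(\Gamma))$. The main task is to verify that the pair $(A, B)$ has relative property~{\rm (T)}, so that Theorem \ref{Thm:T} forces $B$ to be residually finite dimensional; the desired weak equivalence should then follow by pulling back a separating family of finite dimensional representations of $B$ along $\tilde{u}$.

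For the relative property~{\rm (T)} of $(A, B)$: since $\Gamma$ has property~{\rm (T)}, so does ${\rm C}^*(\Gamma)$, and by the quotient-stability in Proposition \ref{Prop:PerT}~(1), $B$ inherits property~{\rm (T)} as a quotient of ${\rm C}^*(\Gamma)$. Because the finite subset $Q$ in the definition of relative property~{\rm (T)} may be taken inside $B \subset A$, and every Hilbert $A$-bimodule restricts to a Hilbert $B$-bimodule, the property~{\rm (T)} constants for $B$ directly witness relative property~{\rm (T)} of $(A, B)$. Combined with $A \in \mathcal{H}$, Theorem \ref{Thm:T} yields that $B$ is residually finite dimensional. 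Choosing a separating family $\sigma_n\colon B \to \mathbb{M}_{k_n}$ of finite dimensional representations, I would set $\rho_n := \sigma_n \circ \tilde{u}|_\Gamma$; these are finite dimensional unitary representations of $\Gamma$, and their extensions to ${\rm C}^*(\Gamma)$ satisfy $\ker \bigl(\bigoplus_n \widetilde{\rho_n}\bigr) = \tilde{u}^{-1}\bigl(\bigcap_n \ker \sigma_n\bigr) = \ker \tilde{u}$, establishing weak equivalence of $u$ with $\bigoplus_n \rho_n$.

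For the ``in particular'' clause, a nonzero $*$-homomorphism $\phi\colon {\rm C}_r^*(\Gamma) \to A$ would give the representation $u_g := \phi(\lambda_g)$ with $\tilde{u}$ factoring through ${\rm C}_r^*(\Gamma)$. Applying the first part produces finite dimensional representations $\rho_n$ of $\Gamma$ whose extensions to ${\rm C}^*(\Gamma)$ also vanish on $\ker({\rm C}^*(\Gamma) \twoheadrightarrow {\rm C}_r^*(\Gamma))$; equivalently each $\rho_n$ is weakly contained in $\lambda_\Gamma$. The main obstacle is the classical fact that an infinite property~{\rm (T)} group admits no nonzero finite dimensional unitary representation weakly contained in $\lambda_\Gamma$, which I would extract from Fell's absorption: $\rho_n \otimes \bar{\rho}_n$ is weakly contained in $\lambda_\Gamma \otimes \bar{\rho}_n$, which is a direct sum of $\dim \rho_n$ copies of $\lambda_\Gamma$, and contains the trivial representation as a subrepresentation; hence $1_\Gamma$ would be weakly contained in $\lambda_\Gamma$, making $\Gamma$ amenable and, together with property~{\rm (T)}, finite. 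Consequently each $\rho_n = 0$, so $\tilde{u} = 0$, contradicting $\phi \neq 0$.
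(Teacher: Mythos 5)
Your proposal is correct and follows essentially the same route as the paper: property~(T) of ${\rm C}^*(\Gamma)$ passes to the quotient $B=\tilde{u}({\rm C}^*(\Gamma))\subset A$, Theorem \ref{Thm:T} forces $B$ to be residually finite dimensional, and the absence of nonzero finite dimensional representations of ${\rm C}_r^*(\Gamma)$ for $\Gamma$ infinite with property~(T) gives the last claim. You merely make explicit two points the paper leaves implicit (that property~(T) of $B$ yields relative property~(T) of $(A,B)$ by restricting bimodules, and the Fell absorption argument behind ``${\rm C}_r^*(\Gamma)$ has a finite dimensional representation iff $\Gamma$ is amenable''), and both fillers are sound.
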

\begin{proof}
By Leung-Ng's theorem, the full group \Cs -algebra ${\rm C}^*(\Gamma)$ of $\Gamma$ has property~{\rm (T)}.
Since property~{\rm (T)} passes to a quotient (Proposition \ref{Prop:PerT}), for any representation $\pi$ of $\Gamma$ on $A$,
the \Cs -subalgebra of $A$ generated by the image of $\pi$, which is isomorphic to a quotient of the full group \Cs -algebra of $\Gamma$, has property~{\rm (T)}.
Since it is a \Cs -subalgebra of a \Cs -algebra in the class $\mathcal{H}$,
it is residually finite dimensional by Theorem \ref{Thm:T}.
This proves our first claim.
For the last statement, recall the reduced group \Cs -algebra has a finite dimensional representation if and only if the group is amenable.
\end{proof}
\begin{Rem}\label{Rem:Nem}
By the proof of Theorem \ref{Thm:T} and Corollary \ref{Cor:Nem}, for any faithful tracial state $\tau$ on $A$ with the Haagerup property and a unitary representation of a group $\Gamma$ with property~{\rm (T)} on $A$,
the induced unitary representation of $\Gamma$ on $L^2(A, \tau)$ is equivalent
to a direct sum of finite dimensional representations.
The original statement of Robertson's theorem \cite[Theorem C]{Rob} follows from this.
\end{Rem}
\begin{Rem}
Certainly, the last assertion of Corollary \ref{Cor:Nem} still holds if $\Gamma$ is a group which has a non-amenable subgroup $\Lambda$
such that the pair $(\Gamma, \Lambda)$ has relative property~{\rm (T)}.
However, we do not know a non-trivial example of such group.
That is, the case $\Gamma$ does not contain an infinite property~{\rm (T)} subgroup.
A list of groups which might satisfy the above condition is given in the book \cite[Chapter 7]{BHV}.

\end{Rem}
\begin{Rem}
Gromov \cite{Gro} constructs a property~{\rm (T)} group without nontrivial finite dimensional representations.
(See also \cite[p.553 Remarks (2)]{Rob}.)
If $\Gamma$ is such a group and $A\in\mathcal{H}$,
then by Corollary \ref{Cor:Nem}, there is no nonzero
group-homomorphism from $\Gamma$ into the unitary group $U(A)$ of $A$.
This shows the group structure (without topological information) of the unitary group $U(A)$ of a unital \Cs -algebra $A$
sometimes remembers the information that $A$ is not contained in the class $\mathcal{H}$
(e.g., the case $A={\rm C}_r^*(\Gamma)$ for a group $\Gamma$ as above).
\end{Rem}
\begin{Rem}
The obstruction of the Haagerup property to property~{\rm (T)} given in Theorem \ref{Thm:T} is the best possible form.
There is an infinite dimensional property~{\rm (T)} \Cs -algebra which is contained in the class $\mathcal{H}$.
Indeed, the following holds.
\end{Rem}
\begin{Prop}\label{Prop:RFDT}
If $A$ is a unital \Cs -algebra which is residually finite dimensional with property~{\rm (T)} and a faithful tracial state,
then $A$ is contained in the class $\mathcal{H}$.
\end{Prop}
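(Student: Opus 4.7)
My proof would be essentially immediate from results already established earlier in the paper. The plan is to invoke Corollary~\ref{Cor:nsb}~(2), which asserts that any unital residually finite dimensional \Cs-algebra admitting a faithful tracial state automatically admits a (possibly different) faithful tracial state with the Haagerup property. Since the algebra $A$ in the statement satisfies both hypotheses of that corollary, it follows at once that $A\in\mathcal{H}$. The property~(T) hypothesis plays no role in the proof itself; it is retained in the statement only because the genuine content of Proposition~\ref{Prop:RFDT} is to exhibit \emph{infinite dimensional} examples of property~(T) \Cs-algebras inside $\mathcal{H}$, thereby demonstrating that the rigidity conclusion of Theorem~\ref{Thm:T} cannot be sharpened to force the ambient algebra itself to be finite dimensional.

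For the reader's convenience I would also recall the mechanism underlying Corollary~\ref{Cor:nsb}~(2). Using the RFD hypothesis, one chooses a separating family $\{\pi_n\}$ of finite dimensional representations of $A$ together with strictly positive weights $c_n$ satisfying $\sum_n c_n=1$, and sets $\tau:=\sum_n c_n\operatorname{tr}\circ\pi_n$; this is a faithful tracial state whose GNS-representation is a subrepresentation of $\bigoplus_n\pi_n$, so that $\pi_\tau(A)''$ is a type~I, hence injective, von Neumann algebra. One then copies the proof of Theorem~\ref{Thm:nuc} verbatim, with injectivity of $\pi_\tau(A)''$ replacing its AFD property in the appeal to Lemma~\ref{Lem:CE}, to produce the required net of finite rank, $\tau$-decreasing, $L^2$-compact u.c.p.\ maps on $A$ witnessing the Haagerup property. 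I do not foresee any genuine obstacle, as every ingredient is already in place in Section~\ref{sec:Ex}; the only mild point to keep in mind is that the faithful tracial state witnessing the Haagerup property of $A$ need not be the one originally supplied with $A$.
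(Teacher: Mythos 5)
Your proof is correct, and the paper itself concedes the point: the remark preceding the proposition states explicitly that it ``is a special case of Corollary \ref{Cor:nsb} (2)'', so invoking that corollary settles the claim, and your observation that the property~(T) hypothesis is then logically superfluous is accurate. However, the paper deliberately takes a different, more elementary route, precisely because Corollary \ref{Cor:nsb} (2) rests on Theorem \ref{Thm:nuc}, whose proof passes through Connes's theorem that injective implies AFD (via Lemma \ref{Lem:CE}) together with a delicate approximation argument to pull the finite-rank maps back into $A$. Instead, the paper uses property~(T) in an essential way: embedding $A$ into $\prod_i\mathbb{M}_{d_i}$ via a complete family of finite dimensional irreducible representations, it invokes Brown's Kazhdan projections \cite[Theorem 3.4]{Bro} to conclude that each unit $1_{\mathbb{M}_{d_i}}$, and hence by irreducibility each summand $\mathbb{M}_{d_i}$, already lies in $A$; the faithful tracial state then forces the index set to be countable, and Theorem \ref{Thm:per} (1) applies directly. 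What the paper's argument buys is independence from the heavy von Neumann algebraic machinery (and, as a by-product, the structural fact that $A$ contains $\bigoplus_i\mathbb{M}_{d_i}$); what your argument buys is brevity and the stronger realization that RFD plus a faithful trace alone suffices. One caveat on your sketch of the mechanism behind Corollary \ref{Cor:nsb} (2): forming $\tau=\sum_n c_n\operatorname{tr}\circ\pi_n$ requires a \emph{countable} separating family of finite dimensional representations, which is automatic for separable $A$ but not obviously so in general; the paper's Kazhdan-projection argument is exactly what supplies countability of the family in the property~(T) setting.
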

\noindent Before the proof, we need a comment.
Although this is a special case of Corollary \ref{Cor:nsb} (2), we prefer to give an independent proof, which is much more elementally, by using a result of Brown about property~{\rm (T)} \Cs -algebras from \cite{Bro}.

\begin{proof}
Let $A$ be as above.
Let $\{\pi_i\}_{i\in I}$ be a complete representation system of the set of all equivalent classes of
finite dimensional irreducible representations of $A$.
Then $\bigoplus_{i\in I}\pi_i$ is a faithful representation of $A$ by assumption.
Hence we can regard $A$ as a unital \Cs -subalgebra of $\prod_{i\in I}{\mathbb{M}_{d_i}}$, where $d_i$ is the dimension of $\pi_i$.
Then by the existence of Kazhdan projections \cite[Theorem 3.4]{Bro},
the unit of the $i$th direct summand $1_{\mathbb{M}_{d_i}}$ is contained in $A$ for all $i\in I$.
Then, by irreducibility of $\pi_i$,~$i$th direct summand $\mathbb{M}_{d_i}$ is contained in $A$ for all $i\in I$.
Hence $\bigoplus_{i\in I}{\mathbb{M}_{d_i}}$ is contained in $A$.
Then by the existence of a faithful tracial state, $I$ must be countable.
Hence $A$ is contained in the class $\mathcal{H}$ by Theorem \ref{Thm:per} (1).
\end{proof}
Here we give an infinite dimensional example of \Cs -algebra which has both property (T) and the Haagerup property.
\begin{Exm}\label{Exm:SLn}
Let $n\geq 3.$
On the group algebra $\mathbb{C}[\SL _n({\mathbb{Z}})]$ of $\SL _n({\mathbb{Z}})$, define the \Cs -seminorm $\|\cdot\|_{\rm fin}$ as follows:
\[\| x\|_{\rm{fin}}:=\sup\{\mbox{ }\| \pi(x)\|\mbox{ }|\mbox{ }\pi \mbox{ {\rm is a finite representation of }} \SL _n({\mathbb{Z}})\}.\]
Then define the \Cs -algebra ${\rm C}^*_{\rm fin}(\SL _n({\mathbb{Z}}))$ as the completion of $\mathbb{C}[\SL _n({\mathbb{Z}})]$
with respect to the seminorm $\|\cdot\|_{\rm fin}$.
Since $\SL _n({\mathbb{Z}})$ is residually finite, the left regular representation is weakly contained in a direct sum of finite dimensional representations.
Therefore the seminorm $\|\cdot\|_{\rm{fin}}$ is (strictly) greater than the reduced norm $\|\cdot\|_r$.
Hence this is indeed a norm and consequently ${\rm C}^*_{\rm fin}(\SL _n({\mathbb{Z}}))$ is infinite dimensional.
Moreover, since property~{\rm (T)} passes to a quotient (Proposition \ref{Prop:PerT}), ${\rm C}^*_{\rm fin}(\SL _n({\mathbb{Z}}))$ has property~{\rm (T)}.
On the other hand, since ${\rm C}^*_{\rm fin}(\SL _n({\mathbb{Z}}))$ is residually finite dimensional, it is contained in the class $\mathcal{H}$ by Proposition \ref{Prop:RFDT}.
\end{Exm}
\begin{Rem}
In Bekka's paper \cite{Bek2}, he proves that
the number of the quasi-equivalent classes of the infinite dimensional finite factor representations of $\SL _n({\mathbb{Z}})$
is less than or equal to the cardinality of the center of $\SL _n({\mathbb{Z}})$ \cite[Theorem 3]{Bek2}.
The center of $\SL _n({\mathbb{Z}})$ is $\{ I\}$ if $n$ is odd and is $\{\pm I\}$ if $n$ is even .
If $n$ is odd, then the left regular representation $\lambda$ of $\SL _n({\mathbb{Z}})$ is a finite factor representation
so this is the only infinite dimensional finite factor representation of $\SL _n({\mathbb{Z}})$.
If $n$ is even, put $p:=(\lambda_{I}-\lambda_{-I})/2$,~$q=p^\perp.$
Then both $p$ and $q$ are the central projections of the group von Neumann algebra $L(\SL _n({\mathbb{Z}}))$.
Since the center of $L(\SL _n({\mathbb{Z}}))$ is spanned by $p$ and $q$, both subrepresentations of $\lambda$ reduced by $p$ and $q$ are finite factor representations.
Clearly, these two representations are mutually different: One is faithful but the other is not.
Consequently, in both cases, any infinite dimensional finite factor representation of $\SL _n({\mathbb{Z}})$ is quasi-equivalent to
a subrepresentation of $\lambda$.
(In particular, Bekka's super-rigidity theorem for $\SL _n({\mathbb{Z}})$ \cite[Theorem 1]{Bek2} indeed holds without taking finite index subgroups.)
Since $\lambda$ is weakly contained in a direct sum of finite dimensional representations,
our completion in the previous example is indeed the maximal tracial completion,
namely, it is the maximal completion which makes the completed algebra to have a separating family of tracial states.
So it is a natural object.
Note that again by Bekka's result, this completion does not coincide with the maximal one.
This was raised as a question by Kirchberg \cite[p.487 (P4)]{Kir} and proved by Bekka \cite[Section 8]{Bek2}.
\end{Rem}
\begin{Rem}
For nuclear case, a much stronger negation of property~{\rm (T)} is proved by Brown as follows \cite[Theorem 5.1]{Bro}.\\
{\it Let $A$ be a \Cs -algebra which is nuclear and has property~{\rm (T)}.
Then $A$ is of the form $B\oplus C$,
where $B$ is finite dimensional and $C$ admits no tracial states.
In particular, if we further assume $A$ has a faithful tracial state,
then $A$ must be finite dimensional.}
\end{Rem}
Using Example \ref{Exm:SLn}, it can be also shown the following property of the Haagerup property,
which does not occur in the context of von Neumann algebras \cite[Proposition 2.4.]{Jol}.
\begin{Thm}\label{Thm:trd}
The Haagerup property for \Cs -algebras does depend on the choice of a faithful tracial state.
\end{Thm}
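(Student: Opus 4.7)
The plan is to exhibit a single \Cs -algebra with two faithful tracial states, one of which gives the Haagerup property and the other does not. The natural candidate is $A:={\rm C}^*_{\rm fin}(\SL_n(\mathbb{Z}))$ for some $n\geq 3$, introduced in Example \ref{Exm:SLn}. By construction $A$ has property~{\rm (T)} and is residually finite dimensional, so by Proposition \ref{Prop:RFDT} it belongs to $\mathcal{H}$; in particular there is a faithful tracial state $\tau_0$ on $A$ with the Haagerup property. I therefore need only produce a second faithful tracial state $\tau$ under which the Haagerup property fails.

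To build $\tau$, I would exploit the surjection $A\twoheadrightarrow {\rm C}_r^*(\SL_n(\mathbb{Z}))$, which exists because $\|\cdot\|_r\leq\|\cdot\|_{\rm fin}$ on $\mathbb{C}[\SL_n(\mathbb{Z})]$, and pull back the canonical tracial state on the reduced \Cs -algebra to obtain a (non-faithful) tracial state $\tau_r$ on $A$. Its GNS-construction is unitarily equivalent to the left regular representation of $\SL_n(\mathbb{Z})$, hence $\pi_{\tau_r}(A)''\cong L(\SL_n(\mathbb{Z}))$, which is a type {\rm II$_1$} factor since $\SL_n(\mathbb{Z})$ is ICC for $n\geq 2$. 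Setting $\tau:=\tfrac{1}{2}\tau_0+\tfrac{1}{2}\tau_r$ produces a faithful tracial state whose GNS-closure $\pi_\tau(A)''$ splits as a direct sum containing $L(\SL_n(\mathbb{Z}))$ as a direct summand.

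It then remains to derive a contradiction from assuming $(A,\tau)$ has the Haagerup property. By Lemma \ref{Lem:GNSH} this would pass to $(\pi_\tau(A)'',\tau)$, while property~{\rm (T)} of $A$ together with Lemma \ref{Lem:GNST} would give that $(\pi_\tau(A)'',\pi_\tau(A)'')$ has relative property~{\rm (T)} in Popa's sense. Theorem \ref{Thm:Popa} combined with Remark \ref{Rem:Popa2} then forces $\pi_\tau(A)''$ to be a direct sum of matrix algebras, which contradicts the presence of the diffuse summand $L(\SL_n(\mathbb{Z}))$. Hence $(A,\tau)$ cannot have the Haagerup property, while $(A,\tau_0)$ does, proving the theorem.

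The only genuinely delicate point is verifying that the diffuse part survives in $\pi_\tau(A)''$: this is essentially automatic because $\tau_r$ is itself a tracial state and the GNS decomposition of a convex combination of states is (the closure of) the direct sum of the individual GNS representations, but one must be careful to record that $L(\SL_n(\mathbb{Z}))$ appears as a direct summand rather than merely as a subalgebra, so that the Popa/Remark \ref{Rem:Popa2} dichotomy really applies.
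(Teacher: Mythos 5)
Your proof is correct, but it takes a genuinely different route from the paper's. You derive the failure of the Haagerup property for $\tau=\tfrac12(\tau_0+\tau_r)$ by passing to the GNS closure and invoking the von Neumann algebraic rigidity machinery already set up in Section 4: Lemma \ref{Lem:GNSH}, Lemma \ref{Lem:GNST}, Popa's Theorem \ref{Thm:Popa} and Remark \ref{Rem:Popa2} force $\pi_\tau(A)''$ to be a direct sum of matrix algebras, which clashes with the diffuse direct summand $L(\SL_n(\mathbb{Z}))$ coming from $\tau_r$. The paper instead argues directly at the level of the group: it uses the boundedness of the map $l^2_\tau(\SL_n(\mathbb{Z}))\to l^2(\SL_n(\mathbb{Z}))$ (coming from $\tau\geq\tfrac12\delta_e$) to produce positive definite functions $\psi_k(g)=\langle\delta_g,\lambda(\Phi_k(g))\delta_e\rangle_2$, which converge to $1$ uniformly by property~{\rm (T)} of the \emph{group}, yet vanish at infinity for each fixed $k$ by $L^2$-compactness of $\Phi_k$. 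The paper's argument is thus self-contained and elementary (no appeal to Popa's theorem), much in the spirit of the Appendix; yours is shorter given the machinery of Section 4 and is essentially an instance of the Remark following Theorem \ref{Thm:T}. Your handling of the delicate point is right: since $\tau\geq\tfrac12\tau_r$, the representation $\pi_{\tau_r}$ is quasi-contained in $\pi_\tau$, and compressing by the central support of $\pi_{\tau_r}$ exhibits $L(\SL_n(\mathbb{Z}))$ as a genuine direct summand of $\pi_\tau(A)''$, not merely a subalgebra. One small correction: $\SL_n(\mathbb{Z})$ is \emph{not} ICC for even $n$ (the center is $\{\pm I\}$, as the paper itself notes), so $L(\SL_n(\mathbb{Z}))$ need not be a factor; but it is a direct sum of at most two {\rm I\hspace{-.1em}I$_1$} factors (indeed the group von Neumann algebra of any infinite group is diffuse), which is all your argument requires. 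Alternatively, just take $n=3$.
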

\begin{proof}
Let $A={\rm C}^*_{\rm fin}(\SL _n({\mathbb{Z}}))$, where $n\geq 3$.
We already know it has a faithful tracial state with the Haagerup property.
So to show the claim, it suffices to find a faithful tracial state $\tau$ on $A$ without the Haagerup property.
Remark that, since the left regular representation $\lambda$ of $\SL _n({\mathbb{Z}})$ is weakly contained in a direct sum of finite dimensional representations,
$\delta_e$ extends to a tracial state of $A$, say the extension $\tau_1$.
Define $\tau=(\tau_1+\tau_2)/2$, where $\tau_2$ is an arbitrary faithful tracial state on $A$.
We will show the pair $(A, \tau)$ does not have the Haagerup property.
Assume by contradiction that $(A, \tau)$ has the Haagerup property, i.e., there exists a sequence $(\Phi_k)_k$ of {\rm u.c.p.\null} maps on $A$ satisfying the properties listed on Definition \ref{Def:Haa}.
Consider $l_{\tau}^2(\SL _n({\mathbb{Z}}))$, which is the GNS-space of $\tau$.
For any $f\in c_c(\SL _n({\mathbb{Z}}))$, we have $\|f\|_2^2\leq 2\|f\|_\tau^2$,
hence the identity map on $c_c(\SL _n({\mathbb{Z}}))$ extends to a bounded operator from $l_{\tau}^2(\SL _n({\mathbb{Z}}))$ into $l^2(\SL _n({\mathbb{Z}}))$.
Denote the extension by $\pi$.
Now for each $k$, we define a complex valued function $\psi_k$ on $\SL _n({\mathbb{Z}})$ by
\[\psi_k(g):=\langle\delta_g, \pi(\Phi_k(g)\delta_e^\tau)\rangle_2=\langle\delta_g, \lambda\left(\Phi_k(g)\right)\delta_e\rangle_2.\]
Then $\psi_k$ converges to $1$ pointwise as $k$ tends to infinity.
On the other hand, since $\lambda\circ\Phi_k$ is {\rm u.c.p.\null}, $\psi_k$ is positive definite.
Hence the convergence of $\psi_k$ is indeed uniform (since $\SL _n({\mathbb{Z}})$ has property~{\rm (T)}).
However, note that the family $(\delta_g)_{g\in\SL _n({\mathbb{Z}})}$ is an orthonormal basis of $l^2(\SL _n({\mathbb{Z}}))$,
whereas the set $\{\pi(\Phi_k(g)\delta_e^\tau)\}_{g\in\SL _n({\mathbb{Z}})}$ is relatively compact in $l^2(\SL _n({\mathbb{Z}}))$ by the $L^2$-compactness of $\Phi_k$ and the boundedness of $\pi$, which is a contradiction.
\end{proof}

In the context of von Neumann algebras, the non-embeddable result of Corollary \ref{Cor:Nem} still holds for the group von Neumann algebra of a group which has relative property~{\rm (T)}
with respect to an infinite subgroup. This is because the group von Neumann algebra of an infinite group is
always diffuse. The corresponding result is not true in the context of \Cs -algebras, because
the reduced group \Cs -algebra of an infinite group can be residually finite dimensional.
Indeed, many typical relative property~{\rm (T)} groups fail to have the rigidity property.

\begin{Lem}\label{Lem:SL2}
Let $A$ be a unital \Cs -algebra with an action of a group $\Gamma$ with the Haagerup property.
Assume $A$ admits a countable family $(\pi _n)_n$ of $\Gamma$-equivariant finite dimensional representations which separates the points of $A$.
Then the reduced crossed product $A\rtimes _r\Gamma$ embeds into a \Cs -algebra in the class $\mathcal{H}$.
\end{Lem}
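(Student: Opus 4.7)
The plan is to realize $A \rtimes_r \Gamma$ as a \Cs-subalgebra of the spatial tensor product $\left(\prod_n \mathbb{M}_{d_n}\right) \otimes {\rm C}_r^*(\Gamma)$, where $d_n = \dim \pi_n$, and then to observe that this target algebra already lies in $\mathcal{H}$ by the permanence theorems established in Section \ref{sec:Ex}. The crucial point is that, because each $\pi_n$ is finite-dimensional, the $\Gamma$-action on $\mathbb{M}_{d_n}$ is inner and is implemented by unitaries living inside the algebra itself.

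To set up the embedding, let $H_n$ be the representation space of $\pi_n$ and let $U_n \colon \Gamma \to \mathcal{U}(H_n) \subset \mathbb{M}_{d_n}$ be the unitary representation witnessing the equivariance $\pi_n(g \cdot a) = U_n(g) \pi_n(a) U_n(g)^*$. Set $H = \bigoplus_n H_n$, $\pi = \bigoplus_n \pi_n \colon A \to B(H)$ (which is faithful thanks to the separating hypothesis), and $U = \bigoplus_n U_n \colon \Gamma \to \mathcal{U}(H)$. I would realize $A \rtimes_r \Gamma$ in its standard regular form on $H \otimes l^2(\Gamma)$, that is, via the covariant pair $(\tilde{\pi}, 1 \otimes \lambda)$ with $\tilde{\pi}(a)(\xi \otimes \delta_g) = \pi(g^{-1} \cdot a) \xi \otimes \delta_g$. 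Conjugating by the unitary $W$ defined by $W(\xi \otimes \delta_g) = U(g) \xi \otimes \delta_g$, a direct computation gives $W \tilde{\pi}(a) W^* = \pi(a) \otimes 1$ and $W (1 \otimes \lambda(g)) W^* = U(g) \otimes \lambda(g)$. Since $\pi(a) = (\pi_n(a))_n$ and $U(g) = (U_n(g))_n$ both belong to $\prod_n \mathbb{M}_{d_n}$, the image of $A \rtimes_r \Gamma$ under this conjugated representation sits inside $\left(\prod_n \mathbb{M}_{d_n}\right) \otimes {\rm C}_r^*(\Gamma)$.

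It then remains to check that this ambient algebra is in $\mathcal{H}$. Each $\mathbb{M}_{d_n}$ belongs to $\mathcal{H}$ with its normalized trace, so the countable direct product $\prod_n \mathbb{M}_{d_n}$ belongs to $\mathcal{H}$ (with a weighted sum of these traces) by Theorem \ref{Thm:per} (1). The Haagerup property of $\Gamma$ gives ${\rm C}_r^*(\Gamma) \in \mathcal{H}$ via Theorem \ref{Thm:Don} (1). Finally Theorem \ref{Thm:per} (2) puts the spatial tensor product of these two algebras in $\mathcal{H}$, completing the embedding.

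The main obstacle, and essentially the only step requiring a real idea, is the use of the unitary $W$ to replace the twisted covariant representation $\tilde{\pi}$ by the decoupled pair $(\pi \otimes 1,\, U \otimes \lambda)$. This manoeuvre is available precisely because the equivariance of $\pi$ is implemented by a unitary representation on the \emph{same} Hilbert space $H$, a feature that here is automatic only because each $\pi_n$ is finite-dimensional. Without this hypothesis one cannot expect to rewrite the reduced crossed product as a tensor product with ${\rm C}_r^*(\Gamma)$, in line with the failure signalled in the remark following Theorem \ref{Thm:per2}.
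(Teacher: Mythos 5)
Your route is genuinely different from the paper's. The paper never untwists the crossed product: it uses equivariance only to conclude that each $\ker\pi_n$ is a $\Gamma$-invariant ideal, forms the $\Gamma$-equivariant embedding $\bigoplus_n\pi_n\colon A\hookrightarrow\prod_n A/\ker\pi_n$, passes to reduced crossed products to get $A\rtimes_r\Gamma\hookrightarrow\prod_n\bigl((A/\ker\pi_n)\rtimes_r\Gamma\bigr)$, and then quotes Theorem \ref{Thm:Don}~(3) (crossed products of finite dimensional algebras by Haagerup groups are in $\mathcal{H}$) together with Theorem \ref{Thm:per}~(1). Your version instead conjugates the regular representation by the Fell-type unitary $W$ to land inside $\bigl(\prod_n\mathbb{M}_{d_n}\bigr)\otimes{\rm C}_r^*(\Gamma)$ and then uses only Theorems \ref{Thm:per}~(1), (2) and \ref{Thm:Don}~(1). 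The computation with $W$ is correct, and when it applies your argument is arguably more concrete, since it bypasses Dong's result on crossed products of finite dimensional algebras entirely.

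There is, however, one step that is a genuine gap in the stated generality: the existence of the unitary representations $U_n$. You take for granted that the equivariance of $\pi_n$ is witnessed by a group homomorphism $U_n\colon\Gamma\to\mathcal{U}(H_n)$, i.e., that $\pi_n$ is part of a covariant pair. If ``$\Gamma$-equivariant finite dimensional representation'' only means that $\pi_n$ intertwines the action on $A$ with an action on the finite dimensional image (which is all the paper's proof uses, and all that the quotient maps ${\rm C}_r^*(R^2)\to{\rm C}_r^*((R/nR)^2)$ in Theorem \ref{Thm:SL2} literally provide), then an action of $\Gamma$ on $\mathbb{M}_{d_n}$ canonically yields only a \emph{projective} unitary representation; the obstruction to lifting it to a genuine one lives in $H^2(\Gamma,\mathbb{T})$ and does not vanish in general. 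With a projective $U$ carrying cocycle $\sigma$, your conjugation gives $W(1\otimes\lambda(g))W^*(\xi\otimes\delta_h)=\sigma(g,h)U(g)\xi\otimes\delta_{gh}$, which is not of the form $U(g)\otimes\lambda(g)$; the untwisted target becomes $\mathbb{M}_{d_n}\otimes{\rm C}_r^*(\Gamma,\sigma)$ with a twisted group ${\rm C}^*$-algebra, and membership of that algebra in $\mathcal{H}$ is not covered by Theorem \ref{Thm:Don}~(1). In the concrete applications the actions are by permutations of a finite group, so genuine permutation representations exist and your argument does go through there; but to prove the lemma as stated you should either build the covariant-pair assumption into the hypothesis, handle the projective case separately, or fall back on the paper's device of passing to the finite dimensional quotients $A/\ker\pi_n$ and invoking Theorem \ref{Thm:Don}~(3).
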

\begin{proof}
Take a countable separating family $(\pi _n)_n$ of $\Gamma$-equivariant finite dimensional representations.
Then we have a $\Gamma$-equivariant embedding
\[\bigoplus_n\pi_n\colon A \hookrightarrow \prod _nA/{\ker\pi _n}.\]
By taking the reduced crossed products,
we have an embedding
\[ A\rtimes _r \Gamma \hookrightarrow \prod _n\left(\left(A/{\ker\pi _n}\right)\rtimes _r\Gamma\right). \]
Since each $A/{\ker\pi _n}$ is finite dimensional and $\Gamma$ has the Haagerup property,
the range of the above map is contained in the class $\mathcal{H}$.
\end{proof}
Lemma \ref{Lem:SL2} can apply to many reduced group \Cs -algebras of groups without the Haagerup property.
Here we recall the examples of groups which have relative property~{\rm (T)}.
\begin{Def}
Let $\mathbb{K}$ be an algebraic number field (i.e., a finite extension of the rational number field $\mathbb{Q}$),
$R$ be the ring of integers of $\mathbb{K}$ (i.e., the ring of all elements of $\mathbb{K}$ which are roots of a nonzero monic polynomial with the integer coefficients).
The {\it three-dimensional Heisenberg group with the coefficients in} $R$, denoted by $H_3(R)$, is the subgroup of $\SL _3(R)$
which consists of all upper triangular matrices with the diagonal entries $1$.
Equivalently, $H_3(R)$ is defined  as the set $R^2\times R$
with the group operation \[(x,\lambda )(y,\mu ):=(x+y,\lambda +\mu +\omega (x,y)),\]
where $\omega (x,y):=x_1y_2-x_2y_1$ is the symplectic form.
In the latter picture, $\SL _2(R)$ canonically acts on $H_3(R)$ by acting on the first coordinate.
(Since $\SL _2(R)$ preserves $\omega$, this indeed defines an action on the group $H_3(R)$.)
\end{Def}
\begin{Prop}[\cite{BHV}, \cite{Che}]
Let $\mathbb{K}$ be an algebraic number field,
$R$ be the ring of integers of $\mathbb{K}$.
Then the following hold.
\begin{enumerate}[\upshape (1)]
\item The pair $(R^2\rtimes\SL _2(R), R^2)$ has relative property~{\rm (T)}.
\item The pair $(H_3(R)\rtimes\SL _2(R), H_3(\mathbb{Z}))$ has relative property~{\rm (T)}.
\item The group $\SL _2(R)$ has the Haagerup property.
\end{enumerate}
\end{Prop}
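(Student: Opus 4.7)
The plan is to treat the three claims essentially independently, with (3) being a classical lattice-theoretic fact and (1)--(2) instances of the Mackey-type machinery for relative property~(T) of semidirect products with an abelian normal subgroup. For (3), let $r_1,r_2$ denote the numbers of real and complex places of $\mathbb{K}$, so $[\mathbb{K}:\mathbb{Q}]=r_1+2r_2$. Applying these embeddings entrywise realises $\SL_2(R)$ as a lattice in $\SL_2(\mathbb{R})^{r_1}\times\SL_2(\mathbb{C})^{r_2}$ by Borel--Harish-Chandra. Each factor is a real rank one simple Lie group and acts properly isometrically on a real or complex hyperbolic space, so each has the Haagerup property; since the Haagerup property is closed under finite direct products and passes to closed subgroups, $\SL_2(R)$ inherits it.

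For (1), the plan is to apply a Burger-type spectral criterion: the pair $(A\rtimes\Gamma,A)$ with $A$ abelian has relative property~(T) if and only if every sequence of Borel probability measures on the Pontryagin dual $\widehat A$ that is asymptotically $\Gamma$-invariant stays supported away from a fixed neighbourhood of the trivial character. Fixing a $\mathbb{Z}$-basis of $R$ identifies $R^2$ with $\mathbb{Z}^{2d}$ and $\widehat{R^2}$ with $\mathbb{T}^{2d}$, where $d=[\mathbb{K}:\mathbb{Q}]$, and the resulting map $\SL_2(R)\to\mathrm{GL}_{2d}(\mathbb{Z})$ contains a hyperbolic element: Dirichlet's unit theorem provides a unit $\epsilon\in R^\times$ of infinite order, and $\mathrm{diag}(\epsilon,\epsilon^{-1})\in\SL_2(R)$ then acts on $\mathbb{T}^{2d}$ with eigenvalues (the Galois conjugates of $\epsilon^{\pm1}$) not all lying on the unit circle. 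The standard Kazhdan--Margulis argument, exactly as in the $R=\mathbb{Z}$ case, then rules out asymptotically invariant measures concentrating near the identity.

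For (2), the plan is to exploit the central $\SL_2(R)$-equivariant extension $0\to R\to H_3(R)\to R^2\to 0$, in which $\SL_2(R)$ acts trivially on the centre $R$ and canonically on the quotient $R^2$. Relative property~(T) for $(H_3(R)\rtimes\SL_2(R),H_3(R))$ then reduces, via the two-step extension principle for relative property~(T), to the already-established case (1) for the quotient together with the automatic case for the central abelian factor; monotonicity of relative property~(T) in the subgroup finally yields the stated version with $H_3(\mathbb{Z})$.

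The step I expect to be the main obstacle is the measure-theoretic analysis in (1): while the reduction to a hyperbolic toral automorphism is clean, verifying that the existence of such an element really obstructs invariant means on $\mathbb{T}^{2d}\setminus\{0\}$ concentrating at the identity requires the full Kazhdan--Margulis spectral argument, and it is here that one genuinely exploits Dirichlet's unit theorem. Once (1) is established, (3) is immediate from rank-one geometry, and (2) follows from a formal extension argument.
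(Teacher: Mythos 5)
The paper does not actually prove this proposition; it is quoted from \cite{BHV} and \cite{Che}, so your sketch can only be judged on its own terms. Part (3) is correct and standard: $\SL_2(R)$ is a lattice in $\SL_2(\mathbb{R})^{r_1}\times\SL_2(\mathbb{C})^{r_2}$, each factor has the Haagerup property via its proper isometric action on real or complex hyperbolic space, and the property passes to finite products and to closed (in particular discrete) subgroups. The outline of (2) via the central extension $0\to R\to H_3(R)\to R^2\to 0$ also matches the structure of the known proof, but the step you call ``the automatic case for the central abelian factor'' is not automatic: relative property~{\rm (T)} of the pair $\left(H_3(R)\rtimes\SL_2(R),\,Z(H_3(R))\right)$ is a genuine theorem (one uses that the centre lies in the commutators of $H_3(R)$, so that vectors almost invariant under the big group are quantitatively almost invariant under the centre), and without it your two-step reduction does not close.

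The serious gap is in (1). First, Dirichlet's unit theorem produces a unit of infinite order only when $r_1+r_2\geq 2$; for $\mathbb{K}=\mathbb{Q}$ and for imaginary quadratic fields the unit group of $R$ is finite, so $\mathrm{diag}(\epsilon,\epsilon^{-1})$ has finite order and gives no hyperbolic toral automorphism. Second, and more importantly, even when such an element $g$ exists its presence cannot drive the argument: the subgroup $R^2\rtimes\langle g\rangle$ is abelian-by-cyclic, hence amenable, and an amenable group never has relative property~{\rm (T)} with respect to an infinite subgroup (almost invariant vectors in its regular representation would produce nonzero $R^2$-invariant $\ell^2$-functions). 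Concretely, $\langle g\rangle$ fixes points of the projective space of $\mathbb{R}^{2d}$, and the resulting invariant measure pulls back to an asymptotically $\langle g\rangle$-invariant family of probability measures on $\widehat{R^2}$ concentrating at the trivial character --- exactly the objects your criterion must exclude. The Burger/Kazhdan--Margulis argument genuinely requires that the \emph{whole} group $\SL_2(R)$ preserve no probability measure on that projective space (equivalently, no invariant mean on $\widehat{R^2}\setminus\{0\}$); this is where non-amenability and the joint dynamics of several unipotent and semisimple elements enter, and it is precisely the step your sketch collapses to a single element. Until that is repaired, (1) --- and with it the reduction in (2) --- is not proved.
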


\begin{Thm}\label{Thm:SL2}
Let $\mathbb{K}$ be an algebraic number field,
$R$ be the ring of integers of $\mathbb{K}$.
Then the reduced group \Cs -algebras of $R^2\rtimes\SL _2(R)$,
$H_3(R)\rtimes\SL_2(R)$ embed into \Cs -algebras in the class $\mathcal{H}$.
\end{Thm}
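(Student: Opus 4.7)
The plan is to apply Lemma \ref{Lem:SL2} with $\Gamma = \SL_2(R)$, which has the Haagerup property by the preceding proposition, acting on $A = {\rm C}_r^*(R^2)$ or on $A = {\rm C}_r^*(H_3(R))$. Since $R^2$ and $H_3(R)$ are nilpotent and hence amenable, each crossed product $A \rtimes_r \SL_2(R)$ is canonically isomorphic to the reduced group \Cs-algebra of the corresponding semidirect product. Thus it suffices, in each case, to produce a countable separating family of $\SL_2(R)$-equivariant finite dimensional representations of $A$.

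For this I would use reduction modulo $n$. For each positive integer $n$, the surjection $R \to R/nR$ induces $\SL_2(R)$-equivariant group epimorphisms
\[q_n^{(1)}\colon R^2 \twoheadrightarrow (R/nR)^2 \quad \text{and} \quad q_n^{(2)}\colon H_3(R)\twoheadrightarrow H_3(R/nR),\]
where $\SL_2(R)$ acts on the finite targets through its natural quotient $\SL_2(R/nR)$. Passing to group \Cs-algebras (full and reduced coincide here by amenability) yields $\SL_2(R)$-equivariant surjections $\pi_n$ onto the finite dimensional \Cs-algebras $\mathbb{C}[(R/nR)^2]$ and $\mathbb{C}[H_3(R/nR)]$, respectively.

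The heart of the argument is to verify that the countable family $(\pi_n)_n$ separates points of $A$. Here I would exploit amenability: a representation of the reduced group \Cs-algebra of an amenable group is faithful if and only if it weakly contains the left regular representation. For any $g \neq e$, residual finiteness of the finitely generated nilpotent groups $R^2$ and $H_3(R)$ forces $g \notin \ker q_n$ for all sufficiently large $n$, so the positive definite functions $\chi_{\ker q_n}$---matrix coefficients of the canonical cyclic vector in $\pi_n$---converge pointwise to $\chi_{\{e\}}$, the coefficient of the left regular representation at $\delta_e$. This exhibits the desired weak containment, so $\bigoplus_n \pi_n$ is faithful. Lemma \ref{Lem:SL2} then supplies the two embeddings into the class $\mathcal{H}$.

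The main obstacle is precisely this last verification: passing from residual finiteness, a statement about separating group elements, to a separation statement on the reduced \Cs-algebra. Amenability of $R^2$ and $H_3(R)$ is essential, for it is what allows the pointwise convergence of the $\chi_{\ker q_n}$ to absorb the regular representation into the direct sum of finite quotient representations; without it, the construction of the $\pi_n$ alone would not guarantee an embedding into a product of their images.
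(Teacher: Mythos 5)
Your proof is correct and follows essentially the same route as the paper: reduction modulo $n$ produces the $\SL_2(R)$-equivariant finite dimensional quotient representations, and Lemma \ref{Lem:SL2} finishes the argument. The only difference is that you spell out the separation-of-points step (which the paper dismisses as ``easy to check'') via pointwise convergence of $\chi_{\ker q_n}$ to $\delta_e$ and weak containment of the regular representation, using amenability to identify the full and reduced norms; that verification is sound.
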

\begin{proof}
First note that since both $R^2$ and $H_3(R)$ are amenable, the full and reduced group \Cs -algebras of these groups are equal.
Since $R$ is finitely generated as an additive group,
for any natural number $n\in \mathbb{N}$,~$R/nR$ is a finite ring.
So $(R/nR)^2$ and $H_3(R/nR)$ are also finite.
Moreover, it is obvious that these quotients are $\SL _2(R)$-equivariant.
Consequently, we obtain $\SL _2(R)$-equivariant finite dimensional representations
\[\pi_n\colon{\rm C}_r^*(R^2)\rightarrow {\rm C}_r^*((R/nR)^2),\]
\[\sigma_n\colon{\rm C}_r^*(H_3(R))\rightarrow {\rm C}_r^*(H_3(R/nR)).\]
Now it is easy to check these families separate points.
Consequently, we can apply Lemma \ref{Lem:SL2} to the \Cs -algebras we have considered.
\end{proof}
\begin{Rem}\label{Rem:SL2}
The same proof also works for the group $\mathbb{F}_p[t]^2\rtimes\SL _2(\mathbb{F}_p[t])$,
which has relative property~{\rm (T)} with the subgroup $\mathbb{F}_p[t]^2$,
by replacing $nR$ by $t^n\mathbb{F}_p[t]$ in the proof.
\end{Rem}

\section{A Rigidity Property of the Affine Groups of the Affine Planes}\label{sec:Aff}
In Section \ref{sec:App}, we have seen that, unlike the case of von Neumann algebras, the non-embeddable theorem for the reduced group \Cs -algebras of relative property~{\rm (T)} groups
fails in general.
The difficulty comes from the fact \Cs -algebras admit many ``mutually singular'' faithful tracial states.
However, if we overcome this difficulty, then we can prove a rigidity theorem for a group, even if the group has no infinite property~{\rm (T)} subgroups.
For instance, we will consider the two classes of groups.
The first class consists of groups with Powers's property.
For a Powers group without the Haagerup property, the non-embeddable theorem follows from the uniqueness of the tracial state on the reduced group \Cs -algebra \cite[Proposition 3]{Har}.
By using the free product, it is easy to construct an artificial group in this class without both the Haagerup property and infinite property~{\rm (T)} subgroups
(e.g., $\left(\mathbb{Z}^2\rtimes\SL _2(\mathbb{Z})\right)\ast\mathbb{Z}$).
However, the author does not know an example of group as above which naturally arises.
So we also study the other class.
These groups do not contain infinite property~{\rm (T)} subgroups and naturally arise in many fields:
Namely, we study a rigidity property of the reduced group \Cs -algebras of the affine groups $\mathbb{K} ^2\rtimes {\rm GL} _2(\mathbb{K})$ of the affine planes
(or more strongly for the subgroup $\mathbb{K} ^2\rtimes\SL _2(\mathbb{K})$) over the fields $\mathbb{K}$.
Note that the group $\mathbb{K} ^2\rtimes {\rm GL} _2(\mathbb{K})$ is the automorphism group of the affine plane over $\mathbb{K}$,
so this is a very natural object.
First we remark that a rigidity property obviously fails when $\mathbb{K}$ is an algebraic extension of a finite field.
In this case, $\mathbb{K}$ is an increasing union of finite subfields.
From this, the affine group over $\mathbb{K}$ is an increasing union of finite subgroups,
in particular it is amenable.
We will show that excepting these amenable cases,
the affine groups have a rigidity property.

\begin{Thm}\label{Thm:Aff}
Let $\mathbb{K}$ be a field which is not an algebraic extension of a finite field.
Then ${\rm C}_r^*(\mathbb{K}^2\rtimes\SL_2(\mathbb{K}))$ cannot be embedded into any \Cs -algebra in the class $\mathcal{H}$.
\end{Thm}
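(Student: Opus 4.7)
The plan is to argue by contradiction. Suppose there is an embedding $\iota:{\rm C}_r^*(\Gamma)\hookrightarrow A$ with $\Gamma=\mathbb{K}^2\rtimes\SL_2(\mathbb{K})$ and $A\in\mathcal{H}$. Fixing a faithful Haagerup tracial state $\tau$ on $A$, set $\tau':=\tau\circ\iota$, a faithful tracial state on ${\rm C}_r^*(\Gamma)$. Since $\mathbb{K}^2$ is amenable, ${\rm C}^*(\mathbb{K}^2)={\rm C}_r^*(\mathbb{K}^2)\cong C(\widehat{\mathbb{K}^2})$, and $\tau'|_{{\rm C}^*(\mathbb{K}^2)}$ corresponds via Riesz representation to a Borel probability measure $\mu$ on $\widehat{\mathbb{K}^2}$ of full support. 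I would then pass to the GNS level: put $M:=\pi_\tau(A)''$, which has the Haagerup property by Lemma \ref{Lem:GNSH}, and $N:=\pi_\tau(\iota({\rm C}^*(\mathbb{K}^2)))''\subseteq M$, a commutative von Neumann subalgebra weakly generated by the image of $C(\widehat{\mathbb{K}^2})$.

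The central input is that the pair $(\Gamma,\mathbb{K}^2)$ has relative property~{\rm (T)}. Since $\mathbb{K}$ is not algebraic over a finite field, the subring $R_0=\mathbb{Z}$ (if $\mathrm{char}\,\mathbb{K}=0$) or $R_0=\mathbb{F}_p[t]$ for a transcendental $t$ (if $\mathrm{char}\,\mathbb{K}=p$) embeds into $\mathbb{K}$, so the classical relative~{\rm (T)} pair $(R_0^2\rtimes\SL_2(R_0),R_0^2)$ sits as a subgroup of $\Gamma$; using Shalom-type bounded-generation techniques together with the amenability of $\mathbb{K}^2/R_0^2$, this can be promoted to relative~{\rm (T)} of $(\Gamma,\mathbb{K}^2)$. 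Invoking the Leung-Ng correspondence and Lemma \ref{Lem:GNST}, the pair $(M,N)$ inherits relative property~{\rm (T)} in Popa's sense, and Popa's theorem (Theorem \ref{Thm:Popa}) together with Remark \ref{Rem:Popa2} forces $N$ to have no nonzero diffuse direct summand. Being commutative, $N$ must therefore be atomic.

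The next step is to translate atomicity of $N$ back into information about $\mu$. For each minimal projection $e\in N$ one has $eNe=\mathbb{C}e$, so the assignment $f\mapsto fe$ on $C(\widehat{\mathbb{K}^2})\subset N$ defines a character, necessarily of the form $f\mapsto f(\chi_e)$ for a unique $\chi_e\in\widehat{\mathbb{K}^2}$. Faithfulness of the inclusion $C(\widehat{\mathbb{K}^2})\hookrightarrow N$, equivalent to full support of $\mu$, forces the set $\{\chi_e\}$ to be dense in $\widehat{\mathbb{K}^2}$. Conjugation by the unitaries $u_g:=\pi_\tau(\iota(g))$, $g\in\SL_2(\mathbb{K})$, preserves $N$ and $\tau|_N$, permutes its minimal projections, and on the spectral side implements the standard dual action $\chi\mapsto\chi\circ g^{-1}$. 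Because $\tau|_N$ is a finite trace and atoms in one orbit carry equal mass, each $\SL_2(\mathbb{K})$-orbit of minimal projections, and hence each orbit $\SL_2(\mathbb{K})\cdot\chi_e$, is finite.

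The final step is purely group-theoretic: I would show that the only $\chi\in\widehat{\mathbb{K}^2}$ with finite $\SL_2(\mathbb{K})$-orbit is the trivial character. Writing $\chi=(\chi_1,\chi_2)$, if $\chi_1\neq 0$ then $\mathrm{diag}(t,t^{-1})\in\SL_2(\mathbb{K})$ sends $\chi$ to $(\chi_1(t^{-1}\cdot),\chi_2(t\cdot))$, and different $t\in\mathbb{K}^\times$ yield different first coordinates (otherwise $\chi_1$ would vanish on $(t^{-1}-s^{-1})\mathbb{K}=\mathbb{K}$, contradicting $\chi_1\neq 0$); since $\mathbb{K}^\times$ is infinite, the orbit is infinite. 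The case $\chi_1=0$, $\chi_2\neq 0$ is handled symmetrically by the lower-triangular unipotent subgroup. Thus $\{\chi_e\}\subseteq\{0\}$, which cannot be dense in $\widehat{\mathbb{K}^2}$ as $\mathbb{K}^2\neq 0$---the desired contradiction. I expect the chief obstacle to be the promotion of relative property~{\rm (T)} from the subring-level pair $(R_0^2\rtimes\SL_2(R_0),R_0^2)$ to the full pair $(\Gamma,\mathbb{K}^2)$; once that is granted, the remainder is a clean application of Popa's rigidity theorem together with the elementary orbit analysis above.
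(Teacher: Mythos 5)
Your overall architecture (passing to GNS closures via Lemmas \ref{Lem:GNSH} and \ref{Lem:GNST}, invoking Popa's theorem through Remark \ref{Rem:Popa2}, and the dual-side orbit analysis at the end) is sound, but the proof hinges on a claim you do not establish and whose proposed justification does not work: relative property~(T) for the full pair $(\mathbb{K}^2\rtimes\SL_2(\mathbb{K}),\mathbb{K}^2)$. The mechanism you suggest --- promoting relative~(T) from $(R_0^2\rtimes\SL_2(R_0),R_0^2)$ to $(\Gamma,\mathbb{K}^2)$ using amenability of $\mathbb{K}^2/R_0^2$ --- is not a valid general principle: relative property~(T) of $(\Gamma,\Lambda_0)$ does not pass to $(\Gamma,\Lambda)$ for $\Lambda_0\leq\Lambda$ with $\Lambda/\Lambda_0$ amenable (consider $\Gamma=(\mathbb{Z}^2\rtimes\SL_2(\mathbb{Z}))\times\mathbb{Z}$ with $\Lambda_0=\mathbb{Z}^2$ and $\Lambda=\mathbb{Z}^2\times\mathbb{Z}$). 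Whether $(\mathbb{K}^2\rtimes\SL_2(\mathbb{K}),\mathbb{K}^2)$ has relative~(T) for an arbitrary field $\mathbb{K}$ not algebraic over a finite field is a substantive question in its own right, and your route genuinely needs it: if you retreat to the subgroup $R_0^2$, for which relative~(T) is immediate (positive definite functions on $\Gamma$ restrict to the classical pair), Popa only gives atomicity of the pushforward measure on $\widehat{R_0^2}\cong\mathbb{T}^2$, and there your orbit argument fails --- $\SL_2(\mathbb{Z})$ has finite orbits at every torsion point of $\mathbb{T}^2$, and these support fully supported, invariant, purely atomic probability measures, so no contradiction results.

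The paper's proof sidesteps this entirely by distributing the two inputs differently between the two subgroups. Since $\SL_2(\mathbb{K})$ acts transitively on $\mathbb{K}^2\setminus\{0\}$, the $\SL_2(\mathbb{K})$-invariance of $\tau|_{{\rm C}^*(\mathbb{K}^2)}$ alone (no property~(T) needed at this stage) forces $\tau_0=c\,\chi_{\{0\}}+(1-c)\chi_{\mathbb{K}^2}$ as a positive definite function, with $c>0$ by faithfulness; this is exactly the dual statement of your orbit computation, applied to the invariant measure rather than to its atoms. Restricting to $R_0^2$ then yields $c\,\chi_{\{0\}}+(1-c)\chi_{R_0^2}$, whose GNS closure contains the diffuse direct summand $L(R_0^2)$. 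Popa's theorem is then applied only to the pair $\bigl(\pi_\tau({\rm C}_r^*(\Gamma))'',\pi_\tau({\rm C}_r^*(R_0^2))''\bigr)$, whose relative~(T) follows trivially from the classical pair $(R_0^2\rtimes\SL_2(R_0),R_0^2)$. I suggest reorganizing your argument along these lines: use transitivity to pin down the trace first, and reserve relative property~(T) for the small, classical subgroup.
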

\begin{proof}
We divide the proof into two cases:\\
Case 1: $\mathbb{K}$ has characteristic zero.\\
Assume by contradiction ${\rm C}_r^*(\mathbb{K}^2\rtimes\SL_2(\mathbb{K}))$ embeds into a \Cs -algebra contained in the class $\mathcal{H}$.
Then since the Haagerup property passes to the GNS-closure,
and the Haagerup property passes to von Neumann subalgebras \cite{Jol},
we have a faithful tracial state $\tau$ on ${\rm C}_r^*(\mathbb{K}^2\rtimes\SL_2(\mathbb{K}))$
such that the GNS-closure $\pi_\tau({\rm C}_r^*(\mathbb{K}^2\rtimes\SL_2(\mathbb{K})))''$ has the Haagerup property.
We study the tracial state $\tau$.
Consider the restriction $\tau_0$ of $\tau$ to ${\rm C}_r^*(\mathbb{K}^2)$.
Then $\tau_0$ is a $\SL_2(\mathbb{K})$-invariant tracial state on ${\rm C}_r^*(\mathbb{K}^2)$.
Since the action of $\SL_2(\mathbb{K})$ on $\mathbb{K}^2\setminus\{ 0\}$ is transitive,
$\tau_0$ must be of the form
\[\tau_0=c\chi_{\{ 0\}}+(1-c)\chi_{\mathbb{K}^2},\]
where $c\in[0,1]$.
(Here we identify a state on the reduced group \Cs -algebra ${\rm C}_r^*(\Gamma)$
with the restriction of it to the group $\Gamma$, which is a positive definite function of $\Gamma$.)
By faithfulness of $\tau_0$, we further have $c>0$.
From this, the restriction $\tau_1$ of $\tau$ to ${\rm C}_r^*(\mathbb{Z}^2)$ is of the form
\[\tau_1=c\chi_{\{ 0\}}+(1-c)\chi_{\mathbb{Z}^2}\]
with $c>0$.
From this form, the GNS-closure $\pi_\tau({\rm C}_r^*(\mathbb{Z}^2))''$ of ${\rm C}_r^*(\mathbb{Z}^2)$
has the diffuse direct summand $L(\mathbb{Z}^2)$.
On the other hand, since the pair
\[\left(\mathbb{K}^2\rtimes\SL_2(\mathbb{K}), \mathbb{Z}^2\right)\] has relative property~{\rm (T)},
the pair
\[({\rm C}_r^*(\mathbb{K}^2\rtimes\SL_2(\mathbb{K})), {\rm C}_r^*(\mathbb{Z}^2))\] also has relative property~{\rm (T)}.
Then by taking GNS-closures, we further have the pair
\[\left(\left(\pi_\tau({\rm C}_r^*(\mathbb{K}^2\rtimes\SL_2(\mathbb{K})))\right)'', \left(\pi_\tau({\rm C}_r^*(\mathbb{Z}^2))\right)''\right)\]
has relative property~{\rm (T)}.
Then notice that $\left(\pi_\tau({\rm C}_r^*(\mathbb{Z}^2))\right)''$ has a nonzero diffuse direct summand,
whereas $\left(\pi_\tau({\rm C}_r^*(\mathbb{K}^2\rtimes\SL_2(\mathbb{K})))\right)''$ has the Haagerup property.
This contradicts Theorem \ref{Thm:Popa} and Remark \ref{Rem:Popa2}.\\
Case 2: $\mathbb{K}$ has characteristic $p$.\\
This case is also proved by the same method as in Case 1.
Take a transcendental element $\pi$ over the prime field $\mathbb{F}_p$.
Then, notice that the ring $\mathbb{F}_p[\pi]$ is isomorphic to the polynomial ring over $\mathbb{F}_p$.
Therefore the pair
\[\left(\mathbb{K}^2\rtimes\SL_2(\mathbb{K}), \mathbb{F}_p[\pi]^2\right)\]
has relative property~{\rm (T)}.
Now the same proof as in Case 1 works with $\mathbb{F}_p[\pi]$ plays the same role as $\mathbb{Z}$.
\end{proof}
\begin{Rem}
Guentner-Higson-Weinberger \cite{GHW} show the group $\SL_2(\mathbb{K})$ has the Haagerup property for any field $\mathbb{K}$, as a discrete group.
From this, the von Neumann algebra
$L(\mathbb{K}^2\rtimes\SL_2(\mathbb{K}))$ has the relative Haagerup property with respect to the type {\rm I} von Neumann subalgebra $L(\mathbb{K}^2)$ in the sense of Popa \cite{Pop}.
Then Popa's theorem \cite[Theorem 5.4 (2)]{Pop} (and \cite[Proposition 4.7 (2)]{Pop}) shows any von Neumann subalgebra of $L(\mathbb{K}^2\rtimes\SL_2(\mathbb{K}))$ with property~{\rm (T)}
is of type I.
Consequently, we have any \Cs -subalgebra of ${\rm C}_r^*(\mathbb{K}^2\rtimes\SL_2(\mathbb{K}))$ with property~{\rm (T)} is residually finite dimensional.
That is, any property~{\rm (T)} \Cs -subalgebra of ${\rm C}_r^*(\mathbb{K}^2\rtimes\SL_2(\mathbb{K}))$ does not say anything
in our rigidity theorem Theorem \ref{Thm:T}.
However, these \Cs -algebras have a rigidity property relative to the class $\mathcal{H}$.
This in particular shows the class $\mathcal{H}$ is strictly larger than the complement of the class of \Cs -algebras containing a nontrivial property~{\rm (T)} \Cs -subalgebra.
\end{Rem}
\begin{Rem}\label{Rem:Cross}
From Theorem \ref{Thm:Aff},
the class $\mathcal{H}$ is not closed under taking the reduced crossed product by
a group with the Haagerup property
even if the resulting algebra has a faithful tracial state.
(As we have seen in Section \ref{sec:App}, this is not so obvious.)
\end{Rem}
\begin{Rem}
From Theorems \ref{Thm:SL2} and \ref{Thm:Aff},
we obtain the reduced group \Cs -algebra of $\mathbb{Q}^2\rtimes\SL_2(\mathbb{Q})$
cannot embed into that of $\mathbb{Z}^2\rtimes\SL _2(\mathbb{Z})$.
The corresponding result in the context of von Neumann algebras is not known.
\end{Rem}

\section{\texorpdfstring{Appendix : A Short Proof of the Rigidity Theorem for Reduced Group \Cs -algebras}{Appendix : A Short Proof of the Rigidity Theorem for Reduced Group C*-algebras}}
In this section, we give a short proof of the rigidity theorem for the reduced group \Cs -algebras of property~{\rm (T)} groups.
This only uses a group theoretical argument, and does not use any technique of von Neumann algebras.
Since one of main interesting objects of the rigidity theorem is the reduced group \Cs -algebras of the property~{\rm (T)} groups, this short proof is of independent interest.

We first prepare a lemma for non-amenable groups.
Here we introduce the following notation.
For two positive definite functions $\phi, \psi$ on $\Gamma$,
we say $\phi$ is {\it weakly contained} in $\psi$ if $\pi_\phi$ is weakly contained in $\pi_\psi$,
and denote by $\phi\prec\psi$.

\begin{Lem}\label{Lem:name}
Let $\Gamma$ be a non-amenable group.
Let $\phi$ be a positive definite function on $\Gamma$ that is weakly contained in $\delta_e$.
Then there exists a sequence $(g_n)_n$ of $\Gamma$ such that
its canonical image $(\delta_{g_n}^\phi)_n$ in $l^2_\phi(\Gamma)$ converges to $0$ weakly.
\end{Lem}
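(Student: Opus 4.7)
My plan is to argue by contradiction, combining Fell's absorption principle with the elementary fact that a unitary representation of a non-amenable group that is weakly contained in the regular representation has no non-zero (almost) invariant vectors.

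First I would reduce the claim to showing that $0$ lies in the weak closure of the set $S:=\{\delta_g^\phi : g\in\Gamma\}$ inside $l_\phi^2(\Gamma)$. Since each vector in $S$ has norm $\phi(e)^{1/2}=1$, the set is norm-bounded, hence weakly relatively compact in the reflexive space $l_\phi^2(\Gamma)$, so the Eberlein--Smulian theorem will extract a sequence $(g_n)$ with $\delta_{g_n}^\phi\to 0$ weakly once $0$ is placed in this weak closure. Suppose for contradiction that some basic weak neighborhood of $0$ misses $S$. Then there are finitely many vectors $\eta_1,\dots,\eta_k\in l_\phi^2(\Gamma)$ and some $\epsilon>0$ with
\[ \sum_{i=1}^k \lvert\langle \delta_g^\phi,\eta_i\rangle_\phi\rvert^{\,2} \;\geq\; \epsilon^2 \qquad\text{for every } g\in\Gamma. \]
The next move is to re-express the left-hand side as a single matrix coefficient of the tensor product representation $\pi:=\pi_\phi\otimes\overline{\pi_\phi}$ acting on $H:=l_\phi^2(\Gamma)\otimes\overline{l_\phi^2(\Gamma)}$: with $v:=\delta_e^\phi\otimes\overline{\delta_e^\phi}$ and $w:=\sum_{i=1}^k \eta_i\otimes\overline{\eta_i}$, the inequality reads
\[ \langle \pi(g)v,w\rangle_H \;\geq\; \epsilon^2 \qquad (g\in\Gamma). \]

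The crucial input is Fell's absorption principle: $\lambda\otimes\overline{\pi_\phi}$ is unitarily equivalent to a multiple of $\lambda$. Combined with $\pi_\phi\prec\lambda$ this gives $\pi\prec\lambda$, so non-amenability of $\Gamma$ prevents $\pi$ from having any non-zero $\Gamma$-invariant vector. The standard Hilbert-space fact that the unique minimum-norm element of the closed convex hull of a unitary orbit is the orthogonal projection of the starting vector onto the invariant subspace then places $0$ inside the closed convex hull of $\{\pi(g)v:g\in\Gamma\}$. Hence there exist convex combinations $\sum_j c_j^{(n)}\pi(g_j^{(n)})v$ tending to $0$ in norm, and pairing with $w$ gives
\[ \epsilon^2 \;\leq\; \sum_j c_j^{(n)}\langle\pi(g_j^{(n)})v,w\rangle_H \;=\; \Bigl\langle\sum_j c_j^{(n)}\pi(g_j^{(n)})v,\;w\Bigr\rangle_H \;\longrightarrow\; 0, \]
the desired contradiction.

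The main obstacle is precisely the gap between a mean-ergodic statement applied directly to $\pi_\phi$---which only puts $0$ in the closed convex hull of $\{\delta_g^\phi\}$ and not in its weak closure---and the actual conclusion of the lemma. The tensor-product bootstrap with Fell's absorption is the device that closes this gap: it converts the multi-term weak-neighborhood separation $\sum_i\lvert\cdot\rvert^2\geq\epsilon^2$ into a scalar lower bound on a single matrix coefficient of $\pi$, a bound that vanishingly small convex combinations of orbit vectors plainly cannot sustain.
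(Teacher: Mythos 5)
Your proof is correct and follows essentially the same route as the paper's: both use Fell's absorption principle to pass from $\phi$ to $|\phi|^2$ (your tensor square $\pi_\phi\otimes\overline{\pi_\phi}$ is precisely the representation whose diagonal matrix coefficient at the cyclic vector is $|\phi|^2$), and both extract a nonzero invariant vector from the uniqueness of a metric minimizer attached to the $\Gamma$-orbit (the circumcenter in the paper, the minimal-norm point of the closed convex hull in yours), contradicting non-amenability. The only cosmetic difference is that you separate $0$ from the orbit using arbitrary test vectors and invoke Eberlein--\v{S}mulian to extract the sequence, whereas the paper tests only against the total family $\{\delta_f^\phi\}_{f\in\Gamma}$.
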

\begin{proof}
To prove the lemma, it suffices to show the following.
For any finite subset $F$ of $\Gamma$, $\epsilon >0$, 
there is a $g\in\Gamma$ such that
$|\langle \delta _g^\phi, \delta _f^\phi\rangle_\phi |<\epsilon$ for all $f\in F$.
Notice that if $\phi\prec \delta _e$, then we also have $\overline{\phi}\prec\delta _e$.
Then by Fell's absorption theorem \cite[Theorem 2.5.5]{BO}, we also have $|\phi|^2\prec\delta _e$.
Since $\langle \delta _g^{|\phi|^2}, \delta _h^{|\phi|^2}\rangle_{|\phi|^2} =|\langle \delta _g^\phi,\delta _h^\phi\rangle_\phi|^2$ for all $g,h\in\Gamma$,
it suffices to show the lemma for the case $\phi$ takes nonnegative values.
For such $\phi$, it suffices to show the following claim.
For any finite subset $F$ of $\Gamma$ and $\epsilon >0$, there is a $g\in\Gamma$ such that
$\sum_{f\in F}\langle \delta _g^\phi, \delta _f^\phi\rangle_\phi <\epsilon$.
Remark that all summands are nonnegative, so this condition is sufficient.
Assume this is not true. Then for some finite subset $F$ of $\Gamma$, there is a positive number $c$, such that
for all $g\in\Gamma$, we have $\sum_{f\in F}\langle \delta _g^\phi,\delta _f^\phi\rangle_\phi \geq c$.
From this, the canonical image $\Gamma^\phi:=\{\delta_g^\phi\}_{g\in\Gamma}$ of $\Gamma$ in $l^2_\phi(\Gamma)$ is contained in the closed convex subset
$\{\xi\in l^2_\phi(\Gamma)|\sum_{f\in F}\langle \xi,\delta _f^\phi\rangle_\phi\geq c\}$ of $l^2_\phi(\Gamma)$, which obviously does not contain $0$.
From this, the circumcenter $\xi$ of $\Gamma^\phi$ is a nonzero vector.
Since $\Gamma^\phi$ is a $\Gamma$-invariant subset, $\xi$ must be a $\Gamma$-invariant vector.
This means $1_\Gamma\prec\delta_e$, contrary to the non-amenability assumption of $\Gamma$.
\end{proof} 
\begin{Rem}
The converse of Lemma \ref{Lem:name} is also true.
Recall that $\Gamma$ is amenable if and only if $1_\Gamma$ is weakly contained in $\delta_e$.
Hence this gives a characterization of (non-)amenability of discrete groups.
\end{Rem}
Now we prove the rigidity theorem for the reduced group \Cs -algebras of property~{\rm (T)} groups.
\begin{Thm}
Let $\Gamma$ be an infinite group with property~{\rm (T)}, let $A\in\mathcal{H}$.
Then there is no nonzero $\ast$-homomorphism from the reduced group \Cs -algebra ${\rm C}_r^*(\Gamma)$ to $A$.
\end{Thm}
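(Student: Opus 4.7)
The plan is to derive a contradiction with property (T) by extracting from $\pi$ a net of positive definite functions on $\Gamma$ which converges pointwise to $1$, yet whose values along a suitable sequence in $\Gamma$ tend to $0$. Suppose $\pi\colon {\rm C}_r^*(\Gamma)\to A$ is a nonzero $\ast$-homomorphism. By a standard compression argument at the projection $p=\pi(1)\ne 0$---the Haagerup property descends to the corner $pAp$ via the c.c.p.\ maps $x\mapsto p\Phi_i(x)p$---I may assume $\pi$ is unital. Fix a faithful tracial state $\tau$ on $A$ with the Haagerup property and a net $(\Phi_i)_{i}$ of trace-preserving, $L^2$-compact u.c.p.\ maps witnessing it, with $L^2$-extensions $T_i$.

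The central object is the positive definite function $\phi(g):=\tau(\pi(u_g))$ on $\Gamma$. Since $\phi$ extends to the state $\tau\circ\pi$ on ${\rm C}_r^*(\Gamma)$, we have $\phi\prec\delta_e$. The vectors $\xi_g:=\pi(u_g)\in L^2(A,\tau)$ are unit vectors, and GNS-functoriality yields an isometric embedding $V\colon l^2_\phi(\Gamma)\hookrightarrow L^2(A,\tau)$ with $V(\delta_g^\phi)=\xi_g$. For each $i$, I set
\[\phi_i(g):=\tau\bigl(\pi(u_g^*)\,\Phi_i(\pi(u_g))\bigr)=\langle T_i\xi_g,\xi_g\rangle.\]
The main technical point is that each $\phi_i$ is positive definite: I would realize it as the matrix coefficient of the conjugation representation $V_g\colon\eta\mapsto\pi(u_g)\,\eta\,\pi(u_g^*)$ of $\Gamma$ on the Jolissaint--Popa Hilbert $A$-$A$ bimodule $H_{\Phi_i}$ attached to $\Phi_i$ (with inner product $\langle a\otimes b,c\otimes d\rangle=\tau(d^*\Phi_i(c^*a)b)$), evaluated at the vector $1\otimes 1$; unitarity of $V_g$ is what uses the trace property of $\tau$. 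By Cauchy--Schwarz together with $\|\Phi_i(\pi(u_g))-\pi(u_g)\|_\tau\to 0$, $\phi_i\to 1$ pointwise, so after renormalizing by $\phi_i(e)\to 1$, property (T) of $\Gamma$ forces this convergence to be uniform.

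To reach the contradiction, I invoke Lemma \ref{Lem:name}: infinite property (T) groups are non-amenable, and $\phi\prec\delta_e$, so there is a sequence $(g_n)_n\subset\Gamma$ with $\delta_{g_n}^\phi\to 0$ weakly in $l^2_\phi(\Gamma)$. Transporting through the isometry $V$, $\xi_{g_n}\to 0$ weakly in $L^2(A,\tau)$; the compactness of $T_i$ then gives $\|T_i\xi_{g_n}\|_\tau\to 0$, whence $\phi_i(g_n)=\langle T_i\xi_{g_n},\xi_{g_n}\rangle\to 0$ as $n\to\infty$ for every fixed $i$. On the other hand, for $i$ sufficiently large the normalized $\phi_i$ stays uniformly within $1/2$ of $1$ on all of $\Gamma$, which is incompatible with $\phi_i(g_n)\to 0$. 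The principal delicate step is the positive-definiteness of the $\phi_i$, handled via the Hilbert bimodule associated to $\Phi_i$; everything else is a direct transport of Lemma \ref{Lem:name} along the GNS embedding, crucially exploiting the $L^2$-compactness supplied by the Haagerup property.
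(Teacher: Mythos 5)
Your argument is correct and is essentially the paper's own proof: the same positive definite functions $\phi_i(g)=\tau(\pi(u_g)^*\Phi_i(\pi(u_g)))$, the same use of property~(T) to upgrade pointwise to uniform convergence, and the same appeal to Lemma \ref{Lem:name} combined with $L^2$-compactness to force $\phi_i(g_n)\to 0$ along a weakly null sequence. The extra details you supply (the isometry $V$, and positive definiteness via the Hilbert bimodule of $\Phi_i$) are correct fillings-in of steps the paper leaves implicit.
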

\begin{proof}
Proceeding by contradiction, assume we have a nonzero $\ast$-homomorphism
$\pi\colon{\rm C}_r^*(\Gamma)\rightarrow A$ with $A\in\mathcal{H}$.
Replacing $A$ by $A_{\pi(1)}$, we may assume $\pi$ is unital.
Let $\tau$ be a faithful tracial state on $A$ with the Haagerup property.
Take an approximation net $(\Phi_n)_n$ of the Haagerup property for $(A,\tau)$.
For each $n$, define a positive definite function $\phi_n$ on $\Gamma$ by
\[\phi_n(g):=\tau(\pi(g)^\ast\Phi_n(\pi(g)))=\langle\Phi_n(\pi(g))\delta_e^\tau, \delta_g^\tau\rangle_\tau,\]
here we simply denote $\tau\circ\pi$ by $\tau$ for notational convenience.
Then $\phi_n$ converges to $1$ pointwise.
Since $\Gamma$ has property~{\rm (T)}, this convergence is indeed uniform.
On the other hand, by Lemma \ref{Lem:name} there is a sequence $(g_k)_k$ of $\Gamma$
whose canonical image in $l^2_\tau(\Gamma)$ converges to $0$ weakly.
This with the $L^2$-compactness of $\Phi_n$'s implies for each $n$,
$\phi_n(g_k)$ converges to zero as $k$ tends to infinity.
This contradicts to the uniform convergence of $(\phi_n)_n$.
\end{proof}

\section{Further Questions}
Here we state and comment on some open questions which arise naturally from our investigation.
\begin{Ques}
Can we recover the information whether the discrete group $\Gamma$ has the Haagerup property or not
from the reduced group \Cs -algebra ${\rm C}_r^*(\Gamma)$?
\end{Ques}
For some particular groups, this question is obviously ``Yes''.
For example, consider two extreme cases. If $\Gamma$ is amenable or has property~{\rm (T)},
then thanks to the result of Lance \cite[Theorem 4.2]{Lan}, Bekka \cite[Theorem 7]{Bek}, respectively, it is true.
It is also true by the result of Dong if the reduced group \Cs -algebra ${\rm C}_r^*(\Gamma)$ has the unique tracial state.
It is known that many important discrete groups satisfy the unique trace condition.
For example, outer automorphism groups of (non-commutative) free groups, torsion-free non-elementary hyperbolic groups, irreducible Coxeter groups, and mapping class groups with trivial center satisfy the unique trace condition.
(For the detail, see \cite{Har2} and references therein.) 
But in general, it seems hard to recover the information about the Haagerup property from the reduced group \Cs -algebra.
Our results Example \ref{Exm:SLn}, Theorems \ref{Thm:trd} and \ref{Thm:SL2} suggest ${\rm C}_r^*(\Gamma)$ may be contained in the class $\mathcal{H}$,
even if the group $\Gamma$ does not have the Haagerup property.

The next question is about a permanence property of the Haagerup property.
\begin{Ques}\label{Ques:sub}
Does the Haagerup property pass to a \Cs -subalgebra?
That is, let $(A,\tau)$ be a pair of a \Cs -algebra and a faithful tracial state on $A$ with the Haagerup property, $B$ be a \Cs -subalgebra
of $A$. Then does the pair $(B,\tau|_B)$ have the Haagerup property?
\end{Ques}
Note first that this is true if $A$ is nuclear.
See Corollary \ref{Cor:nsb}.
Note also that if this is true, then Theorem \ref{Thm:nuc} immediately follows.
Since our proof of Theorem \ref{Thm:nuc} is already complicated,
if this is true, then a proof would be perhaps hard.

Note also Question \ref{Ques:sub} has a positive answer in the context of the von Neumann algebras \cite[Theorem 2.3 (i),(ii)]{Jol}.
The reason we can prove this for the von Neumann algebras is that
we can always construct a trace-preserving conditional expectation \cite[Lemma 1.5.11]{BO}.
But in the context of the \Cs -algebras, we cannot construct a conditional expectation in general,
even if we do not consider the condition about the trace.
For example, let $A$ be a nuclear \Cs -algebra, $B$ be a \Cs -subalgebra of $A$ which is not nuclear.
Then there is no conditional expectation from $A$ onto $B$, because any range of a conditional expectation on a nuclear \Cs -algebra is nuclear.
Note that by Blackadar's theorem \cite[Theorem 1]{Bla}, in the separable case, such a \Cs -subalgebra exists if and only if $A$ is not of type {\rm I}.
The condition that $A$ is of type {\rm I} is quite strong. For example, any (infinite dimensional) UHF-algebra is not of type {\rm I}.

\noindent{\bf Acknowledgements.}
The author would like to thank Professor Yasuyuki Kawahigashi, who is his supervisor,
for his encouragement and advice.
He also would like to thank Professor Narutaka Ozawa
for his valuable comments and suggestions.
He is grateful to thank the referee for his or her careful reading and valuable suggestions.
He is supported by Research Fellow of the Japan Society for the Promotion of
Science and Leading Graduate Course for Frontiers of Mathematical Sciences and Physics.
 
\end{document}